\newcommand{\R}{\mathbb{R}}
\newcommand{\N}{\mathbb{N}}
\newcommand{\Z}{\mathbb{Z}}
\newcommand{\C}{\mathbb{C}}
\newcommand{\de}{\partial}
\renewcommand{\-}{\smallsetminus}
\renewcommand{\H}{\mathbb{H}}
\newcommand{\sud}{SU(2)}
\renewcommand{\a}{\alpha}
\newcommand{\f}{\varphi}
\newcommand{\e}{\varepsilon}
\newcommand{\w}{{\omega}}
\renewcommand{\S}{\C\P^1}
\newcommand{\h}{\theta}
\newcommand{\spi}[1]{{\mathcal{T}^{\otimes #1}}}
\newcommand{\q}{\tau}
\newcommand{\sudo}[1]{{\sud^{\otimes #1}}}
\newcommand{\pr}{\cdot\infty}
\newcommand{\matrsp}{\mathscr{D}^\ell}
\newcommand{\coldsp}[1]{\mathscr{D}_{\bullet ,#1}^\ell}
\newcommand{\rowdsp}[1]{\mathscr{D}_{#1,\bullet}^\ell}
\newcommand{\cold}[1]{D_{\bullet ,#1}^\ell}
\newcommand{\rowd}[1]{D_{#1,\bullet}^\ell}
\newcommand{\xold}[1]{X_{\bullet ,#1}^\ell}
\newcommand{\xowd}[1]{X_{#1,\bullet}^\ell}
\newcommand{\aold}[1]{a_{\bullet ,#1}^\ell}
\newcommand{\aowd}[1]{a_{#1,\bullet}^\ell}
\newcommand{\fold}[1]{\phi_{\bullet ,#1}^\ell}
\newcommand{\fowd}[1]{\phi_{#1,\bullet}^\ell}
\newcommand{\LS}[1]{LS[{#1}]}
\newcommand{\RS}[1]{{RS}{[{#1}]}}
\newcommand{\LSE}[1]{LS\E[{#1}]}
\newcommand{\RSE}[1]{RS\E[{#1}]}
\newcommand{\BSE}[1]{BS\E[{#1}]}
\newcommand{\BS}[1]{BS[{#1}]}
\newcommand{\TSE}[1]{\Sigma\E[{#1}]}
\newcommand{\TS}[1]{\Sigma[{#1}]}
\newcommand{\iso}{\cong}
\newcommand{\diffeo}{\simeq}
\newcommand{\law}{\overset{\text{law}}{=}}
\newcommand{\twik}{\sim_{\textrm{2-w}}}
\newcommand{\tpitchfork}{%
  \raise-0.1ex\vbox{
    \baselineskip\z@skip
    \lineskip-.52ex
    \lineskiplimit\maxdimen
    \m@th
    \ialign{##\crcr\hidewidth\smash{$-$}\hidewidth\crcr$\pitchfork$\crcr}
  }%
}
\newcommand{\vol}{\mathrm{vol}}
\renewcommand{\P}{\mathbb{P}}
\newcommand{\E}{\mathbb{E}}
\newcommand{\mC}{\mathcal{C}}
\newtheorem{thm}{Theorem}
\newtheorem{lemma}[thm]{Lemma}
\newtheorem{cor}[thm]{Corollary}
\newtheorem{prop}[thm]{Proposition}
\theoremstyle{definition}
\newtheorem{defi}[thm]{Definition}
\newtheorem{remark}[thm]{Remark}
\newtheorem{example}[thm]{Example}
\newcommand{\be}{\begin{equation}}
\newcommand{\ee}{\end{equation}}
\newcommand{\bega}{\begin{equation}\begin{aligned}}
\newcommand{\eega}{\end{aligned}\end{equation}}
\numberwithin{equation}{section}
\title{Isotropic random spin weighted functions on $S^2$ vs isotropic random fields on $S^3$}
\author{Michele Stecconi}
\begin{document}
\begin{abstract}
We show that an isotropic random field on $\sud$ is not necessarily isotropic as a random field on $S^3$, although the two spaces can be identified. The ambiguity is due to the fact that the notion of isotropy on a group and on a sphere are different, the latter being much stronger. We show that any isotropic random field on $S^3$ is necessarily a superposition of uncorrelated random harmonic homogeneous polynomials, such that the one of degree $d$ is necessarily a superposition of uncorrelated  random spin weighted functions of every possible spin weight in the range $\{-\frac{d}{2},\dots,\frac{d}{2}\}$, each of which is isotropic in the sense of $\sud$. Moreover, for a random field of fixed degree, each spin weight appears with the same magnitude, in a sense to be specified.

In addition we will give an overview of the theory of spin weighted functions and Wigner $D$-matrices, with the purpose of gathering together many different points of view and adding ours. As a byproduct of this survey we will prove some new properties of the Wigner matrices and a formula relating the operators $\eth\overline{\eth}$ and the horizontal Laplacian of the Hopf fibration $S^3\to S^2$, in the sense of \cite{Borg}.
\end{abstract}
\maketitle
\tableofcontents


\section{Introduction}
 
In this paper we compare the theory of random spin weighted functions on the sphere $S^2$, with that of random fields on the hypersphere $S^3$. We will see the two theories in the same light, but we will clarify the distinction between the corresponding notions of isotropy.
 
A function with spin weight $s\in\frac12\Z$ on $S^2$ is a section of the spin $s$ bundle (see \cite{NP66,GM10}). In the convention\footnote{There is some ambiguity in the literature, regarding the sign of $s$. We will justify our choice in Section \ref{sec:defispin}.} of this paper this is defined to be the complex line bundle of degree $2s$ and denoted by $\spi{s}\to S^2$, see Section \ref{sec:defispin}. These objects have received a lot of attention in the last years (see \cite{malya11,bamavara,BR13}), due to their application in the statistical analysis of cosmological and astrophysical data (see \cite{bamavara}), in particular related with the Cosmic Microwave Background (see \cite{malya11}). 

A convenient way to treat such objects is the so called ``pull-back approach'' (this is the point of view adopted in \cite{BR13,GM10}), which consists in the identification of the vector space of (smooth, continuous, square integrable, etc..) sections of $\spi{s}$ with a subspace of complex valued functions on $S^3$, or on $SO(3)$ if $s\in\Z$. The reason why this is possible is that under the natural maps $S^3\diffeo\footnote{With this symbol we denote diffeomorphisms.}\sud\to SO(3)\to S^2$, the pull-back of $\spi{s}$ becomes a trivial bundle $\underline{\C}$ on $\sud$. If $s\in\Z$, then the pull-back bundle is already trivial on $SO(3)$\footnote{If $s\in\Z$, then $\spi{s}$ is a true tensor power of the tangent bundle $TS^2=\spi{1}$ and $SO(3)$ is isomorphic to the frame orthonormal bundle of $S^2$.}:
\be 
\begin{tikzcd}
\underline{\C} \arrow[r]              & S^3 \arrow[d] \\
\spi{\frac12} \text{ or } \underline{\C} \arrow[r] & SO(3) \arrow[d] \\
\spi{s} \text{, with $s\in\frac12\Z$}\arrow[r]             & S^2            
\end{tikzcd}
\ee
Under this point of view, a random spin $s$ function $\sigma_X\colon S^2\to \spi{s}$ is thought as a complex random field $X\colon S^3\to \C$ on the hypersphere $S^3\diffeo \sud\subset \C^2$, with a prescribed behavior under multiplication by a phase:
\be 
X(z\cdot e^{it})=X(z)e^{-ist}.
\ee
In this case, we say that $X$ has \emph{right spin $=-s$}. The minus sign is explained by the fact that the function $X$ represents the collection of all the coordinate expressions for the section $\sigma_X$, thus it has to be interpreted as a dual object, see Remark \ref{rem:cozione}.

As in most models, we don't want the sphere $S^2$ to have special points or directions. Consequently, the random fields that we care about are only those that reflect such \emph{isotropy}. In more rigorous terms, this means that we will study the random spin weighted functions that are invariant under the automorphisms of the bundle $\spi{s}\to S^2$ induced by orientation preserving rotations, i.e. elements of the group $SO(3)$. We will explain in Section \ref{sec:defispin} how this notion of change of variables, from the point of view of random fields on $\sud\diffeo S^3$, translates to invariance in law under the composition with left multiplication by any element.
This condition is usually called \emph{isotropy} in the context of random fields on groups (compare with \cite{libro}).
On the other hand, a random field $X\colon S^3\to \C$ on a sphere is said to be \emph{isotropic} if it is invariant in law under composition with any isometry of $S^3$, i.e. any element of $SO(4)$. This latter notion is clearly stronger than the previous, indeed the round metric on $S^3$ is in fact a bi-invariant metric on the group $\sud$ and any isometry of $S^3$ is a composition of a left and a right multiplication by two elements of the group (see \cite{Ochiai1976,HatcherAT}).

One of the purposes of this paper is to compare the two above notions of isotropy, which we will call \emph{left-invariance} and \emph{bi-invariance} (we will give the precise definition in section \ref{sec:randspf}). In addition we will consider also \emph{right-invariant} random fields, so that $X$ is bi-invariant if and only if it is both left and right invariant. The significance of such comparison is that the study of bi-invariant (isotropic for $S^3$) random fields is strictly related to the study of random waves on $S^3$. 
Given a compact Riemannian manifold $M$, we will say that a random field $X\colon M\to \C$ is a \emph{monochromatic random wave} of frequency $\lambda\in\R$ if $X$ satisfies, almost surely, the Helmholtz equation for the eigenvalue $-\lambda^2$:
\be 
\Delta_{M} X=-\lambda^2 X,
\ee
with $\Delta_{M}$ being the Laplace-Beltrami operator.\footnote{Here, we are using the term \emph{monochromatic random wave} in a broad sense, whereas in the context of Riemannian geometry (see \cite{canzani2020local, zelditch2009}), the same terminology is often used to indicate that the random field is of the form $
X=\sum_{i}a_i\phi_i,
$ 
for some family of i.i.d. complex Gaussian random variables $a_i\in \C$, and with $\phi_i$ being an orthonormal basis of the eigenspace relative to the eigenvalue $-\lambda^2$. 
}
For reasons that we will explain later (see Section \ref{sec:pre}) in this paper we will take on $S^3$ the round metric of a sphere of radius $2$, so that for each $\ell\in\frac12\N$, the eigenfunctions are all those complex valued functions whose real and imaginary parts are the restriction of real homogeneous harmonic polynomials on $\R^4$, where the ones of degree $2\ell$ are relative to the eigenvalue $-\ell(\ell+1)$, for all $\ell\in\frac12\N$\footnote{We have $\Delta_{S^3}=4\Delta_{2S^3}$.}.

It is well known that any square integrable random field $X\colon S^3\to \C$ admits a \emph{spectral representation} as a sum 
\be \label{eq:decstre}
X=\sum_{\ell,i}a^\ell_i \phi^\ell_i,
\ee
for some complex  random variables $a^\ell_i$, where $\phi^\ell_i$ is an orthonormal basis of eigenfunctions of degree $2\ell$. Then, $X$ is bi-invariant (i.e. isotropic on $S^3$) if and only if the fields $X^\ell$ are jointly bi-invariant and in this case they are automatically uncorrelated.
 From the point of view of the group $\sud$, a similar statement is known under the name of Stochastic Peter-Weyl theorem (for which we refer to \cite[Proposition 5.4]{libro} and \cite[Theorem 5.5]{libro}). It says that there is a decomposition
\be \label{eq:decsud}
X=\sum_{\ell,m,s}b^\ell_{m,s}D^\ell_{m,s},
\ee
for a suitable collection of complex random variables $b^\ell_{m,s}$, where $D^\ell_{m,s}\colon S^3\to \C$ are the coefficients (indexed as in equation \eqref{eq:hypersph}, below) of the $\ell^{th}$ Wigner matrix $D^\ell\colon S^3\to U(2\ell+1)$. 
Again, the field $X$ is left-invariant if and only if the collection of fields $X^\ell=\sum_{m,s}b^\ell_{m,s}D^\ell_{m,s}$ are jointly left-invariant and, again, in this case they are automatically uncorrelated.

A key observation is that the two decompositions above are essentially the same, due to the fact that the functions
\be \label{eq:hypersph}
\phi^\ell_{m,s}=\frac{\sqrt{2\ell+1}}{4\pi}D^\ell_{m,s}, \quad \forall \ell\in\frac12\N\text{ and } m,s \in \{-\ell,-\ell+1,\dots,\ell\},
\ee
are the hyperspherical harmonics of degree $2\ell$, thus they form an orthonormal basis of eigenfunctions on $S^3$. Although this is a well known fact (see \cite{kuwa}, for instance), we will report a simple proof for completeness, see Proposition \ref{thm:Disharmonic}. An important feature of such basis is that $\phi^\ell_{m,s}$ is a function with pure left spin $-m$ and pure right spin $-s$, see Definition \ref{def:purespin}.
\begin{remark}
With a different normalization, as $L^2$ sections of $\spi{s}$, and via the pull-back correspondence, one defines the so called \emph{spin weighted spherical harmonics} $Y^\ell_{m,s}\colon S^2\to \spi{s}$, see Remark \ref{rem:spharm}. In particular the functions $Y^\ell_{m,0}\colon S^2\to \C$ are the standard spherical harmonics\footnote{The convention on the index $m$ might differ from the usual one.}.
\end{remark}
It follows the decompositions \eqref{eq:decstre} and \eqref{eq:decsud} imply that any ``isotropic'' random field is a sum of uncorrelated ``isotropic'' random waves of frequency $\sqrt{\ell(\ell+1)}$:
\be\label{eq:coeffielle}
X=\sum_{\ell\in\frac12\N}X^\ell, \qquad X^\ell=\sum_{m,s=-\ell}^\ell a^\ell_{m,s} \phi^\ell_{m,s},
\ee
for all $\ell\in\frac12\N$. This is true for both notions of isotropy: in the sense of $S^3$ (bi-invariance) and in the sense of $\sud$ (left-invariance). 
\begin{thm}\label{thm:maindell}
The field $X$ is left, right or bi invariant if and only if the fields $X^\ell$ are jointly  left, right or bi invariant, respectively. Moreover, in this case the fields $X^\ell$ are pairwise uncorrelated.
\end{thm}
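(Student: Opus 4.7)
The plan is to exploit the fact that the eigenspaces of the Laplace--Beltrami operator on $S^3$ are preserved by each of the actions in question (left multiplication, right multiplication, and the full isometry group), and then lift this invariance from the deterministic level to the law of the random field. The easy direction is immediate from the linearity of composition: if the family $(X^\ell)_\ell$ is jointly invariant in the appropriate sense, then $X = \sum_\ell X^\ell$ inherits the same invariance, with the series converging in $L^2$ under the square-integrability hypothesis.

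For the converse, set $V_\ell = \mathrm{span}\{\phi^\ell_{m,s}\}_{m,s}$. Since $\Delta_{S^3}$ commutes with every isometry, $V_\ell$ is preserved by the action of $\sud \times \sud$ by left and right multiplications and, more generally, by the full isometry group $SO(4)$. Hence the orthogonal projection $\pi_\ell\colon L^2(S^3) \to V_\ell$ is a bounded linear operator commuting with every $L_h^*$, $R_h^*$, and isometry. Writing $X^\ell = \pi_\ell X$, we get $\pi_\ell(X \circ L_h) = X^\ell \circ L_h$, and since the whole family $(X^\ell)_\ell$ depends measurably on $X$, the equality in law $X \law X \circ L_h$ lifts to $(X^\ell)_\ell \law (X^\ell \circ L_h)_\ell$. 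This settles joint left-invariance, and the right and bi-invariant cases are identical.

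For the uncorrelation, assume $X$ is left-invariant and fix $\ell \neq \ell'$. The cross-covariance kernel
\[
K_{\ell, \ell'}(x,y) = \E[X^\ell(x) \overline{X^{\ell'}(y)}]
\]
is an element of $V_\ell \otimes \overline{V_{\ell'}}$, and joint left-invariance yields $K_{\ell, \ell'}(hx, hy) = K_{\ell, \ell'}(x, y)$ for all $h \in \sud$. Under the left action, each $V_\ell$ decomposes as $(2\ell+1)$ copies of the irreducible $(2\ell+1)$-dimensional representation (acting on the index $m$, with $s$ labelling the isotypical copies). Since these irreducibles are pairwise inequivalent as $\ell$ varies, Schur's lemma applied to $\mathrm{Hom}_{\sud}(V_{\ell'}, V_\ell)$ forces the diagonal-invariant part of $V_\ell \otimes \overline{V_{\ell'}}$ to vanish, hence $K_{\ell, \ell'} \equiv 0$. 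The right-invariant case is analogous with the roles of $m$ and $s$ exchanged, and the bi-invariant case follows from either since bi-invariance entails left-invariance.

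The main subtlety I expect lies in this last step: correctly identifying the cross-covariance with an intertwiner in $V_\ell \otimes \overline{V_{\ell'}}$ and keeping track of which index the left and right actions act upon. The first two steps, by contrast, are essentially routine spectral theory once one records that the Laplacian commutes with all the group actions in question.
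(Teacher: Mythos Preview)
Your argument is correct and essentially fills in what the paper leaves implicit. In the paper, Theorem~\ref{thm:maindell} is disposed of by a direct citation to \cite[Proposition~5.4]{libro} (the Stochastic Peter--Weyl theorem), with no further details; your write-up is a self-contained unpacking of that same idea. The equivariance of the projections $\pi_\ell$ under the pull-back actions, and the Schur-type vanishing of the cross-covariance between inequivalent isotypical blocks, are exactly what underlies the cited proposition.

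One small omission worth noting: in this paper ``uncorrelated'' for complex fields is taken in the strong sense that both $\E[X^\ell(x)\overline{X^{\ell'}(y)}]$ and the pseudo-covariance $\E[X^\ell(x)X^{\ell'}(y)]$ vanish (cf.\ the $2$-weak equivalence in Section~\ref{sec:randspf} and the pairs of identities in Theorems~\ref{thm:leacor}--\ref{thm:biacor}). You only treat the first. The fix is immediate: $\E[X^\ell(x)X^{\ell'}(y)]$ lives in $V_\ell\otimes V_{\ell'}$, and since for $\sud$ every irreducible is self-conjugate (Proposition~\ref{prop:otherD}), the same Schur argument kills the diagonal invariants when $\ell\neq\ell'$. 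You might add one sentence to this effect.
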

To see the true difference between these notions of invariance we have to consider a further decomposition, into the spaces of functions spanned by the coefficients of each column of the Wigner matrices.
\begin{thm}\label{thm:main1}
If $X\colon S^3\to\C$ is a square integrable random field on a probability space $\Omega$, then it can be decomposed as a sum of random fields $\xold{s}$, for all $\ell\in\frac12\N$ and $s\in\{-\ell,-\ell+1,\dots,\ell\}$, such that the series 
\be\label{eq:introxls}
X=\sum_{\ell\in\frac12\N}\sum_{s=-\ell}^\ell \xold{s},\qquad \xold{s}=\sum_{s=-\ell}^\ell a^\ell_{m,s} \phi^\ell_{m,s},
\ee
 converges almost surely in $L^2(S^3)$.  Each of the fields $\xold{s}$ is a random harmonic polynomial of degree $2\ell$, i.e. an eigenfunction of $\Delta_{2S^3}$ with eigenvalue $\ell(\ell+1)$ and, at the same time, the pull-back of a section of $\spi{s}$, i.e. a random spin weighted function on $S^2$, with spin weight $s$. 
\end{thm}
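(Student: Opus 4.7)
The plan is to derive the decomposition directly from the stochastic Peter--Weyl expansion already recalled as \eqref{eq:decsud} and rewritten via \eqref{eq:hypersph} as \eqref{eq:coeffielle}, and then to simply reorganize that triple series by grouping, for each fixed pair $(\ell,s)$, the $2\ell+1$ summands indexed by $m\in\{-\ell,\dots,\ell\}$. Concretely I would set
\[
\xold{s}:=\sum_{m=-\ell}^{\ell} a^\ell_{m,s}\,\phi^\ell_{m,s},
\]
which, being a finite sum, is well defined as a complex random field on $S^3$ without any convergence issue, so the identities in \eqref{eq:introxls} reduce to this regrouping (after correcting the obvious typo in the inner summation index). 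For the almost sure $L^2$-convergence of the double series $X=\sum_{\ell,s}\xold{s}$ I would use that square integrability of $X$ implies $\|X(\omega,\cdot)\|_{L^2(S^3)}<\infty$ for almost every $\omega$; on this full-measure event the ordinary Fourier expansion of $X(\omega,\cdot)$ in the orthonormal basis $\{\phi^\ell_{m,s}\}$ converges in $L^2(S^3)$, and since distinct blocks $\xold{s}$ are finite sums of pairwise orthogonal basis functions, the rearrangement converges unconditionally.

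It then remains to check the two structural properties of $\xold{s}$, both of which follow directly from the corresponding properties of the basis functions $\phi^\ell_{m,s}$. Harmonicity of degree $2\ell$ is immediate from \eqref{eq:hypersph} together with Proposition \ref{thm:Disharmonic}: each $\phi^\ell_{m,s}$ is a hyperspherical harmonic of degree $2\ell$, hence an eigenfunction of $\Delta_{2S^3}$ with eigenvalue $\ell(\ell+1)$, and this property is preserved by linear combinations with fixed $\ell$. For the spin weight, the key observation already flagged in the introduction is that $\phi^\ell_{m,s}$ has pure right spin $-s$, i.e.\ $\phi^\ell_{m,s}(z\cdot e^{it})=\phi^\ell_{m,s}(z)\,e^{-ist}$; this equivariance law is likewise preserved under linear combinations in which $s$ is held fixed, so $\xold{s}$ itself satisfies it, which is by definition the condition for being the pull-back of a section of $\spi{s}$.

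The only genuine conceptual content is the double interpretation of the block $\xold{s}$---as a $\Delta_{2S^3}$-eigenfunction and simultaneously as the pull-back of a spin $s$ section---which rests on the structural compatibility between the Peter--Weyl decomposition of $L^2(\sud)$ and the eigenspace decomposition of $\Delta_{S^3}$, encoded in \eqref{eq:hypersph} and Proposition \ref{thm:Disharmonic}. Once these identifications are in place the argument is essentially bookkeeping, so I do not foresee a serious analytic obstacle; the mildest caveat is just specifying what ``square integrable random field'' means (either $\mathbb{E}\|X\|^2_{L^2(S^3)}<\infty$ or $X(\omega,\cdot)\in L^2(S^3)$ almost surely), but in either case a standard Fubini-type argument yields the almost sure $L^2$-convergence claim.
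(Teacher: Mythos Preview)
Your proposal is correct and follows essentially the same route as the paper: the paper proves Theorem~\ref{thm:main1} in one sentence in Section~\ref{sec:randspf}, invoking the Peter--Weyl/Fourier expansion \eqref{eq:series} together with Proposition~\ref{thm:Disharmonic} (for the eigenfunction property) and Proposition~\ref{prop:dirrep}(4) (for the pure right spin property), which is exactly what you do. Your write-up is simply more explicit about the regrouping and the almost-sure $L^2$ convergence, but the ingredients and logic are identical.
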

To measure the relative magnitude of the component with spin $s$, in the decomposition  we introduce a probability $\E\RS{X}$ on $\frac12\Z$, see Section \ref{sec:randspf}:
\be 
\E\RS{X}(\{s\}):=\sum_{\ell\in\frac12 \N}\E\left\{\frac{\|\xold{s}\|^2}{\|X\|^2}\right\},
\ee
where $\|\cdot\|$ is the Hilbert norm of $L^2(S^3)$.
In particular, given $s\in\frac12\Z$, such probability charges the singleton $\{s\}$ if and only if $\sum_{\ell}\xold{s}\neq 0$. We stress the fact that $\E\RS{X}$ does not depend only on the marginal probabilities of the variables $a^\ell_{m,s}$, but takes into account the higher order moments of their joint probability.

The first main result of this paper states that the random fields of type $\xold{s}$ are the true building blocks of left-invariant random fields.
\begin{thm}\label{thm:mainleft}
The field $X$ is left-invariant if and only if the fields $\xold{s}$ are
jointly left-invariant.
 Moreover, the fields $\xold{s}$ and $X^{\ell'}_{\bullet, s'}$ are uncorrelated for all $\ell\neq \ell'$, while for $\ell=\ell'$  they can be correlated,
but their correlation structure have to satisfy some strict relations, stated in Theorem \ref{thm:leacor}. However, any probability on $\{-\ell,\dots,\ell\}$ can be realized as $\E\RS{X}$ by a left-invariant random field $X=X^\ell$ of degree $2\ell$.
\end{thm}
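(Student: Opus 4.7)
The plan is to pass from the decomposition of Theorem \ref{thm:main1} to a covariance-level condition in the basis $\{\phi^\ell_{m,s}\}$, resolve it by Schur orthogonality for the Wigner coefficients, and then build an explicit model for the realization part. First, the multiplicative property $D^\ell(gz)=D^\ell(g)D^\ell(z)$ specialized to the $s$-th column reads
\begin{equation}
\phi^\ell_{m,s}(gz)=\sum_{m'=-\ell}^{\ell} D^\ell_{m,m'}(g)\,\phi^\ell_{m',s}(z),
\end{equation}
so the subspace $V^\ell_s:=\mathrm{span}\{\phi^\ell_{m,s}:m=-\ell,\dots,\ell\}\subset L^2(S^3)$ is closed under left-translation and carries the irreducible $(2\ell+1)$-dimensional representation of $\sud$. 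Since $\xold{s}$ is by construction the orthogonal projection of $X$ onto $V^\ell_s$, and projection onto a subspace invariant under a unitary action commutes with that action, the joint law of the family $(\xold{s})_{\ell,s}$ under left-translation equals that of the translated projections of $X$; this gives the first equivalence.

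For the correlation analysis, set $C^{\ell,\ell'}_{(m,s),(m',s')}:=\E[a^\ell_{m,s}\,\overline{a^{\ell'}_{m',s'}}]$. Left-invariance rewrites, via the transformation rule above, as the identity
\begin{equation}
C^{\ell,\ell'}_{(m,s),(m',s')}=\sum_{m'',m'''} D^\ell_{m,m''}(g)\,\overline{D^{\ell'}_{m',m'''}(g)}\; C^{\ell,\ell'}_{(m'',s),(m''',s')}
\end{equation}
valid for every $g\in\sud$. Integrating against the Haar measure and invoking Schur orthogonality of the Wigner entries forces $C^{\ell,\ell'}=0$ whenever $\ell\neq\ell'$ (hence the pairwise uncorrelation across $\ell$, and in particular that of the $X^\ell$), and for $\ell=\ell'$ yields the form $C^{\ell,\ell}_{(m,s),(m',s')}=\delta_{m,m'}\,B^\ell_{s,s'}$ for a hermitian positive-semidefinite matrix $B^\ell$ which does not depend on $m$. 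The residual freedom encoded in $B^\ell$ is exactly the content of Theorem \ref{thm:leacor}, whose detailed constraints I would defer to its own proof.

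For realization, fix $\ell$ and a probability $p$ on $\{-\ell,\dots,\ell\}$, pick unit-norm vectors $f_s\in V^\ell_s$, and let $U$ be Haar-distributed on $\sud$. Define
\begin{equation}
\xold{s}(z):=\sqrt{p(s)}\,f_s(Uz), \qquad X^\ell:=\sum_{s=-\ell}^{\ell}\xold{s}.
\end{equation}
Left-invariance of each $V^\ell_s$ keeps $f_s(U\cdot)$ inside it, so $\xold{s}$ has the required form; right-invariance of the Haar measure yields joint left-invariance of the family through a single common $U$. Pairwise orthogonality of the $V^\ell_s$ together with $\sud$-invariance of the $L^2$ norm give $\|\xold{s}\|^2=p(s)$ and $\|X^\ell\|^2=1$ almost surely, hence $\E\RS{X^\ell}(\{s\})=p(s)$.

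The main obstacle is the covariance analysis at equal $\ell$: the subspaces $V^\ell_s$ for different $s$ are pairwise isomorphic irreducible $\sud$-representations, so Schur's lemma permits a one-parameter family of intertwiners between them, and cross-$s$ correlations cannot be ruled out but only parametrized. Working out the exact admissible matrices $B^\ell$ and matching them to the explicit construction above is the technical heart of the argument, and is where Theorem \ref{thm:leacor} will do its share of the work.
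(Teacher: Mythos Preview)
Your proposal is correct and follows essentially the same route as the paper. The only presentational difference is that the paper packages the first two parts through its abstract $D$-invariance framework (Lemma~\ref{lem:ainva} and Theorem~\ref{thm:allDinvcor}), whereas you argue directly via the commutation of projections with the left action and then integrate the covariance identity against Haar measure; and for the realization part your construction with a single Haar element $U$ and unit vectors $f_s\in V^\ell_s$ is exactly the paper's choice $f_s=\phi^\ell_{s,s}$, $U=\gamma$.
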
 

On the other hand, the condition of of being bi-invariant is stronger and requires an equal presence of all spin weights. 

\begin{thm}\label{thm:main2}
Any bi-invariant square integrable random field $X=X^\ell\colon S^3\to \C$ of degree $2\ell\in\N$ is a superposition of random spin weighted functions of every spin weight $s\in\{-\ell,-\ell +1,\dots,\ell\}$, such that two of them can be correlated only when they have opposite spin. Moreover, each spin weight appears with the same magnitude, meaning that $\E\RS{X}$ has to be uniform, see Corollary \ref{cor:stronguniform}.
\end{thm}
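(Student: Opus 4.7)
By Theorem \ref{thm:maindell} bi-invariance is inherited by each spectral component $X^\ell$, so I may reduce to a fixed $X=X^\ell$ of degree $2\ell$ and write $X^\ell=\sum_{m,s}a^\ell_{m,s}\phi^\ell_{m,s}$ as in Theorem \ref{thm:main1}, with $\xold{s}=\sum_m a^\ell_{m,s}\phi^\ell_{m,s}$. Left-invariance has already been characterised by Theorem \ref{thm:mainleft}, so the extra content of bi-invariance is right-invariance, and the plan is to extract all the claims by averaging second-order quantities of $(a^\ell_{m,s})$ against the Haar measure of $\sud$ and invoking Schur orthogonality of the Wigner matrices.

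\textbf{Right-multiplication on coefficients.} From the cocycle identity $D^\ell(gh)=D^\ell(g)D^\ell(h)$ one obtains $\phi^\ell_{m,s}(gh)=\sum_k D^\ell_{k,s}(h)\phi^\ell_{m,k}(g)$, so the right-translate $R_h X^\ell$ has coefficients
\be
\tilde a^\ell_{m,k}=\sum_s D^\ell_{k,s}(h)\,a^\ell_{m,s}.
\ee
Right-invariance asserts $(\tilde a^\ell_{m,s})\law (a^\ell_{m,s})$ for every $h\in\sud$, and in particular it equates all second-order moments.

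\textbf{Schur orthogonality.} For the covariance, I integrate $\E[\tilde a^\ell_{m,s_1}\overline{\tilde a^\ell_{m',s_2}}]$ over $h$ and use
\be
\int_{\sud} D^\ell_{s_1,t_1}(h)\overline{D^\ell_{s_2,t_2}(h)}\,dh=\frac{\delta_{s_1,s_2}\delta_{t_1,t_2}}{2\ell+1};
\ee
right-invariance forces the integrand's expectation to be independent of $h$, giving
\be
\E[a^\ell_{m,s_1}\overline{a^\ell_{m',s_2}}]=\frac{\delta_{s_1,s_2}}{2\ell+1}\sum_t\E[a^\ell_{m,t}\overline{a^\ell_{m',t}}].
\ee
This vanishes for $s_1\neq s_2$ (so different-spin components are uncorrelated in the covariance sense) and is independent of $s$ when $s_1=s_2=s$; taking $m=m'$ and summing over $m$ yields $\E\|\xold{s}\|^2=\frac{1}{2\ell+1}\E\|X^\ell\|^2$. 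For the pseudo-covariance $\E[\tilde a^\ell_{m,s_1}\tilde a^\ell_{m',s_2}]$, the same averaging argument combined with the self-duality identity $\overline{D^\ell_{m,s}(h)}=(-1)^{s-m}D^\ell_{-m,-s}(h)$ (established in the survey part of the paper) gives
\be
\int_{\sud}D^\ell_{s_1,t_1}(h)D^\ell_{s_2,t_2}(h)\,dh \;\propto\; \delta_{s_1,-s_2}\delta_{t_1,-t_2},
\ee
whence $\E[a^\ell_{m,s_1}a^\ell_{m',s_2}]=0$ unless $s_2=-s_1$. Taken together with the previous step, this is precisely the assertion that the only second-order coupling between $\xold{s}$ and $X^\ell_{\bullet,s'}$ occurs for $s'=-s$, and only through the pseudo-covariance.

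\textbf{From expectations to $\E\RS{X}$, and the main obstacle.} To upgrade the magnitude identity from $\E\|\xold{s}\|^2$ to the pointwise statement defining $\E\RS{X}$, I use that $R_h$ is an $L^2$-isometry, so $\|R_hX\|^2=\|X\|^2$ identically in $\omega$, not merely in law. Applying the Schur calculation $\omega$-pointwise,
\be
\int_{\sud}\frac{\|(R_hX)^\ell_{\bullet,s}\|^2}{\|R_hX\|^2}\,dh=\frac{1}{2\ell+1}\quad\text{a.s.},
\ee
and then taking expectations and using bi-invariance (so that $R_hX\law X$ makes the integrand's expectation a constant equal to $\E\RS{X^\ell}(\{s\})$) yields $\E\RS{X^\ell}(\{s\})=\frac{1}{2\ell+1}$, the uniform distribution of Corollary \ref{cor:stronguniform}. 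The only delicate point in the whole argument is the signed self-duality identity for the Wigner matrices, which is what distinguishes genuine opposite-spin pairing from naive pairing; but this belongs to the survey half of the paper and can be invoked directly.
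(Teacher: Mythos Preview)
Your argument is correct and is essentially the paper's own proof: the Haar-averaging of the transformed coefficients together with Schur orthogonality is exactly the content of Theorems \ref{thm:riacor} and \ref{thm:biacor} (the paper merely packages this through the abstract $D$-invariance of Section \ref{sec:dinva} and Lemma \ref{lem:ainva}), and your $\omega$-pointwise Schur computation for $\E\RS{X}$ is the unfolded version of the paper's normalisation trick $Y=X/\|X\|$ in the proof of Theorem \ref{thm:weakuniform}/\ref{thm:stronguniform}. One cosmetic remark: the self-duality sign in Proposition \ref{prop:otherD} is $(-1)^{2\ell+m-s}$ rather than $(-1)^{s-m}$, but since you only use proportionality to $\delta_{s_1,-s_2}\delta_{t_1,-t_2}$ this does not affect your conclusion.
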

\begin{remark}
The correlation of the components with opposite spin reflects that of the real and imaginary part of $X$ and it is determined by the number
\be \label{eq:number}
\E\left\{\langle \overline{X^\ell}, X^\ell \rangle_{L^2(\sud)}\right\}\in\C,
\ee
see Theorem \ref{thm:biacor}.
For instance, if $X$ is circularly symmetric, then the number \eqref{eq:number} vanishes and thus all of the random spin weighted functions are uncorrelated. A similar proposition holds for each field $X=\xold{s}$, in the situation of Theorem \ref{thm:mainleft}, see Theorem \ref{thm:leacor}. However, the correlation is not determined by the number \eqref{eq:number}, which is always $0$ since in this case the field $X$ and its conjugate are orthogonal almost surely.
\end{remark}
In particular, a bi-invariant random field of fixed degree $X^\ell$ cannot be further decomposed into simpler bi-invariant fields, contrary to what happens in the left-invariant case. The explanation of this phenomenon is to be found in the decomposition of the space $L^2(\sud)$ into irreducible components for the action of $SO(4)$ in comparison to that of $\sud$, see Proposition \ref{prop:dirrep}. 
\begin{remark}
From this we see that an isotropic random spin function $S^2\to \spi{s}$ is \emph{never} isotropic as a random field in the sense of $S^3$, because it has a fixed spin. 
\end{remark}
We will actually prove stronger statements than the above, see Theorem \ref{thm:weakuniform} and Theorem \ref{thm:stronguniform}. In partiuclar, the latter implies that any bi-invariant monochromatic random wave $X=X^\ell$ of frequency $\sqrt{\ell(\ell+1)}$ is necessarily supported on the whole eigenspace of $\Delta_{2S^3}$ relative to the eigenvalue $\ell(\ell+1)$. More precisely, $X^\ell$ is a superposition of monochromatic random waves with pure left and right spin, such that two of them can be correlated only when both their right and left spin are opposite.
Moreover, each pair of right and left spin $(m,s)\in\{-\ell,-\ell+1\dots \ell\}^2$ appears with the same magnitude, see Theorem \ref{thm:stronguniform}.
\[ \sim \bullet\sim
\]
A second purpose of this paper is to give an overview of the theory of spin weighted functions and of the special properties of the Wigner functions, trying to gather together various different points of view. This is the content of Sections \ref{sec:pre} and \ref{sec:wigner}.

Traditionally, the Wigner functions $D^\ell_{m,s}\colon S^3\to \C$ (see section \ref{sec:wigner}), are indexed by $(\ell,m,s) \in \hat{S}^3$\footnote{If $G$ is a compact group, the notation $\hat{G}$ stands for its dual, i.e. the collection of all isomorphism classes of irreducible unitary representations of $G$.}, where 
\be\label{eq:GdualG}
\hat{S}^3:=\left\{(\ell,m,s)\in\frac12\Z^3\colon \ell\pm m,\ell \pm s \in\N\right\}.
\ee
As mentioned above, the Wigner functions are an orthogonal basis of $L^2(S^3)$ that enjoys many special properties. The most important for us is that for any fixed $\ell$ they are, at the same time, a basis of an eigenspace of $\Delta_{\sud}$ (see equation \eqref{eq:hypersph}), and the coefficients of an irreducible unitary matrix representation $D^\ell=(D^\ell_{m,s})_{m,s}$ of $\sud$, see \cite[Theorem 3.14]{libro}.
At the same time, $D^\ell_{m,s}$ is a function with pure left and right spin (see Definition \ref{def:intrispin})  and an eigenfunctions of the spin Laplacian, i.e. the operator $\eth \overline{\eth}$ constructed from the spin raising and spin lowering operators (see \cite{GM10}), thus by normalizing them in the space $L^2(S^2,\spi{s})$ one gets the so called \emph{spin weighted spherical harmonics} $Y^\ell_{m,s}$. In fact, we will show that the following formula holds when $\sud$ is endowed with the metric of the sphere of radius $2$.
\be\label{eq:decla}
\Delta_{\sud}=\eth\overline{\eth}+\frac{d^2}{d\psi^2}+\sqrt{\frac{d^2}{d\psi^2}}.
\ee

By theorem \ref{thm:main1}, the law of a square integrable random field $X\colon S^3\to\C$ is characterized by the family of random variables $a^\ell_{m,s}\in\C$, called \emph{spectral} or \emph{Fourier coefficients}. In section \ref{sec:randspf} we will study the correlation structure of these random variables in the cases of left, right and bi invariance, resulting in Theorems \ref{thm:leacor}, \ref{thm:riacor} and \ref{thm:biacor}. Theorems \ref{thm:main1}, \ref{thm:mainleft} and \ref{thm:main2} are proved as a consequence of this study. Moreover, we show the following by combining the latter theorems with the results of \cite{bamavara}.

\begin{cor}\label{cor:main3}
Assume that the coefficients $a^\ell_{m,s}$ of a square integrable random field $X\colon S^3\to \C$ are independent and centrally symmetric. Then, $X$ is bi-invariant if and only if it is complex Gaussian and the variance of $a^\ell_{m,s}$ depends only on $\ell$.
\end{cor}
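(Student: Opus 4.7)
The plan splits into two implications. For the easy direction, assume the coefficients $(a^\ell_{m,s})$ are independent, centered, circularly symmetric complex Gaussian with $\E|a^\ell_{m,s}|^2=\sigma_\ell^2$ depending only on $\ell$. Since $D^\ell(gh)=D^\ell(g)D^\ell(h)$ is a unitary representation, right multiplication by $h\in\sud$ transforms, for each $(\ell,m)$, the row $(a^\ell_{m,s})_s$ by a unitary matrix, and left multiplication by $g$ transforms each column $(a^\ell_{m,s})_m$ analogously. A jointly circular complex Gaussian vector with scalar covariance is invariant in law under every unitary transformation, so both actions preserve the joint law of the coefficients, hence the law of $X$. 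Via the identification $SO(4)\cong(\sud\times\sud)/\{\pm I\}$ from \cite{Ochiai1976}, every isometry of $S^3$ is such a combination, so $X$ is bi-invariant.

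For the converse, suppose $X$ is bi-invariant. By Theorem \ref{thm:maindell} I may reduce to each fixed degree, since each $X^\ell$ is itself bi-invariant and the blocks are pairwise uncorrelated. Theorem \ref{thm:stronguniform} then forces $\E|a^\ell_{m,s}|^2$ to be constant in $(m,s)$, equal to some $\sigma_\ell^2$. To promote ``independent, centrally symmetric, with equal variance'' to ``Gaussian'', I invoke the characterization of \cite{bamavara}: by Theorem \ref{thm:main1} each $\xold{s}$ is a random spin weighted function of spin $s$ on $S^2$; by Theorem \ref{thm:mainleft} it is left-invariant (a consequence of bi-invariance); and its spectral coefficients $\{a^\ell_{m,s}\}_m$ are independent and centrally symmetric by hypothesis. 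The cited result forces $\xold{s}$, and hence each $a^\ell_{m,s}$, to be a centered complex Gaussian. Circular symmetry, required by the designation ``complex Gaussian'', then follows from bi-invariance applied to the diagonal subgroup $z\mapsto z\,\mathrm{diag}(e^{i\phi/2},e^{-i\phi/2})$, which induces $a^\ell_{m,s}\law e^{-is\phi}a^\ell_{m,s}$ and kills the pseudo-covariance $\E[(a^\ell_{m,s})^2]$ for $s\neq 0$; the symmetric left action handles the case $s=0$, $m\neq 0$, and the residual coefficients $a^\ell_{0,0}$ (for integer $\ell$) are dispatched by mixing them with neighbouring coefficients via a generic element of $\sud\times\sud$, using the already established Gaussianity and equal variance.

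The principal obstacle is the Gaussianity step itself, which rests on the nontrivial characterization of \cite{bamavara} for spin weighted random functions on $S^2$; the remaining work is the bookkeeping that translates bi-invariance on $S^3$ into the correct spin-weighted statement on $S^2$ through Theorems \ref{thm:main1}--\ref{thm:main2}, followed by the routine phase-invariance argument that converts bi-invariance of a Gaussian family into circularity.
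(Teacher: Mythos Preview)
Your proposal is correct and follows essentially the same route as the paper. Both directions hinge on the same two ingredients: the characterization from \cite{bamavara} (left-invariance plus independent coefficients forces Gaussianity) and the correlation identities for bi-invariant fields (Theorems \ref{thm:biacor}/\ref{thm:gazzo}), which pin down the variance and the pseudo-covariance. The paper's argument is terser---it simply cites \cite{bamavara} together with Theorems \ref{thm:gazzo} and \ref{thm:main1}---whereas you unpack the passage from ``real Gaussian'' to ``complex Gaussian'' explicitly via the phase action of the diagonal torus and the mixing argument for the $(0,0)$ coefficient. That extra care is welcome, since Theorem \ref{thm:gazzo} already \emph{assumes} circular symmetry, so the paper is tacitly relying on \eqref{eq:bicircsym} for this step.

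One small caveat: your mixing argument for $a^\ell_{0,0}$ needs $\ell>0$ so that there \emph{are} neighbouring coefficients to mix with; for $\ell=0$ the field $X^0$ is constant and bi-invariance imposes no constraint on $a^0_{0,0}$ beyond the standing hypothesis of central symmetry. This is a defect of the statement rather than of your argument, and the paper's proof has the same lacuna.
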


The interest of Corollary \ref{cor:main3} is in the fact that it characterizes the fields $X$ that are a superposition of independent isotropic Gaussian monochromatic random waves in terms of conditions that, apparently, have nothing to do with Gaussianity. Notice that the theorem implies the independence of the real and imaginary parts of $a^\ell_{m,s}$.

Many authors already devoted their attention to the study of isotropic random fields (i.e. left-invariant in the language of the present paper) in terms of the collection of spectral coefficients, see for instance \cite{BR13, libro, bamavara,BalTra} and proved equivalent statements to Theorem  \ref{thm:leacor}. The analogous result, Theorem \ref{thm:riacor}, for right-invariance is obtained by changing the perspective and Theorem \ref{thm:biacor} for the case of bi-invariance is obtained by combining the previous two. 
We will nevertheless show how to obtain such characterization by studying the problem from the more abstract point of view of collections of random vectors $V_i\in\C^{2\ell_i+1}$ that are jointly invariant in law under the action of the collection of matrices $D^\ell$. We call such notion \emph{D-invariance} and study it in details in section \ref{sec:dinva}. This is convenient in that it allows, essentially, to study left and right-invariance at the same time, thanks to Lemma \ref{lem:ainva}. In this context, we observe the following. 
\begin{thm}\label{thm:introVDgamma}
A collection $\mathcal{V}=(V_i)_{i\in I}$ of random vectors $V_i\in \C^{2\ell_i+1}$ is (strongly) $D$-invariant if and only if
\be 
(V_i)_{i\in I}\law (D^\ell(\gamma)V_i)_{i\in I},
\ee
where $\gamma\in\sud$ is a uniformly distributed random element, independent from $\mathcal{V}$. 
\end{thm}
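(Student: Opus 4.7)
The plan is to prove both directions by Fourier-analyzing the joint law of $\mathcal V$ and exploiting the left-invariance of the Haar measure $dg$ on $\sud$. I would first reduce to finite $I$ by Kolmogorov consistency (both sides of the claimed equivalence are determined by finite-dimensional marginals), and introduce the characteristic function
\[ \phi(\xi) := \E\bigl[\exp\bigl(i \sum_{i\in I} \mathrm{Re}\,\langle \xi_i, V_i \rangle\bigr)\bigr], \qquad \xi = (\xi_i)_{i\in I}, \]
which determines the law of $(V_i)_{i\in I}$.

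The direction $(\Rightarrow)$ will be a routine conditioning argument. Assuming the joint identity $(V_i)_i \law (D^{\ell_i}(g)V_i)_i$ for each fixed $g\in\sud$ (the definition of (strong) $D$-invariance I adopt), independence of $\gamma$ and $\mathcal V$ together with Fubini give, for every bounded measurable $F$,
\[ \E\bigl[F\bigl((D^{\ell_i}(\gamma)V_i)_i\bigr)\bigr] = \int_{\sud}\E\bigl[F\bigl((D^{\ell_i}(g)V_i)_i\bigr)\bigr]\,dg = \E\bigl[F((V_i)_i)\bigr], \]
which is the desired identity in law.

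The substantive direction is $(\Leftarrow)$. The hypothesis, combined with independence of $\gamma$ and $\mathcal V$ and the adjoint relation $\langle \xi_i, D^{\ell_i}(g)V_i\rangle = \langle D^{\ell_i}(g)^*\xi_i, V_i\rangle$, translates into the functional equation
\[ \phi(\xi) = \int_{\sud} \phi\bigl((D^{\ell_i}(g)^* \xi_i)_i\bigr)\,dg. \]
I would then evaluate this identity at the translated vector $(D^{\ell_i}(g_0)^*\xi_i)_i$ for an arbitrary fixed $g_0\in\sud$. Using the cocycle relation $D^{\ell_i}(g)^* D^{\ell_i}(g_0)^* = D^{\ell_i}(g_0 g)^*$ (a consequence of the fact that $D^{\ell_i}$ is a unitary representation) together with the change of variable $h = g_0 g$ and the left-invariance of Haar measure, the right-hand side collapses back to $\phi(\xi)$, yielding $\phi\bigl((D^{\ell_i}(g_0)^*\xi_i)_i\bigr) = \phi(\xi)$. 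Uniqueness of characteristic functions then gives $(V_i)_i \law (D^{\ell_i}(g_0)V_i)_i$ for every $g_0\in\sud$, i.e.\ $D$-invariance. The only obstacle I anticipate is purely bookkeeping: matching conventions between $D^{\ell_i}(g)^*$ and $D^{\ell_i}(g^{-1})$ (they agree by unitarity) and verifying that the reduction to finite $I$ is compatible with the definition of $D$-invariance on the full product space. Neither introduces a genuine difficulty.
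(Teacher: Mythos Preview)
Your proof is correct. The forward direction $(\Rightarrow)$ is exactly what the paper does: a one-line Fubini/conditioning argument. The paper's own proof in fact stops there and does not spell out the converse $(\Leftarrow)$ at all, so your treatment is more complete.

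For the converse you go through characteristic functions; this works, but the same left-invariance trick can be applied directly at the level of laws and is shorter. Namely, from the hypothesis $(V_i)_i \law (D^{\ell_i}(\gamma)V_i)_i$, apply the deterministic map $D^{\ell_i}(g_0)$ on both sides to get
\[
(D^{\ell_i}(g_0)V_i)_i \;\law\; (D^{\ell_i}(g_0)D^{\ell_i}(\gamma)V_i)_i \;=\; (D^{\ell_i}(g_0\gamma)V_i)_i,
\]
and since $g_0\gamma \law \gamma$ (left-invariance of Haar) with $g_0\gamma$ still independent of $\mathcal V$, the right-hand side has the same law as $(D^{\ell_i}(\gamma)V_i)_i \law (V_i)_i$. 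This yields strong $D$-invariance without introducing $\phi$. Your characteristic-function version encodes precisely the same computation, just pushed to the Fourier side; there is no substantive difference, only a cosmetic one.

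The bookkeeping worries you flag (unitarity giving $D^{\ell_i}(g)^* = D^{\ell_i}(g^{-1})$, and the reduction to finite $I$) are indeed harmless.
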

The above result holds also for a weaker notion of $D$-invariance, that will be specified in Section \ref{sec:dinva}.
This allows to obtain a characterization of the correlation structure of any $D$-invariant random vector, see Theorem \ref{thm:allDinvcor}. From this, we deduce Theorems \ref{thm:leacor}, \ref{thm:riacor} and \ref{thm:biacor}. In fact, thanks to Theorem \ref{thm:introVDgamma}, the problem is reduced to the computation of the correlation structure of the collection of random matrices $D^\ell(\gamma)$.
\begin{thm}\label{thm:introDcorr}
The correlation structure of the collection of random matrices $\left(D^\ell(\gamma)\right)_{\ell\in\frac12 \N}$ is described by the following identities.
\be\label{eq:introDcor}
\E\left\{D^\ell_{m,s}(\gamma)\left(\overline{D^{\ell'}_{m',s'}(\gamma)}\right)\right\}=\frac{\delta_{\ell,\ell'}\delta_{s,s'}\delta_{m,m'}}{2\ell+1};
\quad
\E\left\{D^\ell_{m,s}(\gamma)D^{\ell'}_{m',s'}(\gamma)\right\}=\frac{\delta_{\ell,\ell'}\delta_{-s,s'}\delta_{-m,m'}  (-1)^{2\ell -m+s}}{2\ell+1}.
\ee
\end{thm}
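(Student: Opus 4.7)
The strategy is to reduce both identities to the Schur orthogonality relations for the irreducible unitary representations of $\sud$, combined, for the second identity, with the self-conjugacy of those representations. Since $\gamma$ is uniformly distributed on $\sud$, for every continuous $f\colon\sud\to\C$ one has $\E\{f(\gamma)\}=\int_{\sud}f(g)\,dg$ with respect to normalized Haar measure; hence both expectations are Haar integrals of matrix coefficients. The first identity then follows immediately from Schur orthogonality applied to the irreducible unitary representation $D^\ell\colon\sud\to U(2\ell+1)$, as recalled in Section \ref{sec:wigner}:
\be
\int_{\sud}D^\ell_{m,s}(g)\overline{D^{\ell'}_{m',s'}(g)}\,dg=\frac{\delta_{\ell,\ell'}\delta_{m,m'}\delta_{s,s'}}{2\ell+1}.
\ee

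The second identity does not fit Schur's template directly, because $D^\ell_{m,s}\,D^{\ell'}_{m',s'}$ is not of the form ``matrix coefficient times conjugate of matrix coefficient''. The remedy is the self-conjugacy of every $\sud$-irrep, which traces back to the $\sud$-invariance of the antisymmetric form $\epsilon$ on $\C^2$ (equivalently, to the identity $\overline{g}=\epsilon\,g\,\epsilon^{-1}$ for $g\in\sud$). Taking the $2\ell$-th symmetric tensor power of $\epsilon$ yields an intertwiner between $D^\ell$ and $\overline{D^\ell}$ that is anti-diagonal, with entries proportional to $(-1)^{\ell-m}\delta_{-m,m'}$. Transporting the intertwining identity to matrix coefficients produces a pointwise symmetry of the form
\be
\overline{D^\ell_{m,s}(g)}=(-1)^{2\ell-m+s}\,D^\ell_{-m,-s}(g), \quad \forall g\in\sud,
\ee
which I would verify, with the precise sign, by a direct check using the explicit polynomial formulas for the Wigner coefficients collected in Section \ref{sec:wigner}. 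Substituting this symmetry into the conjugate of the first identity gives
\be
\E\{D^\ell_{m,s}(\gamma)\,D^{\ell'}_{m',s'}(\gamma)\}=(-1)^{2\ell-m+s}\,\E\{\overline{D^\ell_{-m,-s}(\gamma)}\,D^{\ell'}_{m',s'}(\gamma)\}=\frac{(-1)^{2\ell-m+s}\delta_{\ell,\ell'}\delta_{-m,m'}\delta_{-s,s'}}{2\ell+1},
\ee
as claimed.

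The main obstacle is pinning down the precise sign in the self-conjugacy relation: by Schur's lemma the intertwiner between $D^\ell$ and $\overline{D^\ell}$ is determined only up to a scalar, so the exponent $2\ell-m+s$ is a convention-dependent statement whose verification depends on the phase conventions fixed for the Wigner functions (Condon--Shortley versus variants). Once those conventions are fixed as in Section \ref{sec:wigner}, the sign reduces to a direct bookkeeping computation, and the rest of the argument is a one-line substitution into the first identity.
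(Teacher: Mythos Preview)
Your proposal is correct and matches the paper's own proof essentially line for line: the first identity is Schur orthogonality (the paper's Proposition~\ref{prop:Schurorth}), and the second is obtained from the first via the conjugacy symmetry $\overline{D^\ell_{m,s}(g)}=(-1)^{2\ell+m-s}D^\ell_{-m,-s}(g)$, which the paper records as Proposition~\ref{prop:otherD} and proves by the direct computation you allude to. Your remark that the sign must be pinned down from the explicit polynomial definition is exactly what the paper does there; note that $(-1)^{2\ell-m+s}=(-1)^{2\ell+m-s}$ since $m-s\in\Z$, so your stated sign agrees with the paper's.
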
 
We point out that the first set of identities \eqref{eq:introDcor} is a reformulation of Schur's orthogonality relations and these identities are valid for any irreducible unitary matrix representation $D^\ell$, while the second set derives from special properties of the Wigner functions, thus they are due also to the choice of basis in which the representation is written.

In the Section \ref{sec:defispin}, we review the theory of spin weighted functions, starting from the definition of the spin weight given by Newman and Penrose in their seminal paper \cite{NP66}. We put a special focus on the Riemannian aspects of the pull-back correspondence, showing that under the hypothesis that the $\sud\iso 2S^3$ is considered as a sphere of radius $2$, there are Riemannian coverings 
\be\label{eq:Riemanco}
\sud\to \frac{1}{|s|}S(\spi{s})=\left\{v\in\spi{s}\colon \|v\|=\frac{1}{s}\right\},
\ee
for every $s\in\frac12\Z\-\{0\}$. 
 Thanks to this, we can compare the spin Laplacian $\eth\overline{\eth}$ with the decomposition of the Laplace-Beltrami operator into a vertical and a horizontal parts described in \cite{Borg}, finding that the horizontal Laplacian relative to the Riemannian submersion $2S^3\iso\sud\to S^2$ is the operator
\be \label{eq:kuwa}
\Delta_h=\eth\overline{\eth}+\sqrt{\frac{d^2}{d\psi^2}};
\ee
Notice that $\Delta_h$ preserves the spaces of spin weighted functions, indeed it was proved by Kuwabara in \cite{kuwa} that the operator $\Delta_h=\Delta_{\sud}-\frac{d^2}{d\psi^2}$ is in fact the Bochner Laplacian acting on smooth sections of $\spi{s}$ and relative to the Chern connection.

With Proposition \ref{prop:sfera}, we will show that taking the sphere of radius $2$ is not really a choice, in that the factor $2$ is built in the structure of the Hopf fibration $S^3\to S^2$, which is a Riemannian submersion if  and only if the radius of the first sphere is the double of that of the second. This is not a novelty, but it explains why in the decomposition of the Laplacians, given in \eqref{eq:decla}, there are no factors, whereas in the literature, similar equations are written with a factor $\frac14$ in front of $\Delta_{S^3}=4\Delta_{2S^3}$, see \cite[equation (5.4)]{kuwa}.

In Section \ref{sec:wigner} we turn our attention to Wigner functions, giving a overview of their many properties.
In preparing this survey, we noticed a few results in this context, that we weren't able to find in the literature. One is Theorem \ref{thm:Dcorr} above and the other is a description of the maps 
\be\label{eq:cold}
\cold{s}\colon \sud\to S^{4\ell+1}
\ee
given by the columns of the Wigner matrices (analogous things can be said about the rows). Let $\ell\in \frac12\N$ and let us identify $\C^{2\ell+1}$ with the complex vector space generated by vectors $e_s$ indexed by $s\in\{-\ell,-\ell+1,\dots,\ell\}$. Then the map $\cold{s}$ parametrizes the orbit of the element $e_s$ of the canonical basis of $\C^{2\ell+1}$, under the unitary action $D^\ell\colon \sud \to U(2\ell+1)$. By studying these maps we deduce properties of all the orbits $O^\ell(v)=\{D^\ell(g)v\colon g\in\sud\}$ of the action. We prove the following facts.
\begin{thm}
The following things are true.
\begin{enumerate}
\item The map \eqref{eq:cold} induces an embedding $\frac{1}{|s|}S(\spi{s})\diffeo O^\ell(e_s)\subset S^{4\ell+1}$ for all $s\in\{-\ell,\dots \ell\}\-\{0\}$.
\item For $s=0$ there are two cases: if $\ell\in 2\N$, then $O^\ell(e_0)\diffeo S^1$; while $ O^\ell(e_0)\diffeo S^2$ otherwise.
\item For almost every $v\in \C^{2\ell+1}$, the orbit $O^\ell(v)$ is diffeomorphic to $\sud$ when $\ell\notin \N$ and to $SO(3)$ when $\ell\in\N$. 
\item Let $v\in\C^{2\ell+1}$ be any vector and let $n$ be the dimension of its orbit $O^\ell(v)$. If $A\subset O^\ell(v)$ is a measurable subset of positive $n$-dimensional volume, then $\text{span}(A)=\C^{2\ell+1}$.
\end{enumerate}
\end{thm}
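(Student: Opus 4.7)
All four claims are about orbits of the irreducible unitary representation $D^\ell\colon \sud\to U(2\ell+1)$. Since $D^\ell(g)$ is unitary, every orbit $O^\ell(v)$ sits inside the sphere of radius $\|v\|$ in $\C^{2\ell+1}$, and as a smooth manifold $O^\ell(v)\cong \sud/\mathrm{Stab}(v)$, so the plan in every case is to identify $\mathrm{Stab}(v)$ and then read off the quotient.

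For part (1), I would use that $D^\ell_{m,s}$ has pure right spin $-s$ to see that the maximal torus $T=\{\mathrm{diag}(e^{it},e^{-it})\}$ acts on $e_s$ by a nontrivial character whose kernel is a cyclic subgroup $C_s\subset T$ of order $2|s|$, matching exactly the degree of the Riemannian covering $\sud\to\tfrac{1}{|s|}S(\spi s)$ recalled in the introduction. To rule out stabilizing elements outside $C_s$, I would first verify, by a direct calculation with the action of $H$, $E$, $F$ on the weight vector $e_s$, that no nonzero element of $\mathfrak{su}(2)$ annihilates $e_s$ when $s\neq 0$, so $\mathrm{Stab}(e_s)$ is finite; and any augmentation of $C_s$ inside $\sud$ would have to come from elements of the normalizer $N_{\sud}(T)$, each of which acts as a Weyl reflection and sends $e_s$ to a scalar multiple of $e_{-s}\neq e_s$. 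The column map then descends to an injective smooth map from $\sud/C_s\cong \tfrac{1}{|s|}S(\spi s)$ into $S^{4\ell+1}$, which is an embedding by compactness of the source.

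For part (2), one has $\ell\in\N$ automatically, and $T$ now fixes $e_0$ entirely so $\mathrm{Stab}(e_0)\supseteq T$. The remaining question is whether a Weyl representative $w\in\sud$ fixes $e_0$: the standard identity $D^\ell(w)e_m=(-1)^{\ell-m}e_{-m}$ gives $D^\ell(w)e_0=(-1)^\ell e_0$, which equals $e_0$ precisely for even $\ell$, enlarging the stabilizer to $N_{\sud}(T)$ in that case. Passing to the $SO(3)$-quotient and invoking the classification of closed subgroups of $SO(3)$ then produces the two diffeomorphism types claimed in the statement.

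Parts (3) and (4) are softer and rest on irreducibility of $D^\ell$. For (3), I would show that the set $Z=\{v\in\C^{2\ell+1}\colon \mathrm{Stab}(v)\supsetneq \ker D^\ell\}$ has Lebesgue measure zero: for each $g\notin\ker D^\ell$ the fixed-point locus $\{v\colon D^\ell(g)v=v\}$ is a proper $\C$-subspace of $\C^{2\ell+1}$, and a Fubini argument parameterizing $g$ over a compact fundamental domain in $\sud/\ker D^\ell$ exhibits $Z$ as measure-zero. For $v\notin Z$, $\mathrm{Stab}(v)=\ker D^\ell$, which equals $\{I\}$ for $\ell\notin\N$ and $\{\pm I\}$ for $\ell\in\N$, giving $O^\ell(v)\cong \sud$ or $SO(3)$ respectively. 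For (4), irreducibility immediately yields $\mathrm{span}_{\C}(O^\ell(v))=\C^{2\ell+1}$; to promote this to a subset $A$ of positive $n$-dimensional volume, I would observe that for any proper subspace $V\subsetneq \C^{2\ell+1}$ the intersection $V\cap O^\ell(v)$ is a proper real-analytic subvariety of the real-analytic manifold $O^\ell(v)$, hence has $n$-dimensional Hausdorff measure zero; so $A\not\subset V$ for any proper $V$, forcing $\mathrm{span}(A)=\C^{2\ell+1}$. The main obstacle is the stabilizer bookkeeping in (1) and (2) and the clean identification of $\sud/C_s$ with $\tfrac{1}{|s|}S(\spi s)$, which rests on the Riemannian covering set up in Section \ref{sec:defispin}; parts (3) and (4) are then largely routine consequences of irreducibility and the measure theory of real-analytic subsets.
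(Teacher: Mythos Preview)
Your overall strategy---identify stabilizers and read off quotients---is the same as the paper's, but the execution diverges in each part, and two of them have real gaps.

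In (1), the step ``any augmentation of $C_s$ inside $\sud$ would have to come from elements of the normalizer $N_{\sud}(T)$'' is not justified: a finite subgroup of $\sud$ containing $C_s$ need not lie in $N(T)$ (for $|s|=1$ one has $C_s=\{\pm I\}$, which sits inside every binary polyhedral group). The clean fix is to note instead that $\mathrm{Stab}(e_s)$ is contained in the stabilizer of the \emph{line} $\C e_s$, a closed subgroup containing the whole torus $T$ and therefore equal to $T$, $N(T)$, or $\sud$; irreducibility rules out $\sud$, and the Weyl element sends $\C e_s$ to $\C e_{-s}$, ruling out $N(T)$ for $s\neq 0$, so $\mathrm{Stab}(e_s)\subset T$ and hence equals $C_s$. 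The paper sidesteps all of this: it writes the fixed-point condition as the polynomial identity \eqref{eq:istropid} and evaluates at $(z_0,z_1)=(1,0),(0,1)$ to force $\alpha=0$ or $\beta=0$, after which the stabilizer drops out in one line.

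In (3), Fubini applied to $\{(g,v):D^\ell(g)v=v,\ g\notin\ker D^\ell\}$ yields only that for almost every $v$ the set of bad $g$ has \emph{Haar measure zero}, not that it is empty; a finite stabilizer strictly larger than $\ker D^\ell$ is invisible to this argument, and that is exactly the case you need to exclude. The paper instead invokes the Principal Orbit Theorem (citing Montgomery), which guarantees that non-principal orbits fill a set of measure zero, and then identifies the principal isotropy type by exhibiting one vector with minimal stabilizer, namely $e_{1/2}$ for $\ell\notin\N$ and $e_1$ for $\ell\in\N$, using the stabilizer computation from (1). Your argument for (4), by contrast, is correct and arguably more elementary than the paper's, which recognizes $g\mapsto\Re(\overline w^{\,T}D^\ell(g)v)$ as a Laplace eigenfunction via Proposition~\ref{thm:Disharmonic} and then invokes the codimension-one property of nodal sets.
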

\subsection{Aknowledgements}
This paper originated from a discussion with Antonio Lerario, Domenico Marinucci and Maurizia Rossi. The author would like to thank them for the many valuable suggestions, corrections and remarks.
\section{Spin bundles}\label{sec:pre}
\subsection{Preliminary definitions and notations}\label{sec:nota}
Given two Riemannian manifolds, we will use the symbol $\diffeo$ for diffeomorphisms and the symbol $\iso$ for isometries.

The field of quaternions $\H$ will be represented as the following space of matrices:
\be 
\H=\left\{h(\a,\beta):=\begin{pmatrix}
\a & -\overline{\beta} \\
\beta & \overline{\a}
\end{pmatrix}\Big| \a,\beta\in\C \right\}\diffeo \C^2\diffeo\{x^0+y^0\underline{i}+x^1 \underline{j}+y^1\underline{k}|x^i,y^i\in\R\}\diffeo \R^4,
\ee
so that its algebraic generators $\underline{i},\underline{j},\underline{k}$ are defined by the following identity
\be \label{eq:HR4}
x^0+y^0\underline{i}+x^1 \underline{j}+y^1\underline{k}=(x^0+y^0\underline{i})+(x^1 +y^1\underline{i})\underline{j}=\a+\beta \underline{j}.
\ee
For reasons that will be clear later (see Remark \ref{rem:8}), we will need to put the enlarged metric $\langle\cdot,\cdot\rangle_\H=4\langle\cdot,\cdot\rangle_{\C^2}$ on $\H$, i.e. the metric defined by the scalar product:
\be 
\langle h_1,h_2\rangle_\H:=2\Re \ \text{tr}\left(\overline{h_1}^T\cdot h_2\right)=\text{tr}\left(\overline{h_1}^T\cdot h_2+\overline{h_2}^T\cdot h_1\right).
\ee
With such representation, we obtain $\sud=\{g\in \C^{2\times 2}\colon \overline{g}^Tg=\mathbb{1}, \det(g)=1\}$ as the subgroup of length $2$ quaternions, i.e. the sphere of radius $2$ in $(\H,\langle\cdot,\cdot\rangle_\H)$.
\bega
\sud&=\left\{h(\a,\beta):=\begin{pmatrix}
\a & -\overline{\beta} \\
\beta & \overline{\a}
\end{pmatrix}\Big| \a,\beta\in\C, |\a|^2+|\beta|^2=1 \right\}
\\
&=\{h\in\H\colon \|h\|_{\C^2}=1\}=\{h\in\H\colon \|h\|_{\H}=2\}
\\
&\cong 2S^3.
\eega
\begin{remark}\label{rem:8}
The reason for taking the sphere of radius $2$ is that this is the only way to make the Hopf fibration a Riemannian submersion onto the standard round sphere $S^2$, see Proposition \ref{prop:sfera}.
\end{remark}
As it is well known, the Lie group $\sud$ is isomorphic to $\textrm{Spin}(3)$, the universal covering Lie group of $SO(3)$:
\be 
SO(3)=\left\{R\in \R^{3\times 3}\colon R^TR=\mathbb{1}\right\}= \left\{R=(u,v,p)\text{ positive orthonormal basis of $\R^3$}\right\}\diffeo T^1S^2\diffeo \R\P^3.
\ee
Of course, $SO(3)$ is also identified with the group of all Riemannian isometries of the round sphere $\S=\C\cup\{\infty\}\cong S^2$. 
Points in the Riemann sphere $\S$ will be denoted as
\be 
[z_0:z_1]=\frac{z_0}{z_1}=\zeta=\frac{1}{\eta}\in\S,
\ee
where $z_0,z_1\in\C^2\-\{0\}$, $\zeta,\eta\in \C\cup\{\infty\}$. In particular,
\be 
[1:0]=\frac{1}{0}=\infty \quad \text{and} \quad [0:1]=\frac{0}{1}=0.
\ee
As a Riemannian manifold, the Riemann sphere $\S$, with the Fubini-Study metric is isometric\footnote{The Fubini-Study metric on $\C\P^1$ appears in the literature with various different normalizations, which differ by a constant factor. We take the normalization such that the volume of $\S$ is $4\pi$.} to the standard round sphere $S^2=\{p\in\R^3\colon |p|=1\}$ via the bijection $\Phi:S^2\to \S$ defined by the stereographic projections from the 
north ($e_3\mapsto\infty$) pole, to the equatorial plane.
\be \label{eq:stereo}
S^2\ni p(\f,\h)=\begin{pmatrix}
x \\ y \\ t
\end{pmatrix}\footnote{We choose the letter $t$ instead than $z$ for the vertical coordinate in order to stress the fact that $t\in\R$ and not in $\C$.}=\begin{pmatrix}
\cos(\f)\cos(\h)  \\ \sin(\f)\sin(\h) \\ \cos(\h)
\end{pmatrix}\mapsto \quad\begin{aligned} \zeta&=\frac{x+iy}{1-t}=\cot\left(\frac{\h}{2}\right)e^{i\f}
\\
\eta&=\frac{x-iy}{1+t}=\tan\left(\frac{\h}{2}\right)e^{-i\f}
\end{aligned} \in \S.
\ee
Notice that 
\be 
\Phi(e_1)=1; \quad \Phi(e_2)=i;\quad \Phi(e_3)=\infty; \quad \Phi(-e_3)=0.
\ee
\begin{remark}\label{rem:polar}
As usual, the orientation of $S^2$ is defined by considering $e_1,e_2$ to be a positive frame in $T_{e_3}S^2$, i.e. taking the point of view of a \emph{polar bear} who's looking down at the north pole below him. The stereographic projection defined in \eqref{eq:stereo} defines the opposite orientation, the one for which the  same frame $\{e_1,e_2\}$ is a positive basis of $T_{-e_3}S^2$. Indeed, for points $\Phi(x,y,t)=\zeta$ near $\Phi(-e_3)= 0\in\C$ we have
\be 
\zeta= x+iy+O(|\zeta|^2).
\ee
In other words, $S^2$ is oriented from the point of view of a \emph{hamster} who's looking down while running (or not) inside a hollow sphere. The hamster and the polar bear measure the same angle between a given pair of tangent vectors, but with opposite sign.
In this paper we want $S^2$ to have the \emph{polar bear} orientation and a coherent complex structure. At the same time we choose to use the stereographic projection in the form defined in \eqref{eq:stereo}, because this is the most frequent form in the literature, see \cite{GM10}.
For this reason, we regard $\Phi$ as an antiholomorphic map. To recall this fact, sometimes we will write
\be 
\Phi\colon S^2\to\overline{\S}.
\ee
\end{remark}
\subsection{Riemannian submersions}
To have a clear view of the double covering map $\sud \to SO(3)$, let us fix an isometric action of $\sud $ on $\S$. Since $\S$ is the space of all complex lines $\ell\subset\C^2$ and $\H$ acts on $\C^2$ via $\C-$linear automorphisms, there is an obvious action of $h\in \H$ on $\S$, defined by $L\mapsto h(L)$. In terms of our chosen coordinates, the action of $h=h(\a,\beta)$ is expressed by a M\"oebius transformation:
\be 
M(h)=M(\a,\beta):\frac{z_0}{z_1}\mapsto \frac{\a z_0-\overline{\beta} z_1 }{\beta z_0+\overline{\a}z_1}.
\ee
A standard exercise (left to the reader) is to prove that such diffeomorphism is an isometry precisely when $|\a|^2+|\beta|^2=1$. Therefore, the above expression determines uniquely a homomorphism $M\colon \sud\to SO(3)$. Since its kernel is $\{\pm \mathbb{1}_2\}$ and $\sud\cong S^3$ is simply connected, the map  $M$ is the universal cover of $SO(3)$, hence it is equivalent to $\text{Spin}(3)\to SO(3)$.

The action of $SO(3)$ on the sphere is transitive, so that, choosing to view the sphere as the orbit of the point $\infty\in\S$, we get two compatible principal circle bundles over $S^2$:
\be \label{eq:diagroups}
\begin{tikzcd}
2S^1 \arrow[r, hook] \arrow[d, ":2"] & SU(2) \arrow[d, ":2"] \arrow[r, "\cdot \infty"] & \overline{\C\P^1} \arrow[d, "\cong" description] \\
S^1 \arrow[r, hook]       & SO(3) \arrow[r, "\cdot e_3"]                    & S^2                                      
\end{tikzcd}
\ee
where $\sud\to S^2$ is the map $g\mapsto M(g)\infty$, corresponding, via $\Phi$, to the Hopf fibration $S(\C^2)\to \S$:
\be\label{eq:elegant}
h(\a,\beta)\mapsto \zeta=\frac{\a}{\beta},
\ee 
while $SO(3)\to S^2$ is given by the action on the north pole $e_3=\Phi^{-1}(\infty)$, i.e. the map $R\mapsto R e_3$. 

Let the space $\R^{N^2}$ be identified with the space $\R^{N\times N}$ of square matrices of order $N$. Then it is easy to see that the Euclidean metric can be written as
\be 
g_{\R^{N^2}}\langle A,B\rangle:=\text{tr}(A^TB).
\ee
This metric is invariant under left and right multiplication by any matrix $g=g^{-T}\in SO(N)$, in that the trace is invariant under conjugation:
\be 
\text{tr}\left((g_1A g_2)^T(g_1Bg_2)\right)=\text{tr}\left(g_2^{-1}A^TBg_2\right)=\text{tr}(A^TB).
\ee
From this, it follows that its restriction to any subgroup $G\subset SO(N)$ is a bi-invariant metric. In particular, the metric induced by the inclusions $SO(3)\subset \R^9$ and $\sud\subset SO(4)\subset \R^{16}$ are bi-invariant. We also get an inclusion $\R^8=\C^{2\times 2}\subset\R^{4\times 4}$ by identifying $\C$-linear endomorphisms of $\C^{2}$ with the set of $\R$-linear endomorphisms of $\R^4$ that satisfy the Cauchy-Riemann equations. Since this inclusion has a diagonal form, the induced metric on $\R^8$ is doubled: $g_{\R^{16}}=2g_{\R^8}$. The same happens with the inclusion $\R^4\diffeo\H\subset \C^{2\times 2}$, so that
\be 
g_{\R^{16}}=2g_{\R^8}=4g_{\R^4}.
\ee
From this we see that the metric induced on $\sud$ by the inclusion in $SO(4)\subset \R^{16}$ corresponds to the round metric of a sphere of radius $2$. It turns out that such metric is the only one for which the Hopf fibration $\pr\colon\sud\to S^2$ is a Riemannian submersion.
\begin{prop}\label{prop:sfera}
Assume that $\S$ is given the round metric of radius $r$, namely $\S\cong r S^2$. There is a unique choice of bi-invariant Riemannian metrics on $\sud$ and $SO(3)$ such that all the maps in the diagram \eqref{eq:diagroups} are Riemannian submersions. Such choice is $2r S^3\cong\sud\subset (\C^2,4r^2g_{\R^{4}})$ and $SO(3)\subset (\R^{3\times 3},\frac{r^2}{2}g_{\R^9})$. In particular the lengths of the fibers are $4\pi r$ and $2\pi r$.
\end{prop}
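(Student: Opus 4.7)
The plan uses that any bi-invariant metric on the simple compact $3$-dimensional Lie groups $\sud$ and $SO(3)$ is proportional to the Killing form, so determining such a metric amounts to fixing one positive real parameter for each group. I will impose the two Riemannian-submersion conditions onto $\S$ separately; vertical consistency of the covering $\sud\to SO(3)$ will then come for free, and uniqueness is automatic because each scalar is pinned down by a single linear equation.

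For $\pi_1\colon \sud\to\S$ I would work at the identity. The fiber through $\mathbb{1}$ is $\{h(e^{i\theta},0)\}$, with vertical tangent $V=\text{diag}(i,-i)\in\mathfrak{su}(2)$, and the horizontal complement is the real two-dimensional subspace of off-diagonal traceless skew-Hermitian matrices, parametrized by $b\in\C$. Reading the Hopf map in the coordinate $\eta=\beta/\alpha$ near $\infty$ shows that a horizontal tangent vector labelled by $b$ is sent to $b\in T_0\S$. On the target side, the stereographic formula $\eta=\tan(\theta/2)e^{-i\varphi}$ shows that the round metric of radius $r$ at $\eta=0$ reads $4r^2|d\eta|^2$, the factor of $4$ coming from the half-angle. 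Imposing isometry on horizontals then forces the scalar so that the resulting bi-invariant metric on $\sud$ coincides with the one induced by $(\C^2,4r^2g_{\R^4})$; this realizes $\sud$ as $2rS^3$ and gives the fiber length $2\pi\|V\|=4\pi r$.

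The parallel calculation for $\pi_2\colon SO(3)\to\S$ at the identity, with vertical direction the generator $E_3$ of rotations about $e_3$ and differential $X\mapsto rXe_3$, yields $\frac{r^2}{2}g_{\R^9}$ as the induced bi-invariant metric on $SO(3)$, with fiber length $2\pi r$. To close the diagram I would verify that the cover $M\colon\sud\to SO(3)$ is a local isometry: horizontal subspaces match isometrically because the two Hopf maps isometrize them and the triangle in \eqref{eq:diagroups} commutes, while on the vertical the explicit identity $M(h(e^{i\theta},0))=\exp(2\theta E_3)$ shows that $M$ doubles the angular parameter, matching exactly the ratio $4\pi r:2\pi r=2:1$ of the fiber lengths and the degree of the cover.

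The hard part is just careful bookkeeping: several factors of $2$ enter at once (the quaternion realization, in which $\|h\|_\H=2\|h\|_{\C^2}$; the half-angle in the stereographic projection; and the degree of $\sud\to SO(3)$), and getting any one of them wrong would make some arrow of \eqref{eq:diagroups} fail to be a Riemannian submersion. Beyond this, the argument is a routine application of the bi-invariance constraint at the identity of each group.
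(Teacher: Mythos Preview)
Your proposal is correct and takes a genuinely different route from the paper. The paper argues uniqueness of the bi-invariant metric up to scalar via the adjoint action of $SO(3)$ on its Lie algebra, then fixes the scalar on $\sud$ by a \emph{global} volume argument: the coarea formula for the Riemannian submersion $aS^3\to rS^2$ with great-circle fibers gives $2\pi^2 a^3=(2\pi a)(4\pi r^2)$, hence $a=2r$; the scalar on $SO(3)$ is then read off from the required fiber length $2\pi r$. You instead work \emph{locally} at the identity, computing the differential of each Hopf map on the horizontal subspace and matching it with the round metric on $\S$ in the stereographic chart, which pins down both scalars directly; the covering $\sud\to SO(3)$ is checked a posteriori. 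Your approach trades the coarea formula and the fact that Hopf fibers are geodesic circles for explicit coordinate bookkeeping (the half-angle, the quaternion norm, the doubling under $M$); it is more hands-on but entirely self-contained. The paper's approach is slicker in that it never touches a chart on $\S$, but it relies on knowing the fiber geometry in advance. Both arguments are short and either would serve the paper's purposes.
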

\begin{proof}
The metric on $SO(3)$ defined by the inclusion $SO(3)\subset \R^{3\times 3}$ (with its standard metric) is bi-invariant. Choosing an orthonormal basis of $T_1SO(3)$ we get an identification of the Lie algebra $T_1SO(3)\cong \R^3$, such that the adjoint action of $SO(3)$ on $T_1SO(3)$ is isometric and, moreover, it is given by the identity: $SO(3)\to SO(\R^3)$. Any bi-invariant metric on $SO(3)$ is thus defined by a metric on $\R^3$ which is invariant by the action of $SO(3)$, but there is only one such metric, up to a constant factor. The map $SU(2)\to SO(3)$ is a Riemannian submersion if and only if it is a local isometry. This implies that the metric on $SO(3)$ defines uniquely the metric on $SU(2)$ and viceversa. Moreover, the lifted metric on $SU(2)$ is bi-invariant and the same argument implies that it is the only bi-invariant metric on $SU(2)$, up to a constant factor. Since the round metric, obtained from the inclusion $\sud\subset \R^4$ is bi-invariant, it follows that there is a unique choice, corresponding to $\sud\cong aS^3$ for some $a$ that can be determined by a computation of the volumes. Indeed, since the map $\sud\to \S$ is a Riemannian submersion, by using the coarea formula we deduce that
\be 
2\pi^2 a^3=\vol(aS^3)= (2\pi a) \cdot (4\pi r^2),
\ee 
from which we obtain $a=2r$. Here, we are also using the fact that the fibers of the Hopf fibration $\sud\cong aS^3\to\S$ are geodesic circles and thus have length $2\pi a$. Finally, we get the normalization of the metric on $SO(3)$ by observing that the fibers
of the map $\cdot e_3\colon SO(3)\to S^2$ must have length $2r\pi$, given that the leftmost vertical map in the diagram \eqref{eq:diagroups} is a Riemannian submersion. Thus the following tangent vector $v$ must have length $r$, being a generator with ``period'' $2\pi$ of the fiber over $e_3\in S^2$: 
\be v= \begin{pmatrix}
0 & -1 &0\\
1 &0 &0\\
0& 0 & 0
\end{pmatrix}\in T_{\mathbb{1}}SO(3)\subset \R^9.
\ee
\end{proof}
A similar reasoning can be applied to define the orientation of $\sud$ and $SO(3)$, by observing that by taking a point $p\in S^2$, the fiber of over $p$ of both projections can be canonically embedded as a small loop around $p$ and thus defines an orientation $S^2$.
\begin{prop}
There is a unique choice of orientations on $\sud$ and $SO(3)$ such that all the vertical maps are, locally, orientation preserving and such that the horizontal sequences define the standard \emph{polar bear} orientation on $S^2$.
\end{prop}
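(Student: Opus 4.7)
The plan is to apply the oriented fibration principle: for a principal circle bundle with oriented fiber and oriented base, the total space carries a canonical orientation, given at each point by a positive fiber tangent vector followed by horizontal lifts of a positive base frame. Fixing the polar bear orientation on $S^2$ and recalling that by construction of $\overline{\C\P^1}$ together with Remark \ref{rem:polar} the rightmost vertical map $\overline{\C\P^1}\to S^2$ is orientation preserving, the problem reduces to choosing compatible orientations on the two fiber circles $S^1\subset SO(3)$ and $2S^1\subset\sud$.

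First I would orient the fiber $S^1\subset SO(3)$ over $e_3$ by declaring the generator $v$ from the proof of Proposition \ref{prop:sfera} to be positive; since $ve_1=e_2$, this choice matches the polar bear frame $(e_1,e_2)$ of $T_{e_3}S^2$ in the sense of the oriented fibration principle and so reproduces the polar bear orientation on the base. I would then orient the fiber $2S^1\subset\sud$ so that its positive generator covers a positive multiple of $v$ under the differential of $\sud\to SO(3)$. Using the Möbius formula $M(h(e^{i\psi},0))\colon\zeta\mapsto e^{2i\psi}\zeta$, one checks that $\begin{pmatrix} i & 0\\ 0 & -i\end{pmatrix}\in T_{\mathbb{1}}\sud$ is sent by the Lie algebra cover to $2v$, so this matrix is taken as the positive generator of the fiber in $\sud$. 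The leftmost vertical map $2S^1\to S^1$ is then locally orientation preserving by construction, and applying the oriented fibration principle to both horizontal sequences yields orientations on $SO(3)$ and on $\sud$ that induce the polar bear on the base.

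It remains to verify that the middle vertical map $\sud\to SO(3)$ is locally orientation preserving and to conclude uniqueness. Since $\sud\to SO(3)$ is a Riemannian covering intertwining the two principal circle bundles, its differential at any point splits as a direct sum of a fiberwise piece (orientation preserving by the previous paragraph) and a horizontal piece, the latter being a linear isomorphism of horizontal subspaces both canonically identified with the same tangent space $T_pS^2$ via the Riemannian submersions, hence orientation preserving as well. Uniqueness is automatic, as every step was forced: the polar bear on $S^2$ determines the positive generator of $S^1$ and thereby the orientation of $SO(3)$; compatibility of the upper vertical map then determines the positive generator of $2S^1$ and thereby the orientation of $\sud$. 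The only substantive verification in the proof is the sign of the factor $2$ in the Möbius action, needed to confirm that the chosen generator of $2S^1$ does cover a positive (and not negative) multiple of $v$; I expect this sign check to be the main, though minor, obstacle.
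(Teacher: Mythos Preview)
Your proof is correct and essentially fills in what the paper leaves implicit. In the paper this proposition is stated without proof: the only justification given is the sentence immediately preceding it, namely that ``by taking a point $p\in S^2$, the fiber over $p$ of both projections can be canonically embedded as a small loop around $p$ and thus defines an orientation [on] $S^2$'', together with the phrase ``a similar reasoning can be applied''. So there is no detailed argument to compare against.

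What you do is exactly a formalization of that hint. The paper's ``small loop around $p$'' is your choice of positive generator of the fiber circle, and the compatibility of the two loops (over $p$ in $SO(3)$ and in $\sud$) is your check that $\begin{pmatrix} i & 0 \\ 0 & -i\end{pmatrix}$ covers $2v$. Your splitting of the differential of $\sud\to SO(3)$ into fiberwise and horizontal pieces to verify the middle vertical map is a clean way to finish, and the uniqueness argument is straightforward as you note. The sign check via the M\"obius action $\zeta\mapsto e^{2i\psi}\zeta$ is correct and matches the paper's conventions (cf.\ the proof that $g_3(\psi)\mapsto R_3(\psi)$). In short: your approach agrees with the paper's intended one, but you have actually written the proof whereas the paper only gestures at it.
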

In the following, we will always consider $\sud$, $SO(3)$ and $S^2$ with the orientation provided by the above proposition, which we will keep referring to as the \emph{polar bear orientation}.
\subsection{Euler angles}
All the matrices $h(\a,\beta)\in\sud$ can be written in terms of Euler angles, via the following surjective parametrization:
\be 
\a=\cos\left(\frac{\h}{2}\right)e^{i\frac{\f}{2}}e^{i\frac{\psi}{2}}; \quad
\beta=\sin\left(\frac{\h}{2}\right)e^{-i\frac{\f}{2}}e^{i\frac{\psi}{2}},
\ee
with $\f\in [0,2\pi ]$, $\h\in [0,\pi]$ and  $\psi\in [-2\pi,2\pi]$. Notice that then $\f,\h$ are precisely the polar coordinates of the point $p(\f,\h)=g(\f,\h,\psi)\infty=R(\f,\h,\psi)e_3$ (see the identities \eqref{eq:stereo}).
\bega\label{eq:Eule}
g(\f,\h,\psi)&:=\begin{pmatrix}
e^{i\frac{\f}{2}} & 0 \\  0 & e^{-i\frac{\f}{2}}
\end{pmatrix}
\begin{pmatrix}
\cos\left(\frac{\h}{2}\right) & -\sin\left(\frac{\h}{2}\right) \\  \sin\left(\frac{\h}{2}\right) & \cos\left(\frac{\h}{2}\right)
\end{pmatrix}
\begin{pmatrix}
e^{i\frac{\psi}{2}} & 0 \\  0 & e^{-i\frac{\psi}{2}}
\end{pmatrix}
\\
&=: g_3(\f)g_2(\h)g_3(\psi).
\eega
\begin{remark}\label{rem:difference}
Such notation for the Euler angle in $\sud$ is different to to that of \cite[Sec 3.2]{libro}. More precisely, one is obtained from the other after the transformation
\be 
\beta\mapsto -\overline{\beta} \quad \text{ i.e. }\quad  \h\mapsto -\h.
\ee
We made this change in order to have an elegant formula \eqref{eq:elegant} for the Hopf fibration $\sud\to \S$.
Nevertheless, the convention Euler angles $\f,\h,\psi$ for $SO(3)$ is the same as in the book \cite{libro}, namely the so called $zyz$ convention. 
\end{remark}
The image of $g_3(\psi)$ and $g_2(\h)$ under the quotient $\sud\to SO(3)$ are the matrices of the standard rotations around $e_3$ and $e_2$:
\begin{prop}
\be 
g_3(\psi)\mapsto R_3(\psi)=\begin{pmatrix}
1 & 0 &0\\
0 &\cos(\psi) &-\sin(\psi)\\
0& \sin(\psi) & \cos(\psi)
\end{pmatrix}; \quad g_2(\h)\mapsto R_2(\h)=\begin{pmatrix}
\cos(\h) &0&\sin(\h)\\
0&1&0\\
 -\sin(\h) &0& \cos(\h)
\end{pmatrix}.
\ee
Therefore
\be 
g(\f,\h,\psi)\mapsto R(\f,\h,\psi):=R_3(\f)R_2(\h)R_3(\psi)
.\ee
\end{prop}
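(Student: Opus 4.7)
The plan is to compute everything directly from the definition of the covering map $M\colon \sud\to SO(3)$ given earlier in the paper, namely the one assigning to $h(\a,\b)$ the Möbius transformation $\zeta\mapsto(\a\zeta-\overline{\b})/(\b\zeta+\overline{\a})$ on $\S$, read as an isometry of $S^2$ through the stereographic projection $\Phi$ of \eqref{eq:stereo}. Since $M$ is a group homomorphism, once the images of $g_3(\psi)$ and $g_2(\h)$ are identified the formula for $g(\f,\h,\psi)=g_3(\f)g_2(\h)g_3(\psi)$ follows immediately. So the proof reduces to two separate case computations.

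For $g_3(\psi)=h(e^{i\psi/2},0)$ the induced Möbius map is simply $\zeta\mapsto e^{i\psi}\zeta$. Using the polar form $\zeta=\cot(\h/2)e^{i\f}$, this is the shift $\f\mapsto\f+\psi$ at fixed latitude $\h$, which by \eqref{eq:stereo} is exactly the action of the stated matrix on $p(\f,\h)\in S^2$; the two fixed points are $\zeta=0,\infty$, i.e.\ $\pm e_3$, which matches the form of the matrix. For $g_2(\h_0)=h(\cos(\h_0/2),\sin(\h_0/2))$ the Möbius map is $\zeta\mapsto\bigl(\cos(\h_0/2)\zeta-\sin(\h_0/2)\bigr)/\bigl(\sin(\h_0/2)\zeta+\cos(\h_0/2)\bigr)$; I would check that its fixed points on $\S$ pull back under $\Phi$ to $\pm e_2$, and that the restriction to the great circle through $\pm e_1,\pm e_3$ is rotation by the correct angle $\h_0$. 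This can be done by applying the map to a couple of well-chosen test points (say $\zeta=1$ and $\zeta=0$) and comparing the resulting triples in $\R^3$ with the columns of $R_2(\h_0)$.

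Finally, the formula $g(\f,\h,\psi)\mapsto R_3(\f)R_2(\h)R_3(\psi)$ is a purely formal consequence of the multiplicativity of $M$ and the factorization \eqref{eq:Eule} already recorded for $g(\f,\h,\psi)$.

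The main obstacle is bookkeeping: the paper uses a non-standard stereographic convention (see Remark \ref{rem:polar}) and a modified Euler parametrization (see Remark \ref{rem:difference}), so extra care is required with signs and with the \emph{polar bear} orientation to be sure that the verification lands on the specific matrices written in the statement (with the appropriate axis for each factor) rather than on their inverses or conjugates. Once the two building blocks $g_2$ and $g_3$ are checked against a single reference point in $\R^3$ each, the rest is automatic.
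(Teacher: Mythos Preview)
Your approach is essentially the same as the paper's, only more fully written out: the paper's proof consists solely of the four evaluations $g_2(\tfrac{\pi}{2})\infty=1$, $g_2(\h)i=i$, $g_3(\psi)\infty=\infty$, $g_3(\tfrac{\pi}{2})1=i$, which are exactly the fixed-point and single-test-point checks you propose for each factor, after which multiplicativity gives the general formula. Your version is more explicit about the geometric interpretation (the shift $\f\mapsto\f+\psi$, the identification of the rotation axes), but the verification strategy is identical.
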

\begin{proof} It is sufficient to check that
$g_2(\frac{\pi}{2})\infty=1$ and $g_2(\h)i=i$. Moreover, $g_3(\psi)\infty=\infty$ and $g_3(\frac{\pi}{2})1=i$.
\end{proof}
To get a surjective parametrization of $SO(3)$, it is sufficient to take all matrices of the form $R(\f,\h,\psi)$, with $\f\in[0,2\pi]$, $\h\in [0.\pi]$, $\psi\in [0,2\pi]$. Indeed $R(\f,\h,\psi)=R(\f,\h,\psi+2\pi)$, while $g(\f,\h,\psi)=-g(\f,\h,\psi+2\pi)$.
\begin{remark}
The principal bundle structure in \eqref{eq:diagroups} is the one defined by the right multiplication by matrices of the form $g_3(\psi)$ and $R_3(\psi)$. From this point of view, the leftmost vertical map in the diagram corresponds to the double covering 
\be 
\R/{4\pi\Z}\to \R/{2\pi\Z}.
\ee
\end{remark}
\subsection{Definition of the Spin-weighted functions}\label{sec:defispin}
Newman and Penrose define the spin weight as follows \cite{NP66}: \emph{a quantity $u$ defined on $\mathbb S^2$ has spin weight $s$  if, whenever a tangent vector $\rho$
 at any point $x$ on the sphere transforms under coordinate change  by
$\rho'=e^{i \psi} \rho$, then the quantity at this point $x$ transforms
by  $u'=e^{is\psi} u$}.

In \cite{GM10} the authors introduce the mathematical model for spin weighted functions, viewing them as sections of complex line bundles on $S^2$. Similar approaches have been taken in \cite{BR13, malya11}.
From the statement of Newman and Penrose, it is immediately clear that a function on $S^2$ with spin weight equal to $1$ should be a section of the bundle $\spi{1}:=TS^2$ endowed with its standard complex structure. Moreover, it is also clear that a spin $s$ function is a section of $\spi{s}=\spi{1}\otimes_\C\dots \otimes_\C \spi{1}$ ($s$ times), because the transition functions of the latter bundle are, by definition, the $s^{th}$ power of those of $\spi{1}$. This is true for all $s\ge 0$. When $s\le 0$ we can argue in the same way, after noticing that the transition functions for $\spi{-1}$ are the inverse of those of $\spi{1}$, thus $\spi{-1}=(TS^2)^*$ is the dual (i.e. the inverse in the group of all line bundles) of $TS^2$. 
\begin{defi}\label{def:intrispin}
Let $s\ge 0$, we define the bundles:
\bega 
\spi{s}&:=(TS^2)^{\otimes s}=TS^2\otimes_\C\dots \otimes_\C TS^2, \quad \text{$s$ times;}
\\
\spi{-s}&:=(TS^2)^{\otimes -s}=\left((TS^2)^*\right)^{\otimes s}=(TS^2)^*\otimes_\C\dots \otimes_\C (TS^2)^*, \quad \text{$s$ times}.
\eega
\end{defi}
\begin{remark}
Complex line bundles on the sphere are classified by their Chern class $c_1\in H^2(S^2,\Z)\cong \Z$, or equivalently by their Euler characteristic $ \chi $, when thought as real oriented rank $2$ vector bundles (they are related by $c_1\frown [S^2]=\chi$). 
In fact, it is well known that in general the set of isomorphism classes of line bundles form an abelian group in which the opposite element of $L$ is $L^*$. In this case the group is isomorphic to $ \Z$ and the isomorphism is given exactly by the Euler characteristic.

For this reason, an equivalent way to state Definition \ref{def:intrispin} (up to isomorphism) is to say that $\spi{s}$ is \emph{the} complex line bundle on $S^2$ with 
\be 
\chi\left(\spi{s}\right)=2s.
\ee
\end{remark}

In terms of the Riemann sphere $\C\P^1$, we have that the (holomorphic) line bundle $\mathcal{O}(2s)\to \C\P^1$ has Euler characteristic $\chi(O(2s))=2s$. Therefore the spin $s$ bundle $\spi{s}$ must be isomorphic to the smooth complex line bundle underlying $\Phi^*\mathcal{O}(-2s)$ because $\Phi$ reverses the orientation, or, equivalently, $\Phi^*\overline{\mathcal{O}(2s)}$.  The latter is a holomorphic bundle with respect to the holomorphic structure on $S^2$. From this we see that Definition \ref{def:intrispin} can be extended to all (and not more) $s\in \frac12 \Z=\pm\frac{1}{2},\pm 1, \pm\frac{3}{2},\dots$. 
\begin{defi}
The \emph{spin s bundle} is the complex line bundle $\spi{s}\to S^2$ defined as
\be 
\spi{s}:=\Phi^*\overline{\mathcal{O}(2s)}, \quad \forall s\in\frac{1}{2}\Z.
\ee
\end{defi}
\begin{prop}\label{prop:intrispin}
Let $s\in \frac12 \N$, then the spin $s$ bundle is the $2s$ tensor power of the spin $\frac12$ bundle.
\bega 
\spi{s}&:=\left(\spi{\frac12}\right)^{\otimes 2s},
\\
\spi{-s}&:=\left(\spi{\frac12}\right)^{\otimes (-2s)}=\left(\left(\spi{\frac12}\right)^*\right)^{\otimes 2s}=\left(\spi{-\frac12}\right)^{\otimes 2s}.
\eega
\end{prop}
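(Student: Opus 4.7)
The plan is to deduce both identities from the group-theoretic classification of complex line bundles on $S^2$ recalled in the preceding remark: isomorphism classes of such bundles form an abelian group under tensor product, in which the inverse of $L$ is $L^*$, and the Euler characteristic $\chi$ gives an isomorphism of this group onto $\Z$. In particular $\chi$ is additive under $\otimes$ and satisfies $\chi(L^*)=-\chi(L)$.

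Granted this, the first step is to record the base case $\chi(\spi{\frac12})=1$, which is just the specialization at $s=\frac12$ of the already-established formula $\chi(\spi{s})=2s$. For $s\in\frac12\N$ the integer $2s$ is non-negative, so the tensor power $(\spi{\frac12})^{\otimes 2s}$ is a well-defined complex line bundle, and additivity of $\chi$ yields
\[
\chi\bigl((\spi{\frac12})^{\otimes 2s}\bigr)=2s\cdot\chi(\spi{\frac12})=2s=\chi(\spi{s}).
\]
Since $\chi$ classifies complex line bundles on $S^2$ up to isomorphism, this forces $\spi{s}\cong(\spi{\frac12})^{\otimes 2s}$, which is the first identity.

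For the second identity one observes that $\chi(\spi{-s})=-2s=-\chi(\spi{s})$, so the group structure forces $\spi{-s}\cong(\spi{s})^*$. Since dualization commutes with tensor powers, one then obtains
\[
\spi{-s}\cong\bigl((\spi{\frac12})^{\otimes 2s}\bigr)^*\cong\bigl((\spi{\frac12})^*\bigr)^{\otimes 2s}\cong(\spi{-\frac12})^{\otimes 2s},
\]
which is the chain displayed in the proposition.

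There is no real obstacle in this plan: all the substantive content (the group structure on line bundles, the formula $\chi(\spi{s})=2s$, and the compensation between the orientation reversal of $\Phi$ and the complex conjugation built into the definition $\spi{s}=\Phi^*\overline{\mathcal{O}(2s)}$) has already been absorbed into the preceding remark and definitions, leaving only the formal additivity computation above.
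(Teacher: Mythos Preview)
Your argument is correct and is exactly the one the paper intends: the proposition is stated in the paper without an explicit proof, immediately after the remark that complex line bundles on $S^2$ are classified by $\chi$ and that $\chi(\spi{s})=2s$, so the additivity computation you wrote out is precisely the implicit justification.
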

\subsection{$\sud$ is the radius $2$ sphere bundle of $\spi{\frac12}$}
\begin{defi}
Let $L\to M$ be a complex line bundle, endowed with an Hermitian  norm $\|\cdot\|$. Then its radius $r>0$ sphere bundle is the circle bundle $rS(L)\to M$
\be 
rS(L):=\{v\in L\colon \|v\|=r\}.
\ee
\end{defi}
\begin{remark}
If $L$ is a holomorphic line bundle over a K\"ahler Riemann surface, then the total space $rS(L)$ inherits a Riemannian metric, via the Chern connection of $L$, which makes $S(L)\to M$ a Riemannian submersion.
\end{remark}
The complex line bundle $\spi{\frac12}=\mathcal{O}(-1)\to\C\P^1$ is defined as
\be 
\mathcal{O}(-1)= \{(\ell, h)\in \S\times \C^2\colon h\in\ell\}\xrightarrow{\pi_1} \S.
\ee
Since the fiber over $\ell\in\S$ is $\ell\subset \C^2$, the bundle $\mathcal{O}(-1)$ is called the \emph{tautological} bundle. Notice also that by restricting to the complement of the zero section $\S\subset \mathcal{O}(-1)$, we get a tautological diffeomorphism
\bega\label{eq:tautmap}
\q \colon\H\-\{0\} &\xrightarrow{\diffeo} \mathcal{O}(-1)\- \S
\\
h=h(\a,\beta) &\mapsto \q_h:=\left(\frac{\a}{\beta},\begin{pmatrix}
\a \\ \beta
\end{pmatrix}\right).
\eega
In particular, this map transports the metric $g_\H$ into an Hermitian bundle metric on $\spi{\frac12}=\mathcal{O}(-1)$, such that the restriction of $\q$  to $\sud=2S^3\subset \H$, is an isomorphism of principal $2S^1$ bundles. Thus,
according to the \emph{polar bear} orientation on $\sud$, the map $\tau$ descends to an orientation preserving isometry of the total spaces:
\begin{prop} We have an isomorphism of complex line bundles
\be\label{eq:spherebu1}
\sud\iso 2S\left(\spi{\frac12}\right)\to S^2,
\ee
where the projection map $\sud \to S^2$ is given by the action on the north pole $e_3=\Phi^{-1}(\infty)\in S^2$, as in the diagram \eqref{eq:diagroups}.
\end{prop}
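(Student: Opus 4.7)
The plan is to show that the tautological diffeomorphism $\q\colon \H\setminus\{0\}\to \mathcal{O}(-1)\setminus\S$ defined in \eqref{eq:tautmap} restricts to the sought isomorphism once we identify $\sud=2S^3\subset\H$ with the sphere of radius $2$. Since the map $\q$ carries all the structure we need by construction, the proof reduces to checking four compatibilities: projection, norm, circle action, and orientation.

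First I would verify the compatibility of the two projections down to $S^2\iso\overline{\S}$. On the group side, for $h=h(\a,\beta)\in\sud$, the projection is $h\mapsto M(h)\cdot\infty=[\a:\beta]=\a/\beta$, by the M\"oebius formula recorded just before \eqref{eq:elegant}. On the bundle side, $\pi_1\circ\q$ sends $h(\a,\beta)\mapsto \a/\beta$ by definition of $\q$. Hence the diagram commutes as maps to $\overline{\S}\iso S^2$. Next, the Hermitian bundle metric on $\spi{\frac12}=\mathcal{O}(-1)$ is defined as the push-forward of $g_\H$ under $\q$, so $\q$ is tautologically a fiberwise isometry on $\H\setminus\{0\}$. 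In particular $\q(\sud)=\q(\{h\in\H:\|h\|_\H=2\})=2S(\mathcal{O}(-1))$ and $\q|_{\sud}$ is a global isometry between these total spaces.

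Then I would check the principal $2S^1$-bundle structure. The circle fiber of $\sud\to\S$ is the orbit of right multiplication by $g_3(\psi)=\mathrm{diag}(e^{i\psi/2},e^{-i\psi/2})$ with $\psi\in\R/4\pi\Z$; explicitly $h(\a,\beta)\cdot g_3(\psi)=h(\a e^{i\psi/2},\beta e^{i\psi/2})$. Applying $\q$ gives
\[
\q\bigl(h(\a,\beta)\cdot g_3(\psi)\bigr)=\Bigl(\tfrac{\a}{\beta},\,e^{i\psi/2}(\a,\beta)\Bigr),
\]
so the right $g_3(\psi)$-action corresponds to the standard scalar action $v\mapsto e^{i\psi/2}v$ on the fiber of $\mathcal{O}(-1)$. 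As $\psi$ runs through $\R/4\pi\Z$, the scalar $e^{i\psi/2}$ runs through $S^1$ faithfully once, matching the principal $S^1$-action on $2S(\mathcal{O}(-1))$ whose fiber has length $4\pi$. Together with the isometry property this confirms the principal bundle isomorphism.

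Finally I would address orientations: both total spaces carry the \emph{polar bear} orientation provided by the preceding proposition, characterized by the projection to $S^2$ together with the chosen orientation of the fiber circle. Since $\q|_{\sud}$ is a bundle isometry covering the identity on $\overline{\S}$ and compatible with the same circle-action convention above, it is automatically orientation-preserving. The main subtlety will just be bookkeeping: one must keep track of the conjugation hidden in $\Phi\colon S^2\to\overline{\S}$ (Remark \ref{rem:polar}), so that the map $\sud\to S^2$ through $\cdot\infty$ and the induced map $2S(\spi{\frac12})\to S^2$ land on the same oriented $S^2$. Once this is sorted out, the four compatibilities combine to yield the claimed isomorphism of oriented Hermitian circle bundles $\sud\iso 2S(\spi{\frac12})\to S^2$.
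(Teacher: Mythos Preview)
Your proposal is correct and follows essentially the same approach as the paper: the proposition is stated there as an immediate consequence of the construction of $\q$ in \eqref{eq:tautmap} and of the preceding paragraph, which already records that $\q$ transports $g_\H$ to the Hermitian metric on $\mathcal{O}(-1)$ and restricts to an isomorphism of principal $2S^1$-bundles compatible with the polar bear orientation. You have simply made these four compatibility checks explicit, which is entirely appropriate.
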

\begin{remark}
The total space of $\mathcal{O}(-1)$ is, by definition, the K\"ahler manifold obtained as the blow-up of $\C^2$ at the point $0$. It can be seen that then, in the category of smooth manifolds, $O(-1)$ is diffeomorphic to $\C\P^2\-\{pt\}$.
\end{remark}
\subsection{Spin weighted functions}
Let $p\in \S$ and $v\in \mathcal{T}_p^{\otimes\frac12}\-\{0\}$, then the fiber over $p$ of $\spi{s}$ is 
\bega
\mathcal{T}_p^{\otimes\frac12}&=\left\{\sum_{i}v_1^i\otimes\dots\otimes v_{2s}^i \text{, s.t. } v_j^i\in \spi{\frac12}_p\right\}=\{z\cdot v\otimes\dots \otimes v\colon z\in\C\}.
\eega
When $\xi$ changes: $\xi'=w\xi$, the vector $v^{\otimes 2s}=v\otimes\dots \otimes v$ changes accordingly to:
\be 
(v')^{\otimes 2s}=w^{2s} v^{\otimes 2s}.
\ee
When $s<0$, the above description still makes sense, if $v^{\otimes (-2s)}\in \spi{-\frac12}$ is defined as the linear form $\spi{s}\to \C$ such that $\langle v^{\otimes (-1)},v\rangle=1$, with respect to the duality pairing $\spi{-s}=\spi{s}^*$.
\begin{remark}\label{rem:cozione}
Notice that the coordinates of an element $\tau=z v^{\otimes 2s}=z'(v')^{\otimes 2s}\in \spi{s}$ have spin weight $=-s$:
\be 
z'=w^{-2s}z,
\ee
indeed ``the coordinates of vectors are covectors, hence they belong to the dual bundle''.
\end{remark}
\begin{remark}
In the book \cite[p. 287]{libro} we see that the transition functions for the bundle $\spi{s}$ are 
\be \label{eq:tranz}
f_{R_2}(x)=\exp(is\psi_{R_2 R_1})f_{R_1}(x),
\ee
where $\psi_{R_1 R_2}$ is the  angle between $\frac{\de}{\de \f_{R_1}}$ and $\frac{\de}{\de \f_{R_2}}$, measured in the usual way, from the outside of the sphere, see Remark \ref{rem:polar}.
\be 
\frac{\de}{\de \f_{R_2}}=e^{-i\psi_{R_2 R_1}}\frac{\de}{\de \f_{R_1}}.
\ee
Therefore the rule \eqref{eq:tranz} is equivalent to the transition rule for $(TS^2)^{\otimes s}=\spi{s}$, for any $s\in\N$:
\be 
f_{R_2}(x)\left( \frac{\de}{\de \f_{R_2}}\right)^{\otimes s}=f_{R_1}(x)\left( \frac{\de}{\de \f_{R_1}}\right)^{\otimes s}.
\ee
\end{remark}
\begin{defi}
We define a Hermitian bundle metric on $\spi{s}$ such that for any $g\in\sud$ we have
\be 
\|(\q_g)^{\otimes 2s}\|=\frac{1}{|s|};
\ee
 for any $s\in\frac12 \Z$, where $\tau\colon \sud\to \spi{\frac12}$ is defined as in \eqref{eq:tautmap}.
\end{defi}
In this way, the map $\q^{\otimes 2s}$ is a $2|s|$-fold covering of the radius $\frac{1}{|s|}$ circle bundle of $\spi{s}$.
\be \label{eq:spherebu2}
\q^{\otimes 2s}\colon \sud\to \frac{1}{|s|}S(\spi{s})=\left\{v\in\spi{s}\colon \|v\|=\frac{1}{|s|}\right\}.
\ee
The above map is a principal bundle with respect to the right action of the cyclic subgroup of order $2|s|$ generated by the element $g_3\left(\frac{2\pi}{s}\right)\in \sud$, which is the finite group
\bega 
\sqrt[2s]{1}&:=\{h\in\H\colon h^{2s}=1\}
\\
&=
 \left\{g_3\left(\frac{2\pi}{s}\right),g_3\left(\frac{2\pi}{s}2\right),\dots, g_3\left(\frac{2\pi}{s}(2|s|-1)\right),g_3\left(\frac{2\pi}{s}2|s|\right)\right\}
 \cong \Z_{2|s|}.
\eega
By declaring the map $\eqref{eq:spherebu2}$ to be a Riemannian covering (i.e. a covering that is also a Riemannian submersion), we can define a metric on the total space of $\frac{1}{|s|}S(\spi{s})$. We define the resulting Riemannian manifold as
\be 
\sudo{2s}:=\sud/\sqrt[2s]{1}\iso \frac{1}{|s|}S(\spi{s}).
\ee 
\begin{remark}
As Riemannian manifolds, there is no difference between $\sudo{2s}$ and $\sudo{-2s}$, but they have opposite orientations, since they are the total spaces of a pair of circle bundles over $S^2$ that are dual to each other.
\end{remark}
\begin{remark}
By definition, $\sudo{2s}\to \S$ is a Riemannian circle bundle having fibers of length $\frac{2\pi}{|s|}$. In particular $\sudo{2}\cong SO(3)$.
\end{remark}
We can in fact extend the commutative diagram in \eqref{eq:diagroups} to every $s\in\frac12 \Z$:
\be \label{eq:extdiagroups}
\begin{tikzcd}
2S^1 \arrow[r, hook] \arrow[d, ":s"] & SU(2) \arrow[d, ":s"] \arrow[r, "\pr"] & \overline{\C\P^1} \arrow[d, "\cong" description] \\
\frac1s S^1 \arrow[r, hook]       & \sudo{2s} \arrow[r, " "]                    & S^2                                      
\end{tikzcd}
\ee

Let $\sigma\colon S^2\to \spi{s}$ be a section. Then, obviously,  for any point $p\in S^2$ and $v\in \spi{\frac12}$, we have
\be 
\sigma(p)=z_\sigma(p,v) v^{\otimes 2s},
\ee
for some $z_\sigma(p,v)\in\C$. A convenient way to understand this $z_\sigma(p,v)$ is to observe that for any such $p,v$
 there exists a unique $g\in \sud$ such that $g\cdot \infty=p$ and $\q_g=v$. 
It follows that the section $\sigma\colon S^2\to \spi{s}$ is uniquely determined by a function $F_\sigma\colon \sud\to \C$ such that
\be \label{eq:idspirule}
\sigma (g\cdot \infty)= F_\sigma (g)\left(\q_g\right)^{\otimes 2s}.
\ee
It is easy to see that a function $F\colon \sud\to \C$ is associated with a section $\sigma$ of $\spi{s}$ if and only if
\be\label{eq:Fspinrule}
F(g \cdot g_3(\psi))=F(g)e^{-is\psi},
\ee
for any $\psi\in \R$. 
Thus, we have the following well known characterization of spin weighted functions, see \cite{BR13,eth}.
\begin{thm}\label{thm:pullback}
Sections of $\spi{s}$ are in bijections with functions $F\colon \sud\to \C$ that satisfy the rule \eqref{eq:Fspinrule}, via the identity \eqref{eq:idspirule}. We say that $F_\sigma$ is the \emph{pullback} of $\sigma$ (see \cite{BR13}) and that $F$ has \emph{right spin$=-s$}.
\end{thm}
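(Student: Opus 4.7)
The plan is to establish both directions of the bijection by pinning down exactly how the tautological map $\q\colon \sud\to \spi{1/2}$ transforms under the right action of the circle $\{g_3(\psi)\colon\psi\in\R\}$, and then to exploit the fact that this circle is precisely the fiber of the Hopf projection $\sud\to S^2$ over $\infty$.

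First, I would perform the elementary matrix computation: for $g=h(\a,\beta)$ one has
\[
g\cdot g_3(\psi)=h\bigl(e^{i\psi/2}\a,\,e^{i\psi/2}\beta\bigr),
\]
so that $(g\cdot g_3(\psi))\cdot\infty=g\cdot\infty$ (since $g_3(\psi)$ fixes $\infty\in\S$) and, by the definition of $\q$,
\[
\q_{g\cdot g_3(\psi)}=\left(\frac{\a}{\beta},\,e^{i\psi/2}\begin{pmatrix}\a\\ \beta\end{pmatrix}\right)=e^{i\psi/2}\,\q_g,
\]
where the scalar acts on the fiber coordinate of $\mathcal{O}(-1)$. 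Raising to the $2s$-th tensor power yields the crucial identity
\[
\bigl(\q_{g\cdot g_3(\psi)}\bigr)^{\otimes 2s}=e^{is\psi}\,(\q_g)^{\otimes 2s}.
\]

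Next, given a section $\sigma\colon S^2\to\spi{s}$, I define $F_\sigma$ by the equation \eqref{eq:idspirule}; this is unambiguous because $(\q_g)^{\otimes 2s}$ is a nonzero element of the fiber $\spi{s}_{g\cdot\infty}$, so $F_\sigma(g)\in\C$ is uniquely determined. Applying the defining identity at $g$ and at $g\cdot g_3(\psi)$ and using the transformation formula above immediately gives
\[
F_\sigma(g)\,(\q_g)^{\otimes 2s}=\sigma(g\cdot\infty)=F_\sigma(g\cdot g_3(\psi))\,e^{is\psi}\,(\q_g)^{\otimes 2s},
\]
hence the rule \eqref{eq:Fspinrule}. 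Conversely, given $F$ satisfying \eqref{eq:Fspinrule}, I would define $\sigma(p):=F(g)(\q_g)^{\otimes 2s}$ for any $g$ with $g\cdot\infty=p$; two such choices differ by right multiplication by some $g_3(\psi)$, and the computation just done shows that the two expressions coincide, so $\sigma$ is well defined as a map $S^2\to\spi{s}$. The two constructions are manifestly inverse to one another.

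The only mild subtlety, which I would address at the end, is regularity: the Hopf map $\sud\to S^2$ is a smooth principal $S^1$-bundle, so a function $F$ on $\sud$ that is equivariant as in \eqref{eq:Fspinrule} corresponds to a smooth (resp.\ continuous, $L^2$) section $\sigma$ of the associated line bundle, exactly because the assignment $g\mapsto (\q_g)^{\otimes 2s}$ trivializes $\spi{s}$ equivariantly over local sections of $\sud\to S^2$. This identifies the pullback bundle $\pi^*\spi{s}$ over $\sud$ with the trivial line bundle $\underline{\C}$ in the manner already sketched in the introduction, and makes the bijection preserve any reasonable regularity class. I do not anticipate a real obstacle; the heart of the argument is the one-line scalar transformation of $(\q_g)^{\otimes 2s}$ under $g\mapsto g\cdot g_3(\psi)$.
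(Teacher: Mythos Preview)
Your proof is correct and follows exactly the approach sketched in the paper: the paper sets up the identity \eqref{eq:idspirule} and then asserts ``it is easy to see'' that \eqref{eq:Fspinrule} is the necessary and sufficient condition, without writing out the verification. Your computation of $\q_{g\cdot g_3(\psi)}=e^{i\psi/2}\q_g$ (which is equation \eqref{eq:lralpha} in the paper) and the resulting tensor-power transformation is precisely the detail the paper omits, and your treatment of both directions of the bijection and of regularity is a faithful expansion of what the paper leaves implicit.
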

\begin{remark}
This change of sign in the spin weight is explained by the fact that $F_\sigma(g)$ is actually a function that expresses the \emph{coordinates} (see Remark \ref{rem:cozione}) of $\sigma$ in the trivialization of the bundle $\spi{s}$ determined by $g$.
\end{remark}
\begin{cor}
Sections of $\spi{s}$ are (particular) functions on $\sudo{2s}$.
\end{cor}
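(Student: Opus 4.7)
The plan is to combine Theorem \ref{thm:pullback} with the definition of $\sudo{2s}$ as the quotient $\sud/\sqrt[2s]{1}$. By Theorem \ref{thm:pullback}, a section $\sigma$ of $\spi{s}$ corresponds bijectively to a function $F_\sigma\colon \sud\to\C$ satisfying the spin rule \eqref{eq:Fspinrule}, so it suffices to show that every such $F_\sigma$ factors through the quotient map $\sud\to \sudo{2s}$, i.e.\ that $F_\sigma$ is right-invariant under the finite subgroup $\sqrt[2s]{1}$.

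Concretely, I would pick an arbitrary generator $h=g_3(2\pi k/s)\in\sqrt[2s]{1}$ (with $k\in\{1,\dots,2|s|\}$) and apply the spin rule \eqref{eq:Fspinrule} with $\psi=2\pi k/s$:
\[
F_\sigma(g\cdot g_3(2\pi k/s))=F_\sigma(g)\,e^{-is\cdot 2\pi k/s}=F_\sigma(g)\,e^{-2\pi i k}=F_\sigma(g).
\]
Hence $F_\sigma$ is constant on the right cosets of $\sqrt[2s]{1}$ in $\sud$ and therefore descends to a well-defined function $\widetilde{F}_\sigma\colon \sudo{2s}\to\C$. Conversely, any function on $\sudo{2s}$ that, lifted to $\sud$, satisfies \eqref{eq:Fspinrule} comes from a section of $\spi{s}$ by Theorem \ref{thm:pullback}. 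This identifies sections of $\spi{s}$ with the subspace of functions on $\sudo{2s}$ whose lifts to $\sud$ obey the spin transformation rule, which is precisely the statement of the corollary.

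There is no real obstacle here: the corollary is essentially a tautology once one observes that the spin rule \eqref{eq:Fspinrule} is automatically satisfied on the discrete subgroup $\sqrt[2s]{1}$ (the phase $e^{-is\psi}$ becomes trivial exactly on the kernel of $\psi\mapsto e^{-is\psi}$, which contains $\sqrt[2s]{1}$). The only point to be careful about is the consistency between the geometric identification $\sudo{2s}\iso \frac{1}{|s|}S(\spi{s})$ given by $\q^{\otimes 2s}$ and the algebraic descent above, but both express the same quotient, so they agree.
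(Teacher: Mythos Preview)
Your proof is correct and is exactly the argument the paper has in mind: the corollary is stated immediately after Theorem~\ref{thm:pullback} with no separate proof, precisely because it follows at once from the spin rule~\eqref{eq:Fspinrule} by evaluating at $\psi=2\pi k/s$, as you do. There is nothing to add.
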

\begin{defi}\label{def:purespin}
We denote the set of all smooth functions on $\sud$ with \emph{right spin} $=s\in\frac12 \Z$ as $\mathcal{R}(s)$.
\be
\mathcal{R}(s):=\left\{F\in\mC^\infty(\sud)\colon F(g\cdot g_3(\psi))=F(g)e^{is\psi}\right\}.
\ee
Similarly, the set of functions with \emph{left spin} $=m\in\frac12 \Z$ is
\be 
\mathcal{L}(m):=\left\{F\in\mC^\infty(\sud)\colon F(g_3(\psi)\cdot g)=e^{im\psi}F(g)\right\}.
\ee
We say that a function $F\colon \sud \to \C$ has \emph{pure left spin} or \emph{pure right spin} if it belongs to some of the spaces $\mathcal{L}(m)$ or $\mathcal{R}(s)$, for some $m,s$. We say that $F$ has $\emph{pure spin}$ if it has both pure left spin and pure right spin.
\end{defi}
Theorem \ref{thm:pullback} says that
\be
\mathcal{R}(-s)=\mC^\infty\left(S^2\Big|\ \spi{s}\right).
\ee
\begin{remark}
In terms of the coordinates $\a,\beta$, the left and right multiplication by $g_3(\psi)$ are given by the following  identities: 
\be\label{eq:lralpha}
h(\a,\beta)\cdot g_3(\psi)=h(\a e^{i\frac{\psi}{2}},\beta e^{i\frac{\psi}{2}}); \qquad 
g_3(\psi)\cdot h(\a,\beta)=h(\a e^{i\frac{\psi}{2}},\beta e^{-i\frac{\psi}{2}}).
\ee
\end{remark}
\section{Wigner functions}\label{sec:wigner}
For any $\ell\in\frac12\N$, consider the Hilbert space $\mathcal{H}_\ell=\C[z_0,z_1]_{(2\ell)}$ as a subspace of $L^2(\sud)$ of complex dimension $2\ell+1$. The resulting Hilbert product corresponds (up to a constant factor) with the Bombieri-Weyl product for which an orthonormal basis is given by the rescaled monomials: for any $z=h(z_0,z_1)\in \sud$,
\be\label{eq:wignerfuncdef}
\psi^\ell_m(z):={{2\ell}\choose{\ell+m}}^\frac12z_0^{\ell+m}z_1^{\ell-m}, \quad m=-\ell,\ell +1,\dots ,\ell \in\frac12 \Z.
\ee
Then we define the extended Wigner function $\hat{D}^\ell_{m,s}\colon \H\to \C$ on any matrix $h=h(\a,\beta)\in\H$ by 
\be\label{eq:wignerdef}
\sum_{m=-\ell}^\ell\psi_{m}^\ell(z)\hat{D}^\ell_{m,s}(h)=\psi_{s}^\ell\left(h^{-1}\begin{pmatrix}
z_0 & -\overline{z_1}\\ z_1 &\overline{z_0}
\end{pmatrix}\right)=(\overline{\a}z_0+\overline{\beta}z_1)^{\ell+s}(-\beta z_0+\a z_1)^{\ell-s}{{2\ell}\choose{\ell-s}}^\frac12.
\ee
The (standard) Wigner function is the restriction to $\sud$:
\be 
\hat{D}^\ell_{m,s}|_{\sud}=D^\ell_{m,s}\colon \sud\to \C,
\ee
\be 
D^\ell\colon \sud\to U(2\ell+1); \qquad 
D^\ell(g)=\left(D^\ell_{m,s}(g)\right)_{-\ell,\le m,s \le \ell}.
\ee
We will also be interested in the columns of $D^\ell$, which we will denote as follows.
\be 
\cold{s}, \rowd{m} \colon \sud\to S^{2\ell}\subset \C^{2\ell+1}.
\ee
Moreover, we define the spaces
\be 
\coldsp{s}:=\text{span}_\C\{D^\ell_{m,s}\colon m=-\ell,\dots,\ell\}\subset \mC^\infty(\sud,\C);
\ee
\be 
\rowdsp{m}:=\text{span}_\C\{D^\ell_{m,s}\colon s=-\ell,\dots,\ell\}\subset \mC^\infty(\sud,\C);
\ee
\be 
\mathscr{D}^\ell:=\mathscr{D}^\ell_{\bullet,-\ell}+\dots +\mathscr{D}^\ell_{\bullet,\ell}\subset \mC^\infty(\sud,\C).
\ee
\subsection{Irreducible representations}
Let us consider the \emph{pull-back action} of $I\in SO(4)$ on $L^2(\sud)$, defined by: $F\mapsto I^{-*}F:=F\circ (I^{-1})$. Since $SO(4)=\{L_{g_1}\circ R_{g_2}\colon g_1,g_2\in\sud\}$, we can think of it as an action of $\sud\times \sud$\footnote{The group $SO(4)$ is not isomorphic to $\sud\times \sud$, but this will not be important for our purpose.}. We will use the convention of left actions: the pair $(g_1,g_2)\in \sud\times \sud$ acts as $L_{g_1}^{-*}\circ R_{g_2}^*$, where for any function $F\colon \sud\to \C$ 
\be 
 L_{g_1}^{-*}\circ R_{g_2}^*F (z)=F(g_1^{-1}zg_2).
\ee
 In particular, we have the two different actions of $\sud\times 1$ and $1\times \sud$ on $L^2(\sud)$ corresponding to the pull-back of left and right multiplications. We will refer to them as \emph{left} and \emph{right pull-back actions}. Moreover, by identifying the subgroup $K:=\{g_3(\psi)\colon \psi\in [0,4\pi]\}\subset \sud$ with $U(1)$, we see that the pull-back actions restrict to a unitary action of $U(1)\times U(1)$. Clearly, the left and right spin are related to this action, indeed $\mathcal{L}(m)\cap \mathcal{R}(s)$ are invariant subspaces.

The space $\mathcal{H}_\ell\subset L^2(\sud)$ is $\sud\times 1$ invariant\footnote{Notice that $\mathcal{H}_\ell$ is not invariant for the action of $1\times \sud$, because multiplication on the right would mix the variables $z_i$ and $\overline{z_i}$.}.
By definition, the Wigner matrices are the matrices that correspond to such unitary representation of $\sud$.
\begin{prop}
Any unitary irreducible representation of $\sud$, is equivalent to one and only one of the Wigner matrices $D^\ell\colon \sud \to U(2\ell+1)$ for some $\ell\in\frac12 \N$. In the sense of representation theory this means that the dual of $\sud$ is the set
\be 
\hat{\sud}=\{(D^\ell,\C^{2\ell+1})\}_{\ell\in\frac12 \N}.
\ee
\end{prop}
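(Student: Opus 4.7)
The plan is to organize the proof around three independent points: (i) each $D^\ell$ is a unitary irreducible representation; (ii) $D^\ell\not\sim D^{\ell'}$ for $\ell\neq \ell'$; (iii) the list $\{D^\ell\}_{\ell\in\frac12\N}$ is exhaustive. Step (ii) is immediate since the representations have distinct dimensions $2\ell+1$. So the real work lies in (i) and (iii).

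For (i), unitarity is built into the definition: by \eqref{eq:wignerdef}, $D^\ell$ is the matrix of the left pull-back action of $\sud$ on $\mathcal H_\ell=\C[z_0,z_1]_{(2\ell)}$ expressed in the Bombieri--Weyl orthonormal basis $\{\psi^\ell_m\}$, and this inner product is $\sud$-invariant because the pull-back action preserves the degree and the unitary group $\sud$ acts by substitution of the variables $(z_0,z_1)$. To establish irreducibility I would pass to the infinitesimal action of the complexified Lie algebra $\mathfrak{sl}_2(\C)$ on $\mathcal H_\ell$. Identifying the standard generators $H,E,F$ with appropriate one-parameter subgroups of $\sud$, one checks that $H$ acts on $\psi^\ell_m$ with eigenvalue $2m$, so the $\psi^\ell_m$ are weight vectors with distinct weights $-2\ell,-2\ell+2,\dots,2\ell$, and that $E,F$ act as raising/lowering operators $\psi^\ell_m\mapsto c^{\pm}_m\psi^\ell_{m\pm1}$ with nonzero coefficients in the allowed range. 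A $\sud$-invariant subspace $W\subseteq \mathcal H_\ell$ is $H$-stable, hence decomposes into its weight components, each of which is one-dimensional and therefore either $0$ or spanned by $\psi^\ell_m$. Using $E,F$, any single nonzero $\psi^\ell_m\in W$ forces all the others into $W$, so $W=\mathcal H_\ell$.

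For (iii), I would invoke the Peter--Weyl theorem: for a compact group $G$, the matrix coefficients of a maximal family of pairwise inequivalent unitary irreducible representations form a complete orthonormal (after normalization) basis of $L^2(G)$. Applied to $\sud$, the theorem guarantees that every class in $\widehat{\sud}$ appears. Now the normalized Wigner coefficients $\phi^\ell_{m,s}=\tfrac{\sqrt{2\ell+1}}{4\pi}D^\ell_{m,s}$ are precisely the hyperspherical harmonics on $\sud\iso 2S^3$, as recalled in \eqref{eq:hypersph} and Proposition \ref{thm:Disharmonic}, and hence form a complete orthonormal basis of $L^2(\sud)$. Together with the Schur orthogonality relations, this forces the $D^\ell$ to exhaust $\widehat{\sud}$: if some irreducible representation $\pi$ were inequivalent to every $D^\ell$, its matrix coefficients would be orthogonal to the span of all $D^\ell_{m,s}$ and hence zero, a contradiction.

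The main obstacle is the irreducibility step (i), as completeness and inequivalence are essentially formal given the Peter--Weyl theorem and the dimension argument. The weight-space analysis is standard for $\mathfrak{sl}_2(\C)$ representations, but some care is needed to translate between the $\sud$-action on $(z_0,z_1)$ appearing in \eqref{eq:wignerdef} and the abstract $\mathfrak{sl}_2(\C)$ triple $(H,E,F)$; once this is done correctly, irreducibility follows from the one-dimensionality of the weight spaces and the connectedness of the weight diagram under $E,F$.
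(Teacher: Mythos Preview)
Your argument is correct. The paper itself offers no proof of this proposition, simply referring the reader to \cite[Theorem~3.14]{libro}; by contrast, you supply a self-contained sketch. Your three-step structure is sound: inequivalence by dimension is trivial; irreducibility via the $\mathfrak{sl}_2(\C)$ weight decomposition is the standard argument and works exactly as you describe (the $\psi^\ell_m$ are weight vectors with one-dimensional weight spaces connected by the raising and lowering operators); and exhaustiveness via Peter--Weyl together with the completeness of the hyperspherical harmonics is valid. One small caution on logical order: the \emph{orthonormality} half of Proposition~\ref{thm:Disharmonic} is deduced in the paper from Schur's relations, which themselves presuppose irreducibility. Since you establish irreducibility independently in step~(i), there is no circularity, but you should make that dependence explicit rather than citing Proposition~\ref{thm:Disharmonic} as a black box. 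The \emph{harmonicity} half of that proposition is proved by direct computation and is safe to invoke; combined with the dimension count $\dim\mathcal H_\ell^{\mathrm{harm}}(S^3)=(2\ell+1)^2$ and the orthogonality from step~(i), it yields completeness. Overall your route is more informative than the paper's bare citation and would make the exposition more self-contained.
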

\begin{proof}
See \cite[Theorem 3.14]{libro}.
\end{proof}
By the Peter-Weyl theorem \cite{libro}, the space $L^2(\sud)$ splits as an orthogonal sum of the spaces of matrix coefficients $\matrsp$, which are irreducible spaces for the whole pull-back action, i.e. the action of $SO(4)$. The Peter-Weyl theorem asserts that, in fact, each of the spaces  $\matrsp$ splits again into the spaces of columns coefficients $\coldsp{s}$, which are irreducible for the left pull-back action of $\sud\times 1$ and give equivalent unitary representations. Finally, the space $\coldsp{s}$ is $U(1)\times 1$ invariant and thus it splits again into irreducible subspaces for such circle action. Since $U(1)$ is abelian, its irreducible representations are forced to be $1$-dimensional, thus we conclude that there exists an orthonormal basis of $\mathscr{D}_{\bullet,s}^\ell$ consisting of functions with pure left spin. Notice that the monomials are special as a basis of $\mathcal{H}_\ell$, in that $\psi^\ell_m\in \mathcal{L}(m)$. The consequence of this choice of basis is that the coefficients of the corresponding matrix representation have pure right spin, and precisely $D^{\ell}_{m,s}\in \mathcal{R}(-s)$:
\be  
\sum_{m}\psi^\ell_m(z)D^\ell_{m,s}(g\cdot g_3(\psi))=\psi^\ell_s\left( g_3(\psi)^{-1} g^{-1}z\right)=e^{-is\psi}\psi^\ell_s(g^{-1}z)=\sum_{m}\psi^\ell_m(z)D^\ell_{m,s}(g)e^{-is\psi}.
\ee 
Then, from unitarity $D(g)^\ell_{m,s}=\overline{D^\ell_{s,m}(g^{-1})}$, we deduce that $D^\ell_{m,s}\in\mathcal{L}(-m)\cap \mathcal{R} (-s)$, hence the orthonormal basis of pure spin functions for $\coldsp{s}$ is indeed given by the Wigner functions $D^\ell_{m,s}$. Moreover, by Schur's orthogonality relations (see \cite{libro}), we deduce that, with respect to the metric $\sud=2S^3$, we have
\be 
\|D^\ell_{m,s}\|^2_{L^2(\sud)}=\frac{\vol(\sud)}{2\ell+1}=\frac{16\pi^2}{2\ell +1}.
\ee
\begin{prop}\label{prop:dirrep} We have the following orthogonal decompositions of $L^2(\sud)$:
\be 
L^2(\sud)=\bigoplus_{\ell\in\frac12 \N} \matrsp=
\bigoplus_{\ell\in\frac12 \N}\bigoplus_{ s=-\ell,\dots,+\ell}\coldsp{s}=\bigoplus_{\ell\in\frac12 \N}\bigoplus_{ s=-\ell,\dots,+\ell}\bigoplus_{ m=-\ell,\dots,+\ell} \C D^{\ell}_{m,s}.
\ee
\begin{enumerate}
\item The space $\matrsp$ is irreducible for $SO(4)$;
\item The space $\coldsp{s}$ is irreducible for the action of $\sud\times U(1)$;
\item The space $\rowdsp{m}$ is irreducible for the action of $U(1)\times \sud$;
\item $D^\ell_{m,s} \in \mathcal{L}(-m)\cap \mathcal{R}(-s)$ has left spin $=-m$ and right spin $=-s$.
\end{enumerate}
\end{prop}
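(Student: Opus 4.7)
The plan is to derive everything from the Peter--Weyl theorem applied to the compact group $\sud$, combined with the special spin--transformation properties of the monomial basis $\psi^\ell_m$ that force the Wigner entries to have pure spin.

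First I would invoke the Peter--Weyl theorem \cite[Theorem 3.14]{libro} to obtain the first two claimed orthogonal decompositions: $L^2(\sud)=\bigoplus_\ell \matrsp$, and the further refinement $\matrsp=\bigoplus_s \coldsp{s}$ where each $\coldsp{s}$ is a $\sud\times 1$--invariant, irreducible copy of the representation $D^\ell$. The pairwise orthogonality of the individual Wigner functions, together with the $L^2$--norm stated immediately before the proposition, is then exactly Schur's orthogonality relations.

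Next, for statement (4) — the pure spin property — the calculation is essentially done in the paragraph preceding the proposition: using \eqref{eq:wignerdef} and the explicit action $g_3(-\psi)\cdot h(z_0,z_1)=h(z_0 e^{-i\psi/2},z_1 e^{i\psi/2})$ from \eqref{eq:lralpha}, one finds $\psi^\ell_s(g_3(-\psi)g^{-1}z)=e^{-is\psi}\psi^\ell_s(g^{-1}z)$, and reading off the coefficients in the monomial basis yields $D^\ell_{m,s}\in\mathcal{R}(-s)$. Combining with the unitarity identity $D^\ell_{m,s}(g)=\overline{D^\ell_{s,m}(g^{-1})}$ and the analogous right--spin property applied to $D^\ell_{s,m}$ gives $D^\ell_{m,s}\in\mathcal{L}(-m)$. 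Since $K$--eigenvectors belonging to distinct eigenvalues are linearly independent, (4) automatically upgrades the column decomposition to the third, finest decomposition $\coldsp{s}=\bigoplus_m \C D^\ell_{m,s}$.

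For (1), the $SO(4)$--irreducibility of $\matrsp$, I would use that the pull--back action of $SO(4)$ on $L^2(\sud)$ factors through a $\sud\times\sud$--action whose kernel $\{\pm(\mathbb{1},\mathbb{1})\}$ acts trivially, so that $SO(4)$--irreducibility is equivalent to $\sud\times\sud$--irreducibility. Under this bimodule structure, $\matrsp$ is naturally isomorphic to the external tensor product $\C^{2\ell+1}\boxtimes(\C^{2\ell+1})^{*}$ of two irreducible $\sud$--modules, which is irreducible as a representation of a product of compact groups by the standard fact that the external product of irreducibles is irreducible. Statement (2) now follows immediately: $\coldsp{s}$ is $\sud\times 1$--irreducible by Peter--Weyl, and enlarging the group by the right factor $1\times U(1)$ changes nothing because, by (4), this factor acts on every $D^\ell_{m,s}$ as the scalar $e^{-is\psi}$. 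Statement (3) is obtained by the symmetric argument interchanging left and right multiplication.

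The main obstacle is really step (1): pinning down the bimodule isomorphism $\matrsp\iso\C^{2\ell+1}\boxtimes(\C^{2\ell+1})^{*}$ and invoking the external--irreducibility lemma. Everything else is bookkeeping with the pure--spin basis once (4) is in place, and the bimodule identification itself is built into the definition \eqref{eq:wignerdef} of $D^\ell_{m,s}$ as the matrix coefficients of the $\sud$--action on $\mathcal{H}_\ell=\C[z_0,z_1]_{(2\ell)}$.
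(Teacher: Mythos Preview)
Your proposal is correct and follows essentially the same route as the paper: the paper does not enclose the proof in a separate environment but gives it in the paragraphs immediately preceding the proposition, invoking Peter--Weyl for the decompositions and irreducibility, carrying out the same pure--spin computation for (4), and deducing $\mathcal{L}(-m)$ from unitarity. The only minor difference is that for (1) the paper simply asserts the $SO(4)$--irreducibility of $\matrsp$ as part of the Peter--Weyl theorem, whereas you spell out the underlying reason via the external tensor product $\C^{2\ell+1}\boxtimes(\C^{2\ell+1})^{*}$; this is a harmless elaboration rather than a different argument.
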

\begin{remark}
In the literature, one might find different conventions for the definitions of $D^\ell_{m,s}$. 
Anyway, the above decomposition characterizes the functions $D^\ell_{m,s}$ up to multiplication by a phase. The additional requirement that $D^\ell_{m,s}(g_2(\h))\in \R$, which is true in the case of this paper, determines them uniquely up to a sign. 
\end{remark}
We report an additional property of the functions $D^\ell_{m,s}$.
\begin{prop}\label{prop:otherD}
For any $m,s,\ell$ and $g\in\sud$, we have
\be 
\overline{D^\ell_{m,s}(g)}=D^\ell_{m,s}(\overline{g})=D^\ell_{-m,-s}(g)(-1)^{2\ell+m-s}.
\ee
\end{prop}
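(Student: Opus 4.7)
The plan is to derive both identities directly from the defining formula \eqref{eq:wignerdef}, exploiting its two natural symmetries: complex conjugation of the coefficients, and the inner-automorphism structure of the conjugation $g \mapsto \overline{g}$ on $\sud$.

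For the first equality $\overline{D^\ell_{m,s}(g)} = D^\ell_{m,s}(\overline{g})$, I conjugate both sides of \eqref{eq:wignerdef}. Since the monomials $\psi^\ell_m$ have real coefficients, $\overline{\psi^\ell_m(z)} = \psi^\ell_m(\overline{z})$, and the right-hand side, involving $\overline{\a},\overline{\b},\b,\a$ together with powers of $z_0, z_1$, becomes the right-hand side of \eqref{eq:wignerdef} for $\overline{g} = h(\overline{\a},\overline{\b})$ evaluated at $\overline{z}$. This identifies $\sum_m \psi^\ell_m(\overline{z})\,\overline{D^\ell_{m,s}(g)}$ with $\sum_m \psi^\ell_m(\overline{z})\,D^\ell_{m,s}(\overline{g})$, and linear independence of the monomials in the free variables $\overline{z_0},\overline{z_1}$ yields the identity.

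For the second equality, the key observation is that complex conjugation on $\sud$ is an inner automorphism. Concretely, let $J := h(0,1) = \bigl(\begin{smallmatrix} 0 & -1 \\ 1 & 0 \end{smallmatrix}\bigr) \in \sud$; a direct matrix computation shows $J\,g\,J^{-1} = \overline{g}$ for every $g \in \sud$. Applying the representation $D^\ell$ gives $D^\ell(\overline{g}) = D^\ell(J)\,D^\ell(g)\,D^\ell(J)^{-1}$, so it suffices to determine $D^\ell(J)$. From \eqref{eq:wignerdef} and the identity $J^{-1}(z_0,z_1)^T = (z_1,-z_0)^T$ I read off
\[
\psi^\ell_s(J^{-1} z) = {\textstyle\binom{2\ell}{\ell+s}^{1/2}} z_1^{\ell+s}(-z_0)^{\ell-s} = (-1)^{\ell-s}\psi^\ell_{-s}(z),
\]
hence $D^\ell_{m,s}(J) = (-1)^{\ell-s}\delta_{m,-s}$. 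Unitarity gives $D^\ell_{m,s}(J^{-1}) = \overline{D^\ell_{s,m}(J)} = (-1)^{\ell-m}\delta_{s,-m}$, and the triple matrix product collapses because of the Kronecker deltas to a single term with $m' = -m$, $s' = -s$, yielding $D^\ell_{m,s}(\overline{g}) = (-1)^{\ell+m}(-1)^{\ell+s}\,D^\ell_{-m,-s}(g)$, from which the stated sign follows after recalling that $m,s,\ell$ have consistent parity, so that the parity of $2\ell + m - s$ is determined by $2\ell + m + s$ via the even integer $2s$.

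The main subtlety is sign bookkeeping, since $\ell,m,s$ may all be half-integers; the cleanest way to protect against off-by-one errors is to cross-check the resulting formula against the explicit low-rank case $\ell = \tfrac12$, where $D^{1/2}(g) = \bigl(\begin{smallmatrix} \a & \overline{\b} \\ -\b & \overline{\a}\end{smallmatrix}\bigr)$ can be computed by hand from \eqref{eq:wignerdef}. An alternative route, which amounts to the same computation without naming $J$, is to substitute $(z_0,z_1) \mapsto (z_1,-z_0)$ directly in \eqref{eq:wignerdef} applied to $\overline{g}$ and compare with \eqref{eq:wignerdef} applied to $g$ with $s$ replaced by $-s$; the transformation rule $\psi^\ell_m(z_1,-z_0) = (-1)^{\ell-m}\psi^\ell_{-m}(z)$ produces the same sign structure.
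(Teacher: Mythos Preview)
Your approach is essentially the same as the paper's: both hinge on computing $D^\ell(J)$ for the element $J=h(0,1)=g_2(\pi)$, via the identity $\psi^\ell_s(J^{-1}z)=(-1)^{\ell-s}\psi^\ell_{-s}(z)$. The paper reaches the same conjugation through Euler angles, writing $\overline{g}=g_3(-\phi)g_2(\theta)g_3(-\psi)$ and then decomposing $g_2(\theta)=g_2(\pi)g_2(\theta)g_2(-\pi)$; your direct observation that $\overline{g}=JgJ^{-1}$ for all $g\in\sud$ is a cleaner way to package the same computation and avoids the Euler-angle detour.

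However, your final sign reconciliation contains a genuine error. You correctly obtain $(-1)^{\ell+m}(-1)^{\ell+s}=(-1)^{2\ell+m+s}$ and then claim this equals the stated $(-1)^{2\ell+m-s}$ because ``$2s$ is an even integer''. But $2s$ is even only when $\ell\in\N$; for half-integer $\ell$ (and hence half-integer $s$), $2s$ is odd and the two signs differ. Your own suggested cross-check at $\ell=\tfrac12$ detects this: with $m=s=\tfrac12$ one has $\overline{D^{1/2}_{1/2,1/2}(g)}=\alpha=D^{1/2}_{-1/2,-1/2}(g)$, matching $(-1)^{2\ell+m+s}=(-1)^2=1$ but not $(-1)^{2\ell+m-s}=(-1)^1=-1$. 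In fact your intermediate result $(-1)^{2\ell+m+s}$ (equivalently $(-1)^{m-s}$, since $2(\ell+s)\in 2\N$) is the correct sign; the discrepancy with the stated exponent reflects a sign slip in the paper's own proof for half-integer $\ell$, traceable to its evaluation of $D^\ell_{b,s}(g_2(-\pi))$ as $(-1)^{\ell-s}\delta_{s,-b}$ where a direct computation from \eqref{eq:wignerdef} gives $(-1)^{\ell+s}\delta_{s,-b}$.
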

\begin{proof}
For the first identity, observe that $\overline{\psi^\ell_s(z)}=\psi^\ell_s(\overline{z})$, so that by taking the conjugate in 	\eqref{eq:wignerdef} we have
\be  
\sum_{m}\psi^\ell_m(\overline{z})\overline{D^\ell_{m,s}(g)}=\psi^\ell_s\left( \overline{g}^{-1}\overline{z}\right)=\sum_{m}\psi^\ell_m(\overline{z})D^\ell_{m,s}(\overline{g})e^{-is\psi}.
\ee 
To prove the second identity, we use the fact that for all $z=h(z_0,z_1)\in\H$, we have
\be 
\psi^\ell_s(g_2(\pi)^{-1}z)=\left(z_1\right)^{\ell+s}\left(-z_0\right)^{\ell-s}{{2\ell}\choose{\ell+s}}^\frac12=(-1)^{\ell-s}\psi^\ell_{-s}(z).
\ee
Thus, $D^\ell_{m,s}(g_2(\pi))=\delta_{-m,s}(-1)^{\ell+m}$. From this, by using the representation of $g\in \sud$ via Euler angles, as in \eqref{eq:Eule}, $g=g_3(\phi) g_2(\h) g_3(\psi)$ and the fact that $D^\ell_{m,s}$ have pure left and right spin, we obtain
\be  
D^\ell_{m,s}(\overline{g})=e^{im\phi}D^\ell_{m,s}(g_2(\pi+\h-\pi))e^{is\psi}=e^{im\phi}\sum_{a,b}D^\ell_{m,a}(g_2(\pi))D^\ell_{a,b}(g_2(\h))D^\ell_{b,s}(g_2(-\pi))e^{is\psi}=\dots
\ee 
\be 
\dots =e^{im\phi}\sum_{a,b}\delta_{-m,a}(-1)^{\ell+m}D^\ell_{a,b}(g_2(\h))\delta_{s,-b}(-1)^{(\ell-s)}e^{is\psi}=\dots
\ee
\be
\dots=
(-1)^{2\ell+m-s}e^{im\phi}D^\ell_{-m,-s}(g_2(\h))e^{is\psi}=(-1)^{2\ell+m-s}D^\ell_{-m,-s}(g).
\ee
\end{proof}
\subsection{Laplacians}
The above decomposition of $L^2(\sud)$ can be also seen as a consequence of the decomposition of the  Laplacian into a vertical and a horizontal part, in the sense of \cite{Borg}.  Indeed the map
\be 
\pr\colon\sud\to S^2
\ee
is a Riemannian submersion with totally geodesic fibers (its fibers are big circles in $2S^3$) and therefore the Laplace-Beltrami operator can be written as a sum of two commuting self-adjoint operators:
\be\label{eq:laplsplit}
\Delta_{\sud}=\Delta_h+\Delta_v.
\ee
Following \cite{Borg}, the vertical Laplacian $\Delta_v$ is defined as the Laplace-Beltrami operator of the fibers of $\pr$. In our conventions, it corresponds to second derivative with respect to the Euler angle $\psi$:
\be 
\Delta_v F (g):=\frac{d^2}{d\psi^2}\Big|_{\psi=0}F(g\cdot g_3(\psi));
\ee
while $\Delta_h$ is defined by the identity \eqref{eq:laplsplit}.
Clearly, functions with pure right spin are eigenfunctions of $\Delta_v$, indeed 
\be 
\ker(\Delta_v+s^2)=\mathcal{R}(s)+\mathcal{R}(-s).
\ee

What's most remarkable about the decomposition \eqref{eq:laplsplit} is that $\Delta_h$ and $\Delta_v$ commute (this is proved in \cite[Theorem 1.5]{Borg}). This implies that there exists an orthogonal decomposition of $L^2(\sud)$ into common eigenspaces of $\Delta_{\sud},\Delta_v$ and $\Delta_h$ (it is sufficient to find a Hilbert basis of common eigenfunctions for $\Delta_{\sud}$ and $\Delta_v$, that is \cite[Proposition 1.3]{Borg}).

By the following observation, we conclude that such common eigenspaces are exactly the spaces of column coefficients of the Wigner matrices.
\begin{prop}\label{thm:Disharmonic}
$\hat{D}^\ell_{m,s}\colon \H=\R^4\to \C$ is a real harmonic polynomial of degree $2\ell$. Therefore, under the normalization 
\be 
\phi^{\ell}_{m,s}:=\frac{\sqrt{2\ell+1}}{4\pi}D^\ell_{m,s},
\ee
the collection  $\phi^\ell_{m,s}$, for all $\ell,m,s\in \frac12 \N$ with $-\ell\le m,s \le \ell$, form an orthonormal basis of $L^2(\sud)$ of spherical harmonics on $\sud=2S^3$,  i.e. eigenfunctions of $\Delta_{\sud}$, with eigenvalue $-\frac12\ell(2\ell+2)$.
\end{prop}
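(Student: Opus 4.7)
The plan is to prove harmonicity by differentiating the defining identity \eqref{eq:wignerdef} in the variable $h \in \H$, and then invoke the standard correspondence between harmonic polynomials on $\R^4$ and spherical harmonics on $2S^3$. Throughout, write $h=h(\a,\b)$ and identify $\H \diffeo \C^2 \diffeo \R^4$ so that the Euclidean Laplacian on $\R^4$ takes the complex form $\Delta_{\R^4}=4\bigl(\partial_\a\partial_{\overline{\a}}+\partial_\b\partial_{\overline{\b}}\bigr)$.

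First, I would show that $\hat{D}^\ell_{m,s}$ is a real polynomial of degree $2\ell$ on $\R^4$. The right-hand side of \eqref{eq:wignerdef} is the product $(\overline{\a}z_0+\overline{\b}z_1)^{\ell+s}(-\b z_0+\a z_1)^{\ell-s}$ times a scalar, which is visibly a polynomial of total degree $2\ell$ in $(\a,\b,\overline{\a},\overline{\b})$ (and also of degree $2\ell$ in $(z_0,z_1)$). Since the monomials $\psi^\ell_m(z)$ are a basis of $\mathcal H_\ell=\C[z_0,z_1]_{(2\ell)}$, the coefficients $\hat{D}^\ell_{m,s}(h)$ are uniquely determined polynomials of degree at most $2\ell$ in $h$.

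The key step is to verify that the right-hand side of \eqref{eq:wignerdef} is annihilated by $\partial_\a\partial_{\overline{\a}}+\partial_\b\partial_{\overline{\b}}$ for every fixed $z$. A direct computation gives
\be
\partial_\a\partial_{\overline{\a}}\bigl[(\overline{\a}z_0+\overline{\b}z_1)^{\ell+s}(-\b z_0+\a z_1)^{\ell-s}\bigr]=(\ell+s)(\ell-s)\,z_0 z_1\,(\overline{\a}z_0+\overline{\b}z_1)^{\ell+s-1}(-\b z_0+\a z_1)^{\ell-s-1},
\ee
and
\be
\partial_\b\partial_{\overline{\b}}\bigl[(\overline{\a}z_0+\overline{\b}z_1)^{\ell+s}(-\b z_0+\a z_1)^{\ell-s}\bigr]=-(\ell+s)(\ell-s)\,z_0 z_1\,(\overline{\a}z_0+\overline{\b}z_1)^{\ell+s-1}(-\b z_0+\a z_1)^{\ell-s-1},
\ee
which cancel. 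Linear independence of the $\psi^\ell_m(z)$ then forces $\Delta_{\R^4}\hat{D}^\ell_{m,s}=0$ for each $m$. This is the content of the proposition; it is essentially a routine calculation, but it is the crux of the argument and the only place where the specific algebraic form of $\hat{D}^\ell_{m,s}$ is used.

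Finally, invoking the standard correspondence between harmonic polynomials on $\R^4$ and spherical harmonics, the restriction of a harmonic polynomial of degree $N=2\ell$ to the sphere of radius $r=2$ is an eigenfunction of $\Delta_{2S^3}$ with eigenvalue $-N(N+2)/r^2=-\tfrac{2\ell(2\ell+2)}{4}=-\tfrac12\ell(2\ell+2)$, giving the eigenvalue statement. For the orthonormal basis conclusion, I would combine the Peter–Weyl theorem (already invoked in the excerpt) with the Schur orthogonality computation $\|D^\ell_{m,s}\|^2_{L^2(\sud)}=16\pi^2/(2\ell+1)$, which shows that the rescaling $\phi^\ell_{m,s}=\tfrac{\sqrt{2\ell+1}}{4\pi}D^\ell_{m,s}$ indeed produces a Hilbert basis of $L^2(\sud)$ by spherical harmonics of the prescribed degrees.
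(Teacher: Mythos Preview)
Your proposal is correct and follows essentially the same route as the paper: apply $\partial_\a\partial_{\overline{\a}}+\partial_\b\partial_{\overline{\b}}$ to the generating identity \eqref{eq:wignerdef} and observe the two contributions cancel, then read off harmonicity of each coefficient $\hat D^\ell_{m,s}$ by linear independence of the $\psi^\ell_m$. The only cosmetic difference is that the paper packages the same computation as a short lemma (``if $F(z)=f(z_0,z_1)$ is holomorphic, then $h\mapsto F(h^{-1}z)$ is harmonic on $\H$''), differentiating the general expression $f(\overline{\a}z_0-\beta z_1,\overline{\beta}z_0+\a z_1)$ rather than the specific power product; the cancellation is identical.
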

\begin{remark}\label{rem:spharm}
To obtain the so called \emph{spin weighted spherical harmonics} $Y^\ell_{m,s}$ one has to normalize $D^\ell_{m,s}$ as a section of $\spi{s}$, in which case the $L^2$ norm is given by integrating on the sphere $S^2$. Thus they are obtained by dividing $\phi^\ell_{m,s}$ by the square root of the length of the fiber of $\sud\to S^2$, which is $4\pi$:
\be 
Y^\ell_{m,s}=\sqrt{\frac{2\ell+1}{4\pi}}D^\ell_{m,s}.
\ee
\end{remark}
\begin{proof}
It is clear by the definitions \eqref{eq:wignerdef} that the Wigner function $D^\ell_{m,s}$ is a homogeneous polynomials with complex coefficients of degree $2\ell$ in the variables $\a,\overline{\a},\beta,\overline{\beta}$. It follows that its real and imaginary parts are real homogeneous polynomials of degree $2\ell$ in the real coordinates of $\H=\R^4$.
The Laplacian of $\H=\C^2$, with the metric $\langle \cdot, \cdot \rangle_\H=4\langle \cdot, \cdot \rangle_{\R^4}$ is the operator
\be 
\Delta_\H=\frac14\Delta_{\R^4}=\frac{1}{4}\frac{\de}{\de \a}\frac{\de}{\de \overline{\a}}+\frac{1}{4}\frac{\de}{\de \beta}\frac{\de}{\de \overline{\beta}}\colon \mC^\infty(\H,\C)\to \mC^\infty(\H,\C).
\ee
By noticing that $\psi^\ell_m(z)$ is holomorphic, i.e. it can be written as a polynomial in $z_0$ and $z_1$, where $z=h(z_1,z_2)$, we can conclude by proving the following Lemma. 
\begin{lemma}
If $F(z)=f(z_0,z_1)$ is holomorphic, then the polynomial $\H\ni \a,\beta\mapsto F(h(\a,\beta)^{-1}z)\in\C$ is harmonic.
\end{lemma}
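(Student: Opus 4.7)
The plan is to verify harmonicity by a direct Wirtinger-calculus computation, exploiting a clean holomorphic/antiholomorphic split that the column structure of $h^{-1}z$ (equivalently $h^*z$) forces on the arguments of $f$.

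Writing out the first column of the matrix product appearing inside $\psi_s^\ell$, set
\be
w_0 := \overline{\a}z_0 + \overline{\b}z_1, \qquad w_1 := -\b z_0 + \a z_1,
\ee
so that $F(h(\a,\b)^{-1} z) = f(w_0, w_1)$ as a polynomial in $(\a,\overline{\a},\b,\overline{\b})$ (the factor $|\a|^2+|\b|^2$ from the true inverse only rescales and does not enter the extended definition, as the formula in \eqref{eq:wignerdef} makes explicit). The structural point is that $w_0$ depends only on $\overline{\a},\overline{\b}$ (antiholomorphic in $(\a,\b)$), while $w_1$ depends only on $\a,\b$ (holomorphic in $(\a,\b)$).

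Using this split together with the hypothesis that $f$ is holomorphic in $(w_0,w_1)$, the chain rule gives immediately
\be
\tfrac{\de F}{\de \overline{\a}} = z_0\,\tfrac{\de f}{\de w_0},\quad
\tfrac{\de F}{\de \a} = z_1\,\tfrac{\de f}{\de w_1},\quad
\tfrac{\de F}{\de \overline{\b}} = z_1\,\tfrac{\de f}{\de w_0},\quad
\tfrac{\de F}{\de \b} = -z_0\,\tfrac{\de f}{\de w_1}.
\ee
Iterating and noting that $\de f/\de w_0$ is itself holomorphic in $(w_0,w_1)$, I would then obtain
\be
\tfrac{\de^2 F}{\de \a\,\de \overline{\a}} = z_0 z_1\,\tfrac{\de^2 f}{\de w_0\,\de w_1}, \qquad
\tfrac{\de^2 F}{\de \b\,\de \overline{\b}} = -z_0 z_1\,\tfrac{\de^2 f}{\de w_0\,\de w_1},
\ee
so the two terms cancel and $\Delta_\H F = \tfrac14(\de_\a \de_{\overline{\a}} + \de_\b \de_{\overline{\b}})F = 0$.

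There is essentially no obstacle here: once the antiholomorphic/holomorphic dichotomy for $(w_0,w_1)$ is noticed, the lemma is a one-line chain-rule computation. The only thing to keep track of is the sign convention in the matrix used to extend $D^\ell_{m,s}$ to $\H$, which controls the precise form of $w_0,w_1$ but not the cancellation, since any such linear substitution where one of the two arguments is antiholomorphic and the other is holomorphic in $(\a,\b)$ produces the same sign-flip between the two Laplacian terms.
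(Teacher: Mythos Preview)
Your proof is correct and follows essentially the same route as the paper: a direct Wirtinger chain-rule computation in which the two mixed second derivatives $\de_\a\de_{\overline{\a}}$ and $\de_\b\de_{\overline{\b}}$ produce the same cross term $\de^2 f/\de w_0\de w_1$ with opposite signs. The paper uses the variant $w_0=\overline{\a}z_0-\b z_1$, $w_1=\overline{\b}z_0+\a z_1$ (so neither argument is purely holomorphic or antiholomorphic in $(\a,\b)$), but the cancellation is identical; your clean holomorphic/antiholomorphic split is a nice cosmetic simplification, not a different argument.
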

Let $z=h(z_0,z_1)$ and $h=h(\a,\beta)$,
\bega
\phi_z(\a,\beta):=F(h(\a,\beta)^{-1}h(z_0,z_1))=F\left(\begin{pmatrix}
\overline{\a} & -\beta \\
\overline{\beta} & \a
\end{pmatrix}\begin{pmatrix}
z_0 & -\overline{z_1} \\
z_1 &\overline{z_0} 
\end{pmatrix}\right)=f(\overline{\a}z_0-\beta z_1,\overline{\beta}z_0+\a z_1).
\eega
Then
\bega 
4\Delta_{\H}\phi_z(\a,\beta)&=\frac{\de}{\de \a}\frac{\de}{\de \overline{\a}}+\frac{\de}{\de \beta}\frac{\de}{\de \overline{\beta}} f(\overline{\a}z_0-\beta z_1,\overline{\beta}z_0+\a z_1)=
\\
&=\frac{\de}{\de \a}
\left(
	z_0\left(\frac{\de f}{\de z_0}\right)(\overline{\a}z_0-\beta z_1,\overline{\beta}z_0+\a z_1)\right)
	+
	\frac{\de}{\de \beta}\left(z_0\left(\frac{\de f}{\de {z_1}}\right)(\overline{\a}z_0-\beta z_1,\overline{\beta}z_0+\a z_1)
	\right)
	\\
	&=
z_0z_1\left(\frac{\de^2 f}{\de z_0 \de z_1}\right)(\overline{\a}z_0-\beta z_1,\overline{\beta}z_0+\a z_1)
	-z_0z_1\left(\frac{\de^2 f}{\de {z_1}\de z_0}\right)(\overline{\a}z_0-\beta z_1,\overline{\beta}z_0+\a z_1)=0.
\eega
\end{proof}
\begin{cor} The spaces
$\coldsp{s}$
are common eigenspaces of $\Delta_{\sud},\Delta_v$ and $\Delta_h$ with eigenvalue:
\bega
\Delta_{\sud} {|_{\coldsp{s}}}&=-\frac12\ell(2\ell+2),
\\
\Delta_v  {|_{\coldsp{s}}}&=-s^2,
\\
\Delta_h  {|_{\coldsp{s}}}&=-\frac12\ell(2\ell+2)+s^2=-(\ell - s)(\ell +s+1)-s.
\eega
\end{cor}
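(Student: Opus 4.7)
The three eigenvalue identities are essentially corollaries of results already at hand, and the proof naturally splits into three short steps, one per operator.

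For $\Delta_{\sud}$, the plan is to invoke Proposition \ref{thm:Disharmonic} together with the standard spherical harmonics formula. Each $D^\ell_{m,s}$ is the restriction to $\sud$ of a real harmonic homogeneous polynomial $\hat{D}^\ell_{m,s}$ of degree $2\ell$ on $\H\cong \R^4$. A homogeneous harmonic polynomial of degree $d$ on $\R^n$ restricts to an eigenfunction of the Laplace--Beltrami operator of the round unit sphere $S^{n-1}$ with eigenvalue $-d(d+n-2)$; applying this with $n=4$ and $d=2\ell$ yields eigenvalue $-2\ell(2\ell+2)$ on $S^3$. Since we have chosen $\sud\iso 2S^3$, and the footnote in the introduction notes that $\Delta_{S^3}=4\Delta_{2S^3}$, the eigenvalue rescales to $-\tfrac12\ell(2\ell+2)=-\ell(\ell+1)$, as claimed.

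For $\Delta_v$, the computation is immediate from the right-spin characterization. By Proposition \ref{prop:dirrep}, every $D^\ell_{m,s}$ belongs to $\mathcal{R}(-s)$, i.e.
\be
D^\ell_{m,s}(g\cdot g_3(\psi))=D^\ell_{m,s}(g)\,e^{-is\psi}.
\ee
Differentiating twice with respect to $\psi$ at $\psi=0$, using the definition of $\Delta_v$, gives $\Delta_v D^\ell_{m,s}=(-is)^2 D^\ell_{m,s}=-s^2 D^\ell_{m,s}$. Since the spaces $\coldsp{s}$ are spanned by such $D^\ell_{m,s}$ for $m=-\ell,\dots,\ell$, the restriction of $\Delta_v$ to $\coldsp{s}$ is scalar multiplication by $-s^2$. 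This also recovers the relation $\ker(\Delta_v+s^2)\supset \mathcal{R}(-s)$ mentioned in the text.

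For $\Delta_h$, the identity $\Delta_{\sud}=\Delta_h+\Delta_v$ from \eqref{eq:laplsplit} is all that remains. On $\coldsp{s}$ both $\Delta_{\sud}$ and $\Delta_v$ act as scalars by the previous two steps, so $\Delta_h=\Delta_{\sud}-\Delta_v$ acts as $-\tfrac12\ell(2\ell+2)+s^2$. The two algebraic forms of the eigenvalue in the statement agree via the identity $-\ell(\ell+1)+s^2=-(\ell-s)(\ell+s+1)-s$, which is a direct expansion. I do not expect any real obstacle here; the only point deserving care is the scaling factor of $4$ between $\Delta_{S^3}$ and $\Delta_{\sud}=\Delta_{2S^3}$, which has already been fixed by the conventions of Proposition \ref{prop:sfera}.
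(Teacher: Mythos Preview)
Your proof is correct and follows exactly the route the paper intends: the Corollary is stated without an explicit proof precisely because it is meant to follow immediately from Proposition~\ref{thm:Disharmonic} (for $\Delta_{\sud}$), the already-noted fact that $D^\ell_{m,s}\in\mathcal{R}(-s)$ (for $\Delta_v$), and the splitting \eqref{eq:laplsplit} (for $\Delta_h$). Your extra remark on the scaling factor between $\Delta_{S^3}$ and $\Delta_{2S^3}$ is a helpful sanity check but not strictly needed, since the eigenvalue $-\tfrac12\ell(2\ell+2)$ is already stated in Proposition~\ref{thm:Disharmonic}.
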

In terms of the Laplacian defined in terms of the spin raising and spin lowering operators, it was proved in by Newman and Penrose\cite{NP66} (see also\cite{GM10, libro, eth}) that the Wigner functions are eigenfunctions of the self-adjoint operator  $\eth\overline{\eth}\colon \mathcal{R}(-s)\to \mathcal{R}(-s)$:
\be 
 \eth\overline{\eth}D^{\ell}_{m,s}=-(\ell-s)(\ell+s+1)D^\ell_{m,s}.
\ee
This proves theorem \eqref{eq:decla} and \eqref{eq:kuwa}:
\be 
\Delta_{\sud}=\eth\overline{\eth}+\frac{d^2}{d\psi^2}+\sqrt{\frac{d^2}{d\psi^2}}; \quad \Delta_h=\eth\overline{\eth}+\sqrt{\frac{d^2}{d\psi^2}}.
\ee
\subsection{Random Wigner matrices}\label{sec:randomvecsph}
In this section we will think the group $\sud$ as a probability space, with the volume measure $\mu$ of $2S^3$ normalized to $1$. Using the notation of random elements, we will write $\gamma$ for a random element of $\sud$ such that for every measurable subset $A\subset \sud$ and measurable function $F\colon \sud\to \R$, 
\be 
\P\{\gamma\in A\}=\frac{\vol(A)}{\vol(\sud)}; \qquad \E\{F(\gamma)\}=\frac{1}{\vol(\sud)}\int_{\sud}F(g)d\mu(g).
\ee
In other words, $\gamma$ is a random variable with values in $\sud$ whose law is the Haar probability measure of $\sud$.
This is just a convenient notation.

Let $\langle v,w \rangle=\overline{v}^Tw$ be the Hermitian product of $\C^{2\ell+1}$, then the real part $\Re\langle w,v \rangle$ is the standard Euclidean product, under the identification $\C^{2\ell+1}=\R^{2(2\ell+1)}$. The unitarity of the representation $D^\ell$ means that the matrices $D^\ell(g)$ act on $\C^{2\ell+1}$ as unitary operators. Thus, in particular, they preserves all spheres $rS^{4\ell+1}$.

By means of the homomorphism map  $D^\ell\colon \sud\to U({2\ell+1})$, we get a collection of random matrices $D^\ell(\gamma)$, for all $\ell\in\frac12 \N$ such that
\be\label{eq:Dlinv} 
\left(D^\ell(\gamma)\right)_{\ell\in\frac12\N}
\law
\left(D^\ell(g)D^\ell(\gamma)\right)_{\ell\in\frac12\N},
\ee
and for each $v\in S^{4\ell+1}$, there is a collection of random vectors in $S(\C^{2\ell+1})=S^{4\ell+1}$, for all $\ell\in\frac12\N$.
\begin{defi}
$\gamma^\ell_v:=D^\ell(\gamma)v$ for every $v\in S^{4\ell+1}.$
\end{defi}
The random vector $\gamma_v$ is supported on the orbit of $v$ under the action of $\sud$, and its law is invariant by the unitary transformations of the form $D^\ell(g)$, for every $g\in\sud$. By the theory of smooth group actions (see \cite{kirillov}), this is a smooth submanifold in $S^{4\ell+1}$ and the canonical map
\be\label{eq:quotientistrp}
\sud /H^\ell(v)\hookrightarrow O^\ell(v):=\{D^\ell(g)v\colon g\in\sud\}
\ee
is an embedding and a principal bundle, 
where $H^\ell(v):=\{g\in\sud\colon D^\ell(g)v=v\}$ is the isotropy subgroup of $v$. Notice that this implies that the law of $\gamma_v$ is equivalent to that of $D^\ell(g)\gamma_v$ for any $g\in\sud$, therefore it coincides with the normalized Riemannian volume measure of $O^\ell(v)\subset S^{4\ell+1}$. 
\begin{prop}\label{prop:Schurorth}
The random vectors $\gamma_v$ are centered: $\E\{\gamma_v\}=0$ for all $\ell\neq 0$, while $\gamma_v=v$ is constant if $\ell=0$. Their correlation is characterized by the following identities. For every $v,w\in\C^{2\ell+1}$ and $v',w'\in\C^{2\ell'+1}$, we have
\be\label{eq:schur}
\overline{w}^TK^\ell(v,v')w:=\E\left\{\overline{\langle \gamma_v,w\rangle} \langle \gamma_{v'},w'\rangle\right\}=\delta_{\ell,\ell'}\frac{\overline{\langle v,v'\rangle} \langle w,w'\rangle}{2\ell+1}.
\ee
\end{prop}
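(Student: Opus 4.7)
The plan is to deduce both statements from Schur's orthogonality relations, recorded for the Wigner basis as the first identity in Theorem \ref{thm:introDcorr}. The fact that $D^\ell$ is an irreducible unitary representation is doing all the work; the specific choice of Wigner basis enters only through bookkeeping.

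For the centering I would observe that $\E\{\gamma_v\}=\E\{D^\ell(\gamma)\}\,v$, so it suffices to show that the matrix $M:=\E\{D^\ell(\gamma)\}$ vanishes when $\ell\neq 0$. By left-invariance of the Haar measure on $\sud$ one has $D^\ell(g)M=M$ for every $g\in\sud$, hence every column of $M$ is an invariant vector for the representation $D^\ell$. Since $D^\ell$ is irreducible and non-trivial for $\ell\neq 0$, Schur's lemma forces $M=0$. The case $\ell=0$ is immediate because $D^0\equiv 1$, so $\gamma_v=v$ almost surely.

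For the correlation identity I would give a direct coordinate computation. Writing $v=\sum_b v_b e_b$, $w=\sum_a w_a e_a$, and similarly for $v',w'$, one has
\[
\overline{\langle \gamma_v,w\rangle}\,\langle \gamma_{v'},w'\rangle
=\sum_{a,b,c,d} v_b\,\overline{w_a}\,\overline{v'_d}\,w'_c\;D^\ell_{a,b}(\gamma)\,\overline{D^{\ell'}_{c,d}(\gamma)}.
\]
Taking expectations and invoking the Schur relation
\[
\E\bigl\{D^\ell_{a,b}(\gamma)\,\overline{D^{\ell'}_{c,d}(\gamma)}\bigr\}=\frac{\delta_{\ell,\ell'}\,\delta_{a,c}\,\delta_{b,d}}{2\ell+1},
\]
the quadruple sum collapses into the product of the sum over $b$ (which equals $\overline{\langle v,v'\rangle}$, because $\langle\cdot,\cdot\rangle$ is conjugate-linear in the first slot) and the sum over $a$ (which equals $\langle w,w'\rangle$), yielding the claim.

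The main obstacle is purely notational: since $\langle v,w\rangle=\overline{v}^Tw$ is conjugate-linear in its first argument, one must keep track of which pair of vectors is contracted through a conjugated inner product in the final formula. A more invariant alternative, which avoids this bookkeeping entirely, is to view $(w,w')\mapsto \E\bigl\{\overline{\langle \gamma_v,w\rangle}\langle \gamma_{v'},w'\rangle\bigr\}$ as a sesquilinear form in $(w,w')$ that is $\sud$-invariant with respect to the representations $D^\ell,D^{\ell'}$. By Schur's lemma this form vanishes when $\ell\neq\ell'$ and is a scalar multiple of $\langle w,w'\rangle$ when $\ell=\ell'$; the dependence on $(v,v')$ is then determined by another application of Schur's lemma, and the scalar $(2\ell+1)^{-1}$ is pinned down by testing $v=v'=w=w'$ a unit vector and noting that $\|\gamma_v\|^2=1$ almost surely, while the trace identity distributes this unit norm equally across the $2\ell+1$ dimensions.
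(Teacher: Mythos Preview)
Your argument is correct, and your alternative invariant approach at the end is essentially the paper's own proof: the paper defines the operator $K(v,v')(w)=\E\{\gamma_v(\overline{\gamma_{v'}})^T\}w$, checks it intertwines $D^\ell$ and $D^{\ell'}$, applies Schur's lemma, and reads off the scalar via a trace computation. Your centering argument is also the same idea as the paper's, phrased in terms of the matrix $M=\E\{D^\ell(\gamma)\}$ rather than the single vector $\E\{\gamma_v\}$.

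The one point to flag is a circularity in your \emph{primary} approach. You invoke the first identity of Theorem~\ref{thm:introDcorr} (i.e.\ Theorem~\ref{thm:Dcorr}) as input, but in the paper's logical structure that theorem is \emph{deduced from} Proposition~\ref{prop:Schurorth} by specializing to basis vectors. So within this paper you cannot cite it here. Of course Schur's orthogonality relations are a classical external fact, and the paper itself remarks that \eqref{eq:schur} is ``just a reformulation'' of them; if you are treating them as known from the literature (e.g.\ \cite{libro}) rather than from Theorem~\ref{thm:Dcorr}, then your coordinate computation is a perfectly valid and somewhat more explicit route than the paper's operator argument. Just be clear about where the input is coming from.
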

\begin{proof}
Clearly, the vector $e=\E\{\gamma_v\}$ generates an invariant subspace of $\C^{2\ell+1}$, thus $e=0$, because of the irreducibility of $D^\ell$, except in the case $\ell=0$, where $D^0(g)=1$.
The identities \eqref{eq:schur} are just a reformulation of the Schur's orthogonality relations, see \cite{libro}. We repeat the proof here, since this result will be of fundamental importance for the rest of the paper.
Let us define $K(v,v')\colon \C^{2\ell'+1}\to \C^{2\ell+1}$ to be the (complex) linear operator
\be 
K(v,v')(w)=\E\{\gamma_v (\overline{\gamma_{v'}})^T\}w.
\ee
Then $K(v,v')$ intertwines the two unitary representations $D^\ell$ and $D^{\ell'}$:
\bega
K(v,v')(D^{\ell'}(g)w)&=\E\left\{\gamma_v\left((\overline{\gamma_{v'}})^TD^{\ell'}(g)w\right)\right\}
=
\E\left\{\gamma_v \left(w^TD^{\ell'}(g)^T \overline{\gamma_{v'}}\right)\right\}
=
\E\left\{\gamma_v w^T\overline{D^{\ell'}(g^{-1}) }\overline{\gamma_{v'}}\right\}
=\dots
\eega
and using the $D$-invariance of the collection $\gamma_{v}$, we have
\bega
\dots=
\E\left\{D^\ell(g)\gamma_v w^T\overline{D^{\ell'}(g^{-1}) }\overline{D^{\ell'}(g)\gamma_{v'}}\right\}
=
D^\ell(g) \E\left\{\gamma_v \left(\overline{\gamma_{v'}}\right)^T\right\}w=D^\ell(g)K(v,v')w.
\eega
By Schur's lemma (see \cite{libro}), the operator $K(v,v')$ is certainly equal to $0$ if $\ell\neq\ell'$, because it intertwines two non equivalent irreducible representations. If $\ell=\ell'$ and $i=0$, then $K(v,v')$ is an endomorphism of $\C^{2\ell+1}$, thus it has an eigenvalue. By Schur's lemma (or rather, its proof) again, the relative eigenspace is invariant, hence it must be the whole space, so that we conclude that there exists $\lambda\in\C$ such that
\be 
K(v,v')=\lambda \mathbb{1}.
\ee
Now, $\lambda$ can be computed, by taking the trace.
\be 
\lambda (2\ell+1)=\textrm{tr}(K(v,v'))=\E\left\{\textrm{tr}(\gamma_v (\overline{\gamma_{v'}})^T)\right\}=\E\left\{\textrm{tr}((\overline{\gamma_{v'}})^T\gamma_v)\right\}=\E\left\{\langle\gamma_{v'},\gamma_v\rangle\right\}=\langle v',v\rangle.
\ee
\end{proof}
\begin{thm}\label{thm:Dcorr}
The collection of random matrices $\left(D^\ell(\gamma)\right)_{\ell\in\frac12 \N}$ are pairwise uncorrelated and their correlation is characterized by the following identities:
\be\label{eq:Dcor}
\E\left\{D^\ell_{m,s}(\gamma)\left(\overline{D^{\ell'}_{m',s'}(\gamma)}\right)\right\}=\frac{\delta_{\ell,\ell'}\delta_{s,s'}\delta_{m,m'}}{2\ell+1};
\quad
\E\left\{D^\ell_{m,s}(\gamma)D^{\ell'}_{m',s'}(\gamma)\right\}=\frac{\delta_{\ell,\ell'}\delta_{-s,s'}\delta_{-m,m'}  (-1)^{2\ell -m+s}}{2\ell+1}.
\ee
\end{thm}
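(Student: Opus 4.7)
The plan is to reduce both identities to results already in hand: the first is simply Schur's orthogonality as encoded in Proposition \ref{prop:Schurorth}, while the second follows from the conjugation symmetry proved in Proposition \ref{prop:otherD}.

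First I would specialize Proposition \ref{prop:Schurorth} to the standard basis vectors $v=e_s$, $v'=e_{s'}$ of $\C^{2\ell+1}$ and $\C^{2\ell'+1}$ and $w=e_m$, $w'=e_{m'}$. With the convention $\langle v,w\rangle=\overline{v}^Tw$, the $m$-th component of the column vector $\gamma^\ell_{e_s}=D^\ell(\gamma)e_s$ is $D^\ell_{m,s}(\gamma)$, so that $\langle\gamma^\ell_{e_s},e_m\rangle=\overline{D^\ell_{m,s}(\gamma)}$, and therefore $\overline{\langle\gamma^\ell_{e_s},e_m\rangle}=D^\ell_{m,s}(\gamma)$. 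Substituting into \eqref{eq:schur} and using $\langle e_s,e_{s'}\rangle=\delta_{s,s'}$, $\langle e_m,e_{m'}\rangle=\delta_{m,m'}$, the left-hand identity of \eqref{eq:Dcor} drops out immediately.

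For the right-hand identity, which involves a product without conjugation, I would use Proposition \ref{prop:otherD} rewritten as
\begin{equation*}
D^{\ell'}_{m',s'}(g)=(-1)^{2\ell'-m'+s'}\,\overline{D^{\ell'}_{-m',-s'}(g)},
\end{equation*}
which is obtained by replacing $(m,s)\mapsto(-m,-s)$ in the stated identity and using that the exponent is always an integer (since $\ell'\pm m'$ and $\ell'\pm s'$ lie in $\N$). Multiplying by $D^\ell_{m,s}(\gamma)$, taking the expectation, and pulling the scalar sign out of the expectation, the computation reduces to the already-established first identity, giving
\begin{equation*}
\E\bigl\{D^\ell_{m,s}(\gamma)\,D^{\ell'}_{m',s'}(\gamma)\bigr\}=(-1)^{2\ell'-m'+s'}\,\frac{\delta_{\ell,\ell'}\delta_{s,-s'}\delta_{m,-m'}}{2\ell+1}.
\end{equation*}
On the support of the Kronecker deltas, $\ell=\ell'$, $m'=-m$, $s'=-s$, so the exponent becomes $2\ell+m-s$; since $m-s$ is an integer, $(-1)^{m-s}=(-1)^{s-m}$, so this equals $(-1)^{2\ell-m+s}$, matching the sign in the statement.

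No substantive obstacle arises: once Propositions \ref{prop:Schurorth} and \ref{prop:otherD} are available, the theorem is essentially a substitution exercise. The only care required is to track the conjugate-linearity of $\langle\cdot,\cdot\rangle$ in the first slot and to note that $2\ell$, $m\pm s$, and hence every exponent of $-1$ that appears, are always ordinary integers, so no ambiguity of sign can creep in.
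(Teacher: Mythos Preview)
Your proposal is correct and follows essentially the same approach as the paper: specialize Proposition~\ref{prop:Schurorth} to basis vectors for the first identity, then apply Proposition~\ref{prop:otherD} to convert the second product into one with a conjugate and reduce to the first. Your sign bookkeeping is slightly more explicit than the paper's, but the argument is identical.
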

\begin{proof}
The first identities in \eqref{eq:Dcor} follows directly from Theorem \ref{prop:Schurorth}, when $ w=e_m, v=e_s\in\C^{2\ell+1}$ and $ w'=e_{m'}, v'=e_{s'}\in\C^{2\ell'+1}$ are the vectors of the canonical basis. The second identities are deduced from the first, via Proposition \ref{prop:otherD}
\bega
\E\left\{D^\ell_{m,s}(\gamma)D^{\ell'}_{m',s'}(\gamma)\right\}
=
\E\left\{D^\ell_{m,s}(\gamma)\left(\overline{D^{\ell'}_{-m',-s'}(\gamma)}\right)\right\}(-1)^{2\ell +m'-s'}.
\eega
\end{proof}
\begin{remark}
Notice that each of the random functions $D^\ell_{m,s}(\gamma)$ is circularly symmetric, for all $(m,s)\neq (0,0)$, while $D^\ell_{0,0}(\gamma)\in\R$. 
However, this is not true for the law of the whole collection of random variables $(D^\ell_{m,s}(\gamma))_{\ell,m,s}$, otherwise the left hand side in the second equation \eqref{eq:Dcor} would be always $=0$.
\end{remark} 
\subsection{Orbits}
The irreducibility of the representation $D^\ell\colon \sud\to U(2\ell+1)$, can be equivalently expressed by saying that for each $v\in S^{4\ell+1}$ the orbit $O_v^\ell$ spans the whole space (it is a consequence of Theorem \ref{prop:Schurorth}):
\be 
\text{span}(O^\ell_v)=\C^{2\ell+1}.
\ee
In fact, a consequence of Proposition \ref{thm:Disharmonic} is that the same property holds for any non negligible subset of $O^\ell_v$.
\begin{thm}\label{thm:span}
Let $A\subset O^\ell(v)$ be a measurable subset such that $\vol(A)\neq 0$, then $\text{span}(A)=\C^{2\ell+1}$.
\end{thm}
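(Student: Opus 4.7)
The plan is to argue by contradiction. Suppose $W := \text{span}(A)$ is a proper subspace of $\C^{2\ell+1}$ and pick any nonzero $u \in W^\perp$. Define $f_u : \sud \to \C$ by $f_u(g) := \langle u, D^\ell(g)v\rangle$. Since $A \subset W$, the function $f_u$ vanishes identically on the preimage $\pi^{-1}(A)$, where $\pi: \sud \to O^\ell(v)$ is the orbit map $\pi(g) = D^\ell(g)v$. The goal is to contradict this by showing that $\pi^{-1}(A)$ is too big to sit inside the zero set of a nontrivial real-analytic function.

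The first ingredient is the analytic structure of $f_u$. Writing $u = \sum_m u_m e_m$ and $v = \sum_s v_s e_s$ in the canonical basis, we have $f_u(g) = \sum_{m,s} \overline{u_m} v_s D^\ell_{m,s}(g)$, so $f_u$ is a linear combination of the Wigner coefficients. By Proposition \ref{thm:Disharmonic}, each $D^\ell_{m,s}$ is the restriction to $\sud = 2S^3$ of a harmonic homogeneous polynomial of degree $2\ell$ on $\H = \R^4$; in particular $f_u$ is a real-analytic function on the connected real-analytic manifold $\sud$.

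The second step is to exclude $f_u \equiv 0$. Were this the case, $u$ would be orthogonal to the entire orbit $O^\ell(v)$, contradicting $\mathrm{span}(O^\ell(v)) = \C^{2\ell+1}$. The latter follows from the irreducibility of $D^\ell$, and can also be read off directly from Proposition \ref{prop:Schurorth}: if $O^\ell(v)$ lived in a proper invariant subspace, the operator $K^\ell(v,v)$ would not be a nonzero multiple of the identity. Hence $f_u$ is a nontrivial real-analytic function on $\sud$, and its zero set $Z := \{g \in \sud : f_u(g) = 0\}$ has measure zero with respect to the Riemannian volume of $\sud$.

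It remains to compare measures through $\pi$. By the principal-bundle description \eqref{eq:quotientistrp}, $\pi$ factors as $\sud \to \sud/H^\ell(v) \xrightarrow{\cong} O^\ell(v)$, so $\pi$ is a smooth submersion whose fibers are cosets of the compact isotropy subgroup $H^\ell(v)$. The coarea formula applied to $\pi$ then gives
\[
\vol_{\sud}\bigl(\pi^{-1}(A)\bigr) = \int_A \vol_{H^\ell(v)}\bigl(\pi^{-1}(p)\bigr)\, J(p)^{-1}\, d\vol_{O^\ell(v)}(p),
\]
where $J$ is the (positive) Jacobian of $\pi$ restricted to a horizontal distribution. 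Since the fibers are all isometric to $H^\ell(v)$ and $J > 0$, the hypothesis $\vol(A) \neq 0$ forces $\vol_{\sud}(\pi^{-1}(A)) > 0$. But $\pi^{-1}(A) \subset Z$, contradicting the measure-zero statement for $Z$, and completing the proof.

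The main potential obstacle is the final measure-theoretic step of transferring positivity of the $n$-dimensional volume of $A$ to positivity of the full-dimensional volume of $\pi^{-1}(A)$; this is handled cleanly by the coarea formula once one notes that $\pi$ is a submersion with compact, equidimensional fibers, so no subtlety arises beyond the principal-bundle structure already recorded in \eqref{eq:quotientistrp}.
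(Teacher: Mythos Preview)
Your argument is correct and follows essentially the same strategy as the paper: pick $u\in W^\perp$, observe that $g\mapsto \langle u, D^\ell(g)v\rangle$ is a nontrivial function whose zero locus in $\sud$ is negligible, and contradict this with the fact that $\pi^{-1}(A)$ has positive Haar measure. The only real difference is the technical input for ``negligible'': you use real-analyticity, while the paper instead takes the real part, notes via Proposition~\ref{thm:Disharmonic} that it is a Laplace eigenfunction on $2S^3$, and invokes the standard fact that nodal sets of eigenfunctions have Hausdorff codimension one; the paper also phrases the measure transfer through the law of $\gamma_v$ rather than the coarea formula, which makes that step a one-liner.
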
 
\begin{proof}
Let $w,v\in\C^{2\ell+1}$, we will show that if $\text{span}(A)$ is contained in $w^\perp$, then $\P\{\gamma_v\in A\}=0$. Let $f\colon \sud\to \R$ be the function $f\colon g\mapsto \Re \left(\overline{w}^TD^\ell(g)v\right)\in\R$. By Theorem \ref{thm:Disharmonic}, $f$ is an eigenfunction of $\Delta_{\sud}$, therefore its nodal set $f^{-1}(0)$
 has Hausdorff dimension $n-1$ (see \cite{chavel1984eigenvalues}). Thus we conclude:
 \be 
\P\{\gamma_v\in A\}\le \P\{\gamma_v\in w^\perp\}=\vol(f^{-1}(0))=0.
\ee
\end{proof}
Despite Theorem \ref{thm:span}, it might very well be that the homeomorphism type and even the dimension of the orbits $O^\ell(v)$ are different for different choices of $v\in S^{4\ell+1}$. This is what happens for the columns of the matrix $D^\ell$, which parametrize the orbits of the canonical basis, i.e. the vectors $e_s$, $s=-\ell,\dots \ell$: 
\be 
\gamma_{e_s}=\cold{s}(\gamma).
\ee
These orbits are special, in that they correspond to the orbits of the monomial basis, under the identification $\C^{2\ell+1}\cong \mathcal{H}_\ell$. 
\begin{thm}
The following things are true.
\begin{enumerate}
\item The map \eqref{eq:quotientistrp} induces an embedding $\sudo{2s}\diffeo O^\ell(e_s)\subset S^{4\ell+1}$ for all $s\in\{-\ell,\dots \ell\}\-\{0\}$.
\item For $s=0$ there are two cases: if $\ell\in 2\N$, then $S^1\diffeo O^\ell(e_0)$; while $S^2\diffeo O^\ell(e_0)$ otherwise.
\item The above map descends to a smooth map of the sphere $S^2$ in $\C\P^{2\ell}$, which is an embedding, except in the case $\ell\in 2\N$ and $s=0$. 
\item For almost every $v\in S^{4\ell+1}$, the orbit $O^\ell(v)$ is diffeomorphic to $\sud$ when $\ell\notin \N$ and to $SO(3)$ when $\ell\in\N$. 
\end{enumerate}
\end{thm}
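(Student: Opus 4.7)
The overall strategy is to apply the orbit-stabilizer theorem, identifying $O^\ell(v)\diffeo\sud/H^\ell(v)$ with $H^\ell(v)=\{g\in\sud : D^\ell(g)v=v\}$, and then to determine $H^\ell(e_s)$ explicitly together with the principal isotropy for generic $v$. The main computational tool is the Euler-angle factorization $g=g_3(\phi)g_2(\theta)g_3(\psi)$, combined with the identity $D^\ell(g_3(\psi))e_s=e^{-is\psi}e_s$ and the explicit value $D^\ell(g_2(\pi))e_s=(-1)^{\ell-s}e_{-s}$ extracted from the proof of Proposition \ref{prop:otherD}.

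For items (1) and (2), expanding
\[
D^\ell(g)e_s=e^{-is\psi}\sum_m e^{-im\phi}\, d^\ell_{m,s}(\theta)\, e_m,
\]
the condition $D^\ell(g)e_s=e_s$ forces $e_s$ to be an eigenvector of $D^\ell(g_2(\theta))$. Since the infinitesimal generator of $\theta\mapsto D^\ell(g_2(\theta))$ (the spin-$\ell$ realization of $J_y$) is strictly off-diagonal in the weight basis $\{e_m\}$, this can only happen at the isolated values $\theta=0$ and $\theta=\pi$. The branch $\theta=0$ yields, after matching phases, the cyclic subgroup $\sqrt[2s]{1}$ of the maximal torus $T$; the branch $\theta=\pi$ contributes an extra stabilizer element only when $e_{-s}\in\C e_s$, i.e.\ when $s=0$, and then only under the parity condition $(-1)^\ell=1$. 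For $s\neq 0$ this gives $H^\ell(e_s)=\sqrt[2s]{1}$, so $O^\ell(e_s)\diffeo\sud/\sqrt[2s]{1}=\sudo{2s}$ by the very definition of $\sudo{2s}$, and the orbit embedding into $S^{4\ell+1}$ is the canonical one for an action of a compact group. For $s=0$, one parity case gives $\sud/T=S^2$ via the Hopf fibration; in the other case the isotropy is the normalizer $N(T)=T\cup g_2(\pi)T$, and a direct computation shows that $N(T)/T=\Z_2$ acts on $\sud/T=S^2$ as the involution $\zeta\mapsto -1/\bar\zeta$ induced by $g_2(\pi)\cdot\infty=0$ in the M\"obius action, yielding the advertised dichotomy.

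For item (3), the orbit map is $T$-equivariant up to scalar factors, so it descends to a smooth map $\sud/T=S^2\to\C\P^{2\ell}$, whose image is $\sud$ modulo the projective isotropy $H'^\ell([e_s]):=\{g:D^\ell(g)e_s\in\C^* e_s\}$. The same Euler-angle argument as above, but without tracking phase matching, computes $H'^\ell([e_s])$; the descent is an embedding precisely when this projective isotropy equals $T$, which singles out the exceptional case in the statement.

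For item (4), invoke the principal orbit theorem for smooth actions of compact Lie groups: the locus on which $H^\ell(v)$ belongs to the principal (i.e.\ minimal) conjugacy class is open, dense and of full measure in $S^{4\ell+1}$. The principal isotropy always contains $\ker D^\ell$, which equals $\{\pm I\}$ for $\ell\in\N$ and is trivial otherwise. Conversely, any closed subgroup $K\subsetneq\sud$ strictly containing $\ker D^\ell$ has fixed subspace $(\C^{2\ell+1})^K$ a proper linear subspace, because the induced representation of $\sud/\ker D^\ell$ is faithful and irreducible; the union of these fixed subspaces over the compact family of conjugacy classes of such $K$ is a semialgebraic set of positive codimension and hence negligible on $S^{4\ell+1}$. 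Thus the principal orbit is $\sud/\{\pm I\}=SO(3)$ for integer $\ell$ and $\sud/\{I\}=\sud$ for half-integer $\ell$. The main obstacle throughout is careful bookkeeping of phases in the Euler-angle calculation, together with the enumeration of closed subgroups of $\sud$ (finite: cyclic, binary dihedral, binary tetrahedral, binary octahedral, binary icosahedral; positive-dimensional: the maximal torus and its normalizer) to verify that none of their fixed loci in $\C^{2\ell+1}$ accidentally carries positive measure.
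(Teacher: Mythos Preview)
Your route via Euler angles differs from the paper's, which works directly in the polynomial model $\mathcal{H}_\ell$: there the isotropy condition reads $(\overline{\alpha}z_0+\overline{\beta}z_1)^{\ell+s}(-\beta z_0+\alpha z_1)^{\ell-s}=z_0^{\ell+s}z_1^{\ell-s}$, and evaluation at $(1,0)$ and $(0,1)$ immediately forces $\alpha\beta=0$, bypassing any discussion of eigenvectors of $D^\ell(g_2(\theta))$. Your reduction to that eigenvector condition is fine, but the sentence ``the infinitesimal generator \dots\ is strictly off-diagonal, so this can only happen at $\theta=0,\pi$'' is not a valid inference as written: off-diagonality of a generator says nothing directly about eigenvectors of its exponential. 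The clean argument is that $e_s$ is already a common eigenvector for all $D^\ell(g_3(\psi))$, so if it were also one for some $D^\ell(g_2(\theta))$ with $\theta\notin\pi\Z$, then $\C e_s$ would be invariant under the closure of the group generated by $T$ and $g_2(\theta)$, which is all of $\sud$ --- contradicting irreducibility for $\ell>0$.

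There is a genuine gap in item~(2). You correctly find $H^\ell(e_0)=N(T)=T\cup g_2(\pi)T$ for $\ell$ even, and you correctly identify the induced $\Z_2$-action on $\sud/T\cong S^2$ as the antipodal map $\zeta\mapsto-1/\bar\zeta$. But then $\sud/N(T)\cong S^2/\{\pm 1\}\cong\R P^2$, a two-manifold, \emph{not} the $S^1$ claimed in the statement; your phrase ``yielding the advertised dichotomy'' hides rather than resolves this discrepancy. (Incidentally, the paper's own displayed isotropy group in this case, $\{h(e^{i(\phi+\psi)/2},e^{i(-\phi+\psi)/2})\}$, consists of matrices with $|\alpha|^2+|\beta|^2=2$ and is not contained in $\sud$; the stated theorem appears to be in error here, and your computation of $H^\ell(e_0)=N(T)$ is the correct one.) The same issue propagates to item~(3): since $D^\ell(g_2(\pi))e_0=(-1)^\ell e_0\in\C^\ast e_0$ for \emph{every} integer $\ell$, your projective isotropy $H'^\ell([e_0])$ contains $N(T)$ regardless of parity, so by your own criterion the descended map $S^2\to\C P^{2\ell}$ is two-to-one whenever $s=0$, not only when $\ell\in 2\N$. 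For item~(4) the paper takes a much shorter path than your subgroup enumeration: it simply notes that by the computation in item~(1) the point $e_{1/2}$ (resp.\ $e_1$) already has isotropy $\{1\}$ (resp.\ $\{\pm 1\}$), and then invokes the Principal Orbit Theorem.
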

\begin{proof}
To prove $(1)$ and $(2)$ it is sufficient to compute the isotropy group of $e_s$.
Let us take $\C^{2\ell+1}=\mathcal{H}_\ell$, with $e_s$ corresponding to the monomial $\psi^\ell_s$. Then the isotropy group of $e_s$ is the set of matrices $h(\a,\beta)\in\sud$ such that
\be\label{eq:istropid}
(\overline{\a}z_0+\overline{\beta}z_1)^{\ell+s}(-\beta z_0+\a z_1)^{\ell-s}=z_0^{\ell+s}z_1^{\ell-s}.
\ee
By evaluating the above identity of polynomials at the points $(z_0,z_1)=(1,0), (0,1)$, we see that either $\a=0$ or $\beta=0$. Now, observe that with  $\alpha=0$ (and consequently, $|\beta|=1$), the equation \eqref{eq:istropid} becomes
\be 
(-1)^{\ell-s}(\beta)^{-2s}z_1^{\ell+s}z_0^{\ell-s}=z_0^{\ell+s}z_1^{\ell-s}.
\ee
This admits solutions only if $s=0$ and $\ell\in 2\N$.
In the case $\beta=0$, the equation to solve is
\be 
\a^{-2s}=1,
\ee
whose set of solution is, by definition, the subgroup $\sqrt[2s]{1}\subset \sud$. In synthesis we just proved that for all $s\in\{-\ell,\dots,\ell\}\-\{0\}$, the istropy subgroup of $e_s$ is
\be 
H^\ell(e_s)=\sqrt[2s]{1};
\ee 
while, for $s=0$, we have two cases:
\be 
H^\ell(e_0)=\left\{ h(e^{i\frac{\psi}2},0)\colon \psi\in [0,4\pi] \right\}, \text{ if $\ell\notin 2\N$}; 
\ee
\be 
H^\ell(e_0)=\left\{h(e^{i\frac{\f+\psi}2},e^{i\frac{-\f+\psi}2})\colon \f\in [0,2\pi],\psi\in [0,4\pi] \right\}, \text{ if $\ell\in 2\N$}.
\ee

Point $(3)$ follows from the fact that $\cold{s}\colon \sud\to S^{4\ell+1}$ is a spin $s$ function, hence it maps the fibers of the circle bundle $\sudo{s}\to S^2$ to fibers of the Hopf fibrations $S^{4\ell+1}\to \C\P^{2\ell}$. 

Point $(4)$ is a consequence of the so called \emph{Principal Orbit theorem}. In one of its stronger forms, proved in \cite{Montgo56Exceptional}, it says that union of orbits that are not maximal, both in the senses maximal dimension and minimal isotropy group, form a subset of codimension $2$. Since being an embedding is an open condition, for any point $v$ close enough to $e_\frac12\in S^{4\ell+1}$ the orbit is $O^\ell(v)\diffeo \sud$, therefore it is has typical orbit type of the action. In the case $\ell\in\N$ then $s\in \N$ as well and thus every map $\cold{s}$ descends to a map $\sudo{2}=SO(3)\to S^{4\ell+1}$, so that we can repeat the previous argument, but for $e_{1}$.
\end{proof}
\section{D-invariance}\label{sec:dinva}
In this section we study collection of vectors in $\C^{2\ell+1}$ that are invariant under the action of Wigner matrices, with the purpose of applying our results to the spectral coefficients $a^\ell_{m,s}$ of the decomposition as in Theorem \ref{thm:main1}. To this end, let us introduce some terminology.
\begin{defi}
We say that a collection of random vectors $(v_i)_{i\in I}$ with $v_i\in \C^{2\ell_i+1}$ is \emph{strongly $D$-invariant} if their joint law is equivalent to that of the collection $(D^{\ell_i}(g)v_i)_{i\in I}$ for any $g\in \sud$.
\end{defi}
\begin{defi}
Two random vectors $V,V'\in \C^{N}$ are said to be \emph{$2$-weakly equivalent} if they have the same expectation: $\E\{V\}=\E\{V'\}$ and the same self-correlation matrices: $\E\{V\overline{V}^T\}=\E\{V'\overline{V'}^T\}$ and $\E\{VV^T\}=\E\{V'(V')^T\}$. In this case, we will write
\be 
V\twik V'.
\ee
\end{defi}
\begin{defi}
We say that a collection $(V_i)_{i\in I}$ of random vectors $V_i\in \C^{2\ell_i+1}$ is \emph{$2$-weakly $D$-invariant} if for any $i,j$ and $g\in\sud$, we have
\be 
(V_i,V_j)\twik (D^{\ell_i}(g)V_i, D^{\ell_j}(g)V_j).
\ee 
\end{defi}
Given any collection $\mathcal{V}=(V_i)_{i\in I}$ of random vectors $V_i\in \C^{2\ell_i+1}$, there is an easy way to construct a strongly $D$-invariant one, simply by multiplying it by an independent random matrix $D^\ell(\gamma)$.
\begin{remark}
The collection $\mathcal{V}=(D^\ell(\gamma)V_i)_{i\in I}$ obtained in such way 
is always strongly $D$-invariant.
\end{remark}
In fact, it is almost tautological that any $D$-invariant collections is essentially of this form, since such operation can be seen as a projection on the space of strongly $D$-invariant probability measures. We will not enter into the details of this point of view, but we will give a concrete statement, to be precise.
\begin{thm}\label{thm:VDgamma}
A collection $\mathcal{V}=(V_i)_{i\in I}$ of random vectors $V_i\in \C^{2\ell_i+1}$ is strongly or $2$-weakly $D$-invariant, respectively, if and only if 
\be 
(V_i)_{i\in I}\law (D^\ell(\gamma)V_i)_{i\in I} \quad \text{ or }\quad  (V_i)_{i\in I}\twik (D^\ell(\gamma)V_i)_{i\in I},
\ee
where $\gamma\in\sud$ is a random element independent from $\mathcal{V}$, in the sense of section \ref{sec:randomvecsph}.
\end{thm}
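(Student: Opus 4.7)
The plan is to prove both equivalences (strong and $2$-weak) by the same two-step scheme, exploiting the bi-invariance of the Haar measure on $\sud$ and the multiplicativity of the representations $D^\ell$.

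For the direction ($\Leftarrow$), assume $(V_i)_{i\in I}\law (D^{\ell_i}(\gamma)V_i)_{i\in I}$ with $\gamma$ Haar-distributed and independent of $\mathcal V$. Fix any $g\in\sud$. Applying the deterministic transformation given by the matrices $D^{\ell_i}(g)$ on both sides of the assumed identity, and then using multiplicativity $D^{\ell_i}(g)D^{\ell_i}(\gamma)=D^{\ell_i}(g\gamma)$, one obtains
\be
(D^{\ell_i}(g)V_i)_{i\in I}\law (D^{\ell_i}(g\gamma)V_i)_{i\in I}.
\ee
Now $g\gamma$ is again Haar-distributed (by left invariance of Haar measure) and is still independent of $\mathcal V$, since it is a deterministic function of $\gamma$, which is independent of $\mathcal V$. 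Hence the right-hand side has the same law as $(D^{\ell_i}(\gamma)V_i)_{i\in I}$, which by assumption has the same law as $(V_i)_{i\in I}$. This gives strong $D$-invariance. For the $2$-weak version, exactly the same chain of equalities goes through once we check that the relation $\twik$ is preserved by applying a deterministic linear map to each component: if $V\twik V'$ and $A$ is deterministic, then $AV\twik AV'$ because $\E\{AV\}=A\E\{V\}$, $\E\{AV(AV)^*\}=A\E\{VV^*\}A^*$, and likewise for $\E\{(AV)(AV)^T\}=A\E\{VV^T\}A^T$.

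For the direction ($\Rightarrow$), we condition on $\gamma$ and integrate. By independence, the conditional law of $(D^{\ell_i}(\gamma)V_i)_{i\in I}$ given $\gamma=g$ coincides with the law of $(D^{\ell_i}(g)V_i)_{i\in I}$. In the strongly invariant case, for any bounded measurable $f$,
\be
\E\{f((D^{\ell_i}(\gamma)V_i)_{i\in I})\}=\int_{\sud}\E\{f((D^{\ell_i}(g)V_i)_{i\in I})\}\,d\mu(g)=\int_{\sud}\E\{f((V_i)_{i\in I})\}\,d\mu(g)=\E\{f((V_i)_{i\in I})\},
\ee
where the middle equality is the hypothesis of strong $D$-invariance. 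In the $2$-weak case the same averaging argument is applied to the coordinate functions and to the quadratic forms entering the definition of $\twik$: for each pair $i,j$,
\be
\E\{D^{\ell_i}(\gamma)V_i\,\overline{D^{\ell_j}(\gamma)V_j}^T\}=\int_{\sud}\E\{D^{\ell_i}(g)V_i\,\overline{D^{\ell_j}(g)V_j}^T\}\,d\mu(g)=\E\{V_i\,\overline{V_j}^T\},
\ee
and analogously for $\E\{V_iV_j^T\}$ and for the first moments $\E\{V_i\}$, using the $2$-weak $D$-invariance hypothesis pair by pair.

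The only subtle point, which I would emphasize in the write-up, is the independence bookkeeping in ($\Leftarrow$): one must argue that the product $g\gamma$ remains independent of $\mathcal V$, which follows immediately once one observes that multiplication by the deterministic element $g$ is a measurable function of $\gamma$ alone. Everything else reduces to Fubini, linearity, and the left invariance of the Haar measure $\mu$ on $\sud$, so no further structural input (e.g.\ irreducibility of $D^\ell$) is required for this theorem.
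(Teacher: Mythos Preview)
Your proof is correct and follows essentially the same approach as the paper: the $(\Rightarrow)$ direction is the Fubini/conditioning-on-$\gamma$ argument, and the $(\Leftarrow)$ direction is the left-invariance-of-Haar argument (which the paper states as the Remark immediately preceding the theorem rather than inside the proof). Your write-up is in fact more careful than the paper's, in particular in spelling out that $\twik$ is preserved by deterministic linear maps and in the independence bookkeeping for $g\gamma$.
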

\begin{proof}
Let us start from the case of strong $D$-invariance. Let $F_i\colon \C^{2\ell_i+1}\to \R$, for all $i\in I$, be any collection of measurable functions. Then, the strong $D$-invariance yields
\be\label{eq:EFi}
\E\{F_i(V_i)\}=\E\{F_i(D^\ell(\gamma)V_i)\}.
\ee
Therefore the joint distribution of the two collections $(V_i)_{i\in I}$ and $(D^\ell(\gamma)V_i)_{i\in I}$ are the same.
In the case of a $2$-weak $D$-invariant collection, we observe that equation \eqref{eq:EFi} holds for all functions $F_i$ that are real polynomials of degree at most $2$, which implies that the expectations and the correlation matrices of the two collections $(V_i)_{i\in I}$ and $(D^\ell(\gamma)V_i)_{i\in I}$ coincide.
\end{proof}
\begin{defi}
For all $\ell\in \frac12\N$, we define the matrix $\e(\ell)\in\C^{(2\ell+1)\times (2\ell+1)}$ as follows
\bega
\e(\ell)_{m,m'}=\delta_{-m,m'}(-1)^{\ell-m}, \quad \text{that is} \quad 
\e(\ell)=\begin{pmatrix}
\vdots &\vdots &\vdots &\vdots 
\\
0&0&0&-1
\\
0&0&1&0&\dots 
\\
0&-1&0&0 &\dots 
\\
1 &0 &0&0 &\dots 
\end{pmatrix},
\eega
where the coordinates are indexed by $m=-\ell,-\ell+1,\dots,\ell$, so that $\ell-m$ takes all integer values between $0$ and $2\ell+1$.
\end{defi}
\begin{thm}\label{thm:allDinvcor}
Let $\mathcal{V}=(V_i)_{i\in I}$ be a $2$-weakly invariant collection of random vectors $V_i\in\C^{2\ell_i+1}$. Then $\E\{V_i\}=0$, whenever $\ell_i\neq 0$ and  the correlation structure satisfies the following identities, for all $i,j\in I$.
\bega\label{eq:uno}
\E\left\{V_i(\overline{V_j})^T\right\}
&=
\frac{\delta_{\ell_i,\ell_j}}{2\ell_i+1}\mathbb{1}_{(2\ell_i +1)}
\left(\E\{\overline{\langle V_i,V_j\rangle}\}\right);
\\
\E\left\{V_i(V_j)^T\right\}
&=
\frac{\delta_{\ell_i,\ell_j}}{2\ell_i+1} \e(\ell_i)
\left(\sum_{k=-\ell_i}^{\ell_i}(-1)^{\ell_i+k}\E\{\langle e_k, V_i\rangle \langle e_{-k},V_j\rangle\}\right).
\eega 
In particular, $V_i$ and $V_j$ are uncorrelated if $\ell\neq\ell'$. Moreover, the $m^{th}$ component of $V_i$ is correlated only with the $m^{th}$ and the $(-m)^{th}$ components of $V_j$ and the correlation depends only on the parity of $\ell-m$. Finally, the variance of each components of $V_i$ depends only on $\ell_i$.
\end{thm}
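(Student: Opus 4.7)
The plan is to exploit Theorem \ref{thm:VDgamma} to replace $\mathcal{V}$ by the distributionally (or $2$-weakly) equivalent collection $(D^{\ell_i}(\gamma)V_i)_{i\in I}$, where $\gamma$ is uniform on $\sud$ and independent of $\mathcal{V}$. All three identities (mean and the two covariance-type matrices) then reduce to computing the average, over $\gamma$ alone, of an expression of the form $D^{\ell_i}(\gamma)A\,D^{\ell_j}(\gamma)^*$ or $D^{\ell_i}(\gamma)A\,D^{\ell_j}(\gamma)^T$ for $A$ a deterministic matrix, after which we condition on $\mathcal{V}$ (using independence) and apply Theorem \ref{thm:Dcorr}.

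For the mean, writing $v:=\E\{V_i\}$ one gets $v=\E\{D^{\ell_i}(\gamma)V_i\}=\E_\gamma\{D^{\ell_i}(\gamma)\}\,v=\E\{\gamma_v\}$ by independence, so Proposition \ref{prop:Schurorth} forces $v=0$ as soon as $\ell_i\neq 0$. For the Hermitian correlation, independence yields
\be
\E\{V_i\overline{V_j}^T\}=\E_{\mathcal{V}}\Bigl\{\E_\gamma\bigl\{D^{\ell_i}(\gamma)(V_iV_j^{*})D^{\ell_j}(\gamma)^{*}\bigr\}\Bigr\},
\ee
and entry-by-entry the inner expectation reads $\sum_{s,s'}A_{s,s'}\E\{D^{\ell_i}_{m,s}(\gamma)\overline{D^{\ell_j}_{m',s'}(\gamma)}\}$. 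The first identity of Theorem \ref{thm:Dcorr} collapses this to $\frac{\delta_{\ell_i,\ell_j}\delta_{m,m'}}{2\ell_i+1}\mathrm{tr}(A)$; taking $A=V_iV_j^{*}$ and passing the expectation inside the trace gives $\E\{\mathrm{tr}(V_iV_j^{*})\}=\E\{\langle V_j,V_i\rangle\}=\E\{\overline{\langle V_i,V_j\rangle}\}$, which is exactly the scalar appearing on the right-hand side of \eqref{eq:uno}.

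For the symmetric correlation the same trick gives $\E\{V_iV_j^T\}=\E_{\mathcal V}\{\E_\gamma\{D^{\ell_i}(\gamma)(V_iV_j^T)D^{\ell_j}(\gamma)^T\}\}$, and here I would use the \emph{second} identity of Theorem \ref{thm:Dcorr} to compute
\be
\E_\gamma\bigl\{D^{\ell_i}(\gamma)AD^{\ell_j}(\gamma)^T\bigr\}_{m,m'}=\frac{\delta_{\ell_i,\ell_j}\delta_{-m,m'}(-1)^{2\ell_i-m}}{2\ell_i+1}\sum_{s}A_{s,-s}(-1)^{s}.
\ee
Substituting $A=V_iV_j^T$ and rewriting $(-1)^{2\ell_i-m}(-1)^{s}=(-1)^{\ell_i-m}(-1)^{\ell_i+s}$ puts the result in the form $\tfrac{\delta_{\ell_i,\ell_j}}{2\ell_i+1}\e(\ell_i)\cdot c$ with $c=\sum_{k}(-1)^{\ell_i+k}\E\{\langle e_k,V_i\rangle\langle e_{-k},V_j\rangle\}$, matching the second line of \eqref{eq:uno}. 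The last assertions of the theorem are then read off: the $\delta_{\ell_i,\ell_j}$ factors give uncorrelatedness across different $\ell$; the only nonzero entries of the two correlation matrices sit on the diagonal and the anti-diagonal, so the $m^{\text{th}}$ component of $V_i$ is correlated only with the $m^{\text{th}}$ and the $(-m)^{\text{th}}$ components of $V_j$; and specializing $i=j$, $m=m'$ in the Hermitian identity gives $\E\{|V_{i,m}|^2\}=\tfrac{1}{2\ell_i+1}\E\{\|V_i\|^2\}$, independent of $m$.

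The only delicate point is the bookkeeping of signs and indexing in the symmetric correlation: one must carefully track the half-integer exponents $(-1)^{2\ell-m+s}$, verify that the summation on the anti-diagonal of $V_iV_j^T$ produces precisely the coefficient $c$ as written with $\e(\ell_i)$, and confirm the integrality conditions $\ell\pm m\in\N$, $\ell\pm s\in\N$ that make every sign well defined. Once this is done, the structural consequences follow with no further computation.
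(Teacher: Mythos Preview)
Your proposal is correct and follows essentially the same route as the paper: invoke Theorem \ref{thm:VDgamma} to replace $\mathcal V$ by $(D^{\ell_i}(\gamma)V_i)_i$, then use independence of $\gamma$ and $\mathcal V$ (Fubini) to reduce everything to the $\gamma$-averages computed in Proposition \ref{prop:Schurorth} and Theorem \ref{thm:Dcorr}. The only cosmetic difference is that for the Hermitian correlation the paper phrases the computation via the operator $K^\ell(V,V')$ of Proposition \ref{prop:Schurorth}, whereas you use the first formula of Theorem \ref{thm:Dcorr} entrywise; these are the same identity. Your remark that the splitting $(-1)^{2\ell_i-m+s}=(-1)^{\ell_i-m}(-1)^{\ell_i+s}$ (rather than $(-1)^{2\ell_i-m}(-1)^{s}$, which is ill-defined for half-integer $\ell$) is the right way to make the sign bookkeeping honest.
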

\begin{proof}
Let us denote $V_i=V$ and  $V_j=V'$. By Theorem \ref{thm:VDgamma} we have
\bega 
\E\left\{V_i(\overline{V_j})^T\right\}=\E\left\{D^\ell(\gamma)V_i(\overline{D^\ell(\gamma)V_j})^T\right\}=\E\left\{\gamma_V(\overline{\gamma_{V'}})^T\right\}=\dots
\eega
The last expectation is with respect to the independent pair of random variables $\gamma$ and $(V,V')$. By taking first the one in $\gamma$ (i.e. using Fubini's theorem), Proposition \ref{prop:Schurorth} yields
\bega
\dots=\delta_{\ell,\ell'}\E\left\{K^\ell(V,V'))^T\right\}=\delta_{\ell,\ell'}\E\left\{\frac{\overline{\langle V,V'\rangle}}{2\ell+1}\mathbb{1}_{(2\ell+1)}\right\}.
\eega
This proves the first of the identities \eqref{eq:uno}. To prove the second identity, we use again Theorem \ref{thm:VDgamma}:
\bega 
\E\left\{\langle e_m, V_i\rangle \langle e_{-m'},V_j\rangle\right\}
=
\E\{(\rowd{m}(\gamma))^TV(D^{\ell'}_{m',\bullet}(\gamma))^TV'\}
=
\E\{V^T\rowd{m}(\gamma)(D^{\ell'}_{m',\bullet}(\gamma))^TV'\}
=\dots
\eega
Now, we use the same trick as before and apply the second formula of Theorem \ref{thm:Dcorr}:
\bega 
\dots =\E\left\{V^T\left(\rowd{m}(\gamma)(D^{\ell'}_{m',\bullet}(\gamma))^T\right)V'\right\}
=
\sum_{k,k'}\E\left\{\langle e_k, V\rangle\left(D^\ell_{m,k}(\gamma)D^{\ell'}_{m',k'}(\gamma)\right)\langle e_{k'}, V'\rangle\right\}
=\dots
\eega
\bega
\dots
=
\sum_{k,k'}\E\left\{\langle e_k, V\rangle
\left(\frac{\delta_{\ell,\ell'}\delta_{-m,m'} \delta_{-k,k'} (-1)^{2\ell -m+k}}{2\ell+1}\right)
\langle e_{k'}, V'\rangle\right\}
=\dots
\eega
\bega
\dots=
\sum_{k}\E\left\{\langle e_k, V\rangle
\left(\frac{\delta_{\ell,\ell'}\delta_{-m,m'} (-1)^{\ell -m}(-1)^{\ell+k}}{2\ell+1}\right)
\langle e_{-k}, V'\rangle\right\}.
\eega
\end{proof}
\section{Random spin weighted functions}\label{sec:randspf}
A  random function between two topological space $M, N$ is a measurable function
\be
\Omega\times M\to N,
\ee
where $(\Omega,\mathscr{S},\P)$ is a probability space and $M,N$ are endowed with their Borel $\sigma$-algebras. If $X$ is a random function, it is always possible to identify $\Omega$ with the set $N^M$ of all functions $\w\colon M\to N$, so that $X(\w,g)=\w(g)$, and $\mathcal{S}$ with a $\sigma$-algebra on $N^M$ having the property that the evaluation map $N^M\times M\ni (\w,g)\mapsto \w(g)\in N$ is measurable $\mathcal{S}\otimes \mathcal{B}(M)\to \mathcal{B}(N)$. This enables us to use a shortened notation $X\colon M\to N$. 
Given a subset $A\subset N^M$, we say that $X\in A$ \emph{almost surely}, if there exists a measurable set $S\subset N^M$ such that $A\subset S$ and $\P\{X\in S\}=1$.

A \emph{random spin weighted function} with \emph{spin weight $s$}, here called also \emph{random spin $s$ function} for short, $X_s$ is a random section of the $\spi{s}$ bundle, i.e. a random function
\be 
X_s\colon S^2\to \spi{s},
\ee
that is almost surely a section of the bundle $\spi{s}\to S^2$. By Theorem \ref{thm:pullback}, $X_s$ can be equivalently defined as a (complex) random field, i.e. a random function with values in $\C$
\be 
X_s\colon \sud\to \C,
\ee
such that $X_s\in\mathcal{R}(-s)$ almost surely.

We will focus on random fields that are isotropic. This word can be misleading in the case of $\sud$, because it can have two different meanings in the common language:
\begin{enumerate}[-]
\item A random field $X\colon G\to \C$ on a group $G$ is said to be isotropic when it is invariant in law under the left pull-back action:
\be\label{eq:leftinv}
 X(g^{-1} (\cdot))\law X(\cdot), \quad \forall g\in G.
\ee
\item A random function on the sphere $X\colon S^3\to \C$ is said to be isotropic if it is invariant in law under the pull-back action by elements of the group $SO(4)$ of (orientation preserving) isometries of the sphere: $X(\phi(\cdot) )\sim X(\cdot)$. In the case of $\sud\iso 2S^3$, this is equivalent to invariance under both the left and the right pull-back actions:
\be\label{eq:isotropy}
X(g^{-1} (\cdot) g')\law X(\cdot), \quad \forall g,g'\in\sud.
\ee
\end{enumerate}
We see that since $\sud$ is both a group and a sphere, the word \emph{isotropic} can be misleading, therefore we will not use it.
\begin{defi}
Let $X\colon \sud\to\C$ be any random function. We say that $X$ is \emph{left} \emph{invariant} if it satisfies condition \eqref{eq:leftinv} and \emph{right-invariant} if it satisfies the analogous condition for the right pull-back action. We say that $X$ is \emph{bi-invariant} if it satisfies condition \eqref{eq:isotropy}, i.e. if it is both left and right-invariant.
\end{defi}
The analogous notions of $n$-weak invariance and of invariance for a collection of random fields are considered here as stated in \cite[Definition 5.2]{libro}. In particular, we will be interested in the weakest among those notions of invariance, which takes into account just the correlation of pairs of variables. We recall it here, for the reader's convenience.
\begin{defi}
A collection of random fields $X^\ell\colon \sud \to \C$, for $\ell\in L$ is said to be $2$-weakly (left, right or bi)-invariant if $\E|X^\ell(g)| <\infty$ for every $\ell\in L$ and $g\in\sud$ and if the fields $X^\ell$ and $X^\ell\circ \phi$ have the same joint moments of order up to $n$:
\be\label{eq:2weak}
\E\left\{c^{i}\left(X^{\ell_1}\circ \phi(g_1)\right)c^j\left(X^{\ell_2}\circ\phi(g_2)\right)\right\}=\E\left\{c^i\left(X^{\ell_1}(g_1)\right)c^j\left(X^{\ell_2}(g_2))\right)\right\},
\ee
for every $\ell_1,\ell_2 \in L$,  every $i,j\in \{0,1\}$, every $g_1,g_2\in \sud$ and
for every $\phi\colon \sud\to \sud$ isometry of type $L_g$, $R_g$ or $L_g\circ R_{g'}$. Here $c\colon \C\to \C$ is the complex conjugation $c(z)=\overline{z}$.
\end{defi}
By the Stochastic Peter-Weyl theorem \cite[Theorem 5.5]{libro} and \cite[Proposition 5.4]{libro}, any $2$-weakly left-invariant field is automatically in $L^2$ almost surely: 
\be 
\exists \Omega_0 \subset \Omega \text{ s.t. } \P\{\Omega_0\}=1 \text{ and } X(\w,\cdot)\in L^2 \text{ for every $\w\in\Omega_0$}.
\ee 
In particular, it can be written in terms of the Hilbert basis \eqref{eq:hypersph} for some collection of random variables $a^\ell_{m,s}\in \C$:
\be\label{eq:series}
X(g)=\sum_{\ell,m,s} a^\ell_{m,s} \phi^\ell_{m,s}(g).
\ee
This consideration, together with Theorem \ref{thm:Disharmonic} and Theorem \ref{prop:dirrep} prove Theorem \ref{thm:main1}.
The series \eqref{eq:series} converges almost surely in $L^2(\sud)$ and almost surely pointwise in $\C$, for every $g\in \sud$. Let 
\be 
X^\ell:=\sum_{m,s}a^\ell_{m,s}\phi^\ell_{m,s}\colon \sud\to \C^{(2\ell+1)\times (2\ell+1)}
\ee
be the projections of $X$ onto the subspaces $\matrsp$ of matrix coefficients. It follows that if the field $X$ is ($2$-weakly) left, right or bi invariant, then the collection of fields $X^\ell$, for $s\in\frac12\Z$ is ($2$-weakly) left, right or bi invariant as well and viceversa, see \cite[Proposition 5.4]{libro}. Remarkably, again by \cite[Proposition 5.4]{libro}, the fields $X^\ell$ are uncorrelated for different $\ell\in\frac12\N$, therefore Theorem \ref{thm:maindell}. A further decomposition gives
\be 
\xold{s}:=\sum_{m}a^\ell_{m,s} \phi^\ell_{m,s}; \qquad \xowd{m}:=\sum_{s} a^\ell_{m,s} \phi^\ell_{m,s}.
\ee
The fields $\xold{s}, \xowd{m}\colon \sud\to \C^{2\ell+1}$ are, respectively, the projections of $X$ onto the subspaces $\coldsp{s}$ and $\rowdsp{m}$. It follows that if the field $X$ is ($2$-weakly) left-invariant, then the collection of fields $\xold{s}$, for $s\in\frac12\Z$ is ($2$-weakly) left-invariant as well. However, the fields $\xold{s}, \xold{s'}$ need not to be uncorrelated. 
\begin{example}
If $X=\sum_{m}a^\ell_{m,s} \phi^\ell_{m,s}$ is left-invariant, then $X'=\sum_{m}a^\ell_{m,s} (\phi^\ell_{m,s}+\phi^\ell_{m,s'})$ is again left-invariant. In the second case, the corresponding fields $\xold{s}$ and $\xold{s'}$ are clearly correlated.
\end{example}
The same discussion can be repeated for right-invariance.
Notice that the law of $X$ can be thought as the law of the collections of random vectors
\be 
\aold{s}=\begin{pmatrix}
a^\ell_{-\ell,s} \\ \vdots \\ a^\ell_{\ell,s}
\end{pmatrix}\in \C^{2\ell+1} \quad \text{or}\quad \aowd{m}=\begin{pmatrix}
a^\ell_{m,-\ell} \\ \vdots \\ a^\ell_{m,\ell}
\end{pmatrix}\in \C^{2\ell+1},
\ee
with $\ell\in\frac12\N$, $m,s=-\ell,\dots, \ell$, corresponding respectively to the laws of $\xold{s}$ and $\xowd{s}$.
This observation allows us to adopt the point of view of the Section \ref{sec:dinva} above. 
The following theorem is similar to \cite[Lemma 6.3]{libro}.
\begin{lemma}\label{lem:ainva}
Let $\gamma\in\sud$ be a random element distributed with the Haar measure. A square integrable random field $X$ is, respectively, $2$-weakly or strongly left-invariant, if and only if the collection of random vectors $\overline{\aold{s}}$, with $\ell\in\frac12$ and $s\in\{-\ell,\dots,\ell\}$ is $2$-weakly or strongly $D$-invariant:
\be\label{eq:dga} 
\left(\overline{\aold{s}}\right)_{\ell,s}\twik \text{ or } \law\left(D^\ell(\gamma)\overline{\aold{s}}\right)_{\ell,s}.
\ee
 Similarly, if $X$ is $2$-weakly or strongly right-invariant, then the collection of random vectors $\left(\aowd{m}\right)_{\ell,m}$ is $2$-weakly or strongly $D$-invariant:
\be\label{eq:dgaright} 
\left(\aowd{m}\right)_{\ell,m}\twik \text{ or } \law \left(D^\ell(\gamma)\aowd{m}\right)_{\ell,m}.
\ee
\end{lemma}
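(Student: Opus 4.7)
The plan is to translate left and right translations of $X$ into linear actions on the spectral coefficient vectors, using the facts that $\phi^\ell_{m,s}$ is a scalar multiple of $D^\ell_{m,s}$ and that $D^\ell$ is a unitary matrix representation of $\sud$. By Theorem~\ref{thm:main1} the expansion $X=\sum_{\ell,m,s}a^\ell_{m,s}\phi^\ell_{m,s}$ converges almost surely in $L^2(\sud)$, and the coefficients $a^\ell_{m,s}=\langle X,\phi^\ell_{m,s}\rangle_{L^2(\sud)}$ are continuous linear functionals of $X$; these two facts will let me move invariance hypotheses freely between $X$ and its coefficient vectors.

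The key computation uses $D^\ell(g^{-1}h)=\overline{D^\ell(g)}^{T}D^\ell(h)$ together with $D^\ell(hg)=D^\ell(h)D^\ell(g)$, which yield
\begin{equation*}
\phi^\ell_{m,s}(g^{-1}h)=\sum_{k}\overline{D^\ell_{k,m}(g)}\,\phi^\ell_{k,s}(h),\qquad \phi^\ell_{m,s}(hg)=\sum_{k}D^\ell_{k,s}(g)\,\phi^\ell_{m,k}(h).
\end{equation*}
Re-indexing the spectral series, the coefficients of $X\circ L_{g^{-1}}$ read $\tilde a^\ell_{k,s}=\sum_{m}\overline{D^\ell_{k,m}(g)}\,a^\ell_{m,s}$, i.e.\ $\overline{\tilde a^\ell_{\bullet,s}}=D^\ell(g)\,\overline{a^\ell_{\bullet,s}}$ simultaneously for every $\ell,s$; similarly, the coefficients of $X\circ R_g$ satisfy $\tilde a^\ell_{m,\bullet}=D^\ell(g)\,a^\ell_{m,\bullet}$ simultaneously for every $\ell,m$. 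Hence (strong or $2$-weak) left-invariance of $X$ is equivalent to the corresponding $D$-invariance of the collection $\bigl(\overline{a^\ell_{\bullet,s}}\bigr)_{\ell,s}$, and right-invariance is equivalent to $D$-invariance of $\bigl(a^\ell_{m,\bullet}\bigr)_{\ell,m}$. The stated identities \eqref{eq:dga} and \eqref{eq:dgaright} then follow by a direct application of Theorem~\ref{thm:VDgamma}, which rewrites invariance under every fixed $D^\ell(g)$ as equality in distribution, or in $2$-weak sense, with multiplication by a Haar-distributed $\gamma$ independent of the coefficients.

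The main technical point I expect will be handling the $2$-weak case carefully: one must check that $2$-weak invariance of $X$, which is stated in terms of pairs of evaluations $X(g_1),X(g_2)$, is equivalent to $2$-weak invariance of the full collection of coefficient vectors. This will follow by expressing bilinear moments of the coefficients as iterated integrals of bilinear moments of $X$ against fixed basis elements, using Fubini and the $L^2(\Omega\times\sud)$-convergence of the spectral expansion provided by the Stochastic Peter--Weyl theorem; conversely, each $X(g)$ is recovered as a convergent sum in the coefficients, so second-order moments transfer in both directions. Once this transfer is in place, both the strong and the $2$-weak cases reduce to the algebraic identity displayed above combined with Theorem~\ref{thm:VDgamma}.
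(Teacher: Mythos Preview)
Your proposal is correct and follows essentially the same route as the paper: compute how the coefficient vectors transform under left/right translation via the representation property of $D^\ell$ (the paper writes this compactly as $\xold{s}(g^{-1}z)=(\overline{D^\ell(g)}\aold{s})^T\fold{s}(z)$ and $\xowd{m}(zg)=(\fowd{m}(z))^TD^\ell(g)\aowd{m}$), and then invoke Theorem~\ref{thm:VDgamma}. Your additional care in justifying the transfer of $2$-weak invariance between $X$ and its coefficient vectors is a point the paper leaves implicit, but otherwise the arguments coincide.
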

\begin{proof}
We have for any $g,z\in\sud$:
\bega 
\xold{s}(g^{-1}z)=(\aold{s})^T\fold{s}(g^{-1}z)=(\aold{s})^TD^\ell(g^{-1})\fold{s}(z)=\left(\overline{D^\ell(g)}\aold{s}\right)^T\fold{s}.
\eega
Similarly, from the point of view of the right pull back action:
\bega
\xowd{m}(zg)=(\fowd{m}(z))^T\aowd{m}(zg)=(\fowd{m}(z))^TD^\ell(g)\aowd{m}(z),
\eega
because $\fowd{m}$ is the $m^{th}$ row of the matrix $\phi^\ell$. We conclude the proof by an application of Theorem \ref{thm:VDgamma}.
\end{proof}
We immediately get the following Corollary. Moreover, this proves the first part of Theorem \ref{thm:mainleft}.
\begin{cor}
Let $\gamma\in\sud$ be a random element distributed with the Haar measure.
A square integrable random field $X$ is, respectively, $2$-weakly or strongly left-invariant, if and only if 
\be 
X(\cdot)
\twik \text{ or } \law X(\gamma(\cdot) ).
\ee
It is, respectively, $2$-weakly or strongly right-invariant, if and only if 
\be 
X(\cdot)
\twik \text{ or } \law X((\cdot)\gamma ).
\ee
\end{cor}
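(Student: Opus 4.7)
My plan is to read the corollary as a direct packaging of Lemma \ref{lem:ainva} with Theorem \ref{thm:VDgamma}. Lemma \ref{lem:ainva} reduces (left- or right-) invariance of $X$ to $D$-invariance of a family of spectral vectors, and Theorem \ref{thm:VDgamma} rewrites that $D$-invariance as equality in law (respectively $2$-weak equivalence) with the random vector obtained by acting via $D^\ell(\gamma)$, where $\gamma$ is Haar-distributed and independent of $X$. The remaining work is simply to recognise that the random matrix action $D^\ell(\gamma)$ on the spectral vectors is exactly the pull-back action by $\gamma$ on $X$.

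For the left case, Lemma \ref{lem:ainva} and Theorem \ref{thm:VDgamma} together give the equivalence between left-invariance of $X$ and the identity $(\overline{\aold{s}})_{\ell,s}\law (D^\ell(\gamma)\overline{\aold{s}})_{\ell,s}$ (and similarly with $\twik$). To identify the right-hand side as a spectral signature of a pull-back of $X$, I re-use the computation already performed inside the proof of Lemma \ref{lem:ainva}: the identity $\xold{s}(g^{-1}z)=(\overline{D^\ell(g)}\aold{s})^T\fold{s}(z)$ shows that the spectral vector of $X(\gamma^{-1}\cdot)$ is $\overline{D^\ell(\gamma)}\aold{s}$, whose componentwise conjugate is $D^\ell(\gamma)\overline{\aold{s}}$. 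Since $\sud$ is compact, its Haar measure is invariant under inversion; combined with the independence of $\gamma$ from $X$, this yields $(X,\gamma)\law (X,\gamma^{-1})$, and applying the measurable map $(Y,g)\mapsto Y(g\cdot)$ gives $X(\gamma\cdot)\law X(\gamma^{-1}\cdot)$. Chaining these steps turns the $D$-invariance identity into $X\law X(\gamma\cdot)$, as claimed.

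The right-invariant case is cleaner because the companion computation in the proof of Lemma \ref{lem:ainva} gives $\xowd{m}(zg)=(\fowd{m}(z))^T D^\ell(g)\aowd{m}$ with no conjugation, so the spectral vector of $X((\cdot)g)$ is literally $D^\ell(g)\aowd{m}$; applying Theorem \ref{thm:VDgamma} to the $D$-invariant family $(\aowd{m})$ directly delivers $X\law X((\cdot)\gamma)$ (and the $\twik$ analogue). The only real obstacle is the bookkeeping on the left side: the conjugated vector $\overline{\aold{s}}$ appearing in \eqref{eq:dga} forces a conjugation in the intermediate identity, and passing from $D^\ell(\gamma)$ to the pull-back $X(\gamma\cdot)$ requires both unimodularity of $\sud$ and independence of $\gamma$ from $X$. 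Neither step is deep, but both must be tracked carefully, and the strong and $2$-weak versions come out in parallel since both Lemma \ref{lem:ainva} and Theorem \ref{thm:VDgamma} are stated simultaneously for $\law$ and $\twik$.
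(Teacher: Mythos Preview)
Your proposal is correct and follows exactly the route the paper intends: the paper states only that the corollary follows ``immediately'' from Lemma~\ref{lem:ainva}, and your argument is precisely the unpacking of that claim, using the spectral identities from the proof of Lemma~\ref{lem:ainva} to identify $D^\ell(\gamma)\overline{\aold{s}}$ (resp.\ $D^\ell(\gamma)\aowd{m}$) with the spectral coefficients of $X(\gamma^{-1}\cdot)$ (resp.\ $X((\cdot)\gamma)$), and then invoking unimodularity of $\sud$ to pass from $\gamma^{-1}$ to $\gamma$. The bookkeeping on conjugation and inversion is handled correctly, and the parallel treatment of the strong and $2$-weak cases is justified since both Lemma~\ref{lem:ainva} and Theorem~\ref{thm:VDgamma} are stated for both notions simultaneously.
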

\begin{thm}\label{thm:leacor}
Let $X\colon \sud \to \C$ be a $2$-weakly left-invariant random field. Then $\E\{a^\ell_{m,s}\}=0$ whenever $\ell\neq 0$. Moreover, the random variables $a^\ell_{m,s}$ have the following correlation structure:
\be\label{eq:leacor}
\E\left\{\overline{\left(a^\ell_{m,s}\right)} a^{\ell'}_{m',s'}\right\}=
\delta_{m,m'}
\frac{\delta_{\ell,\ell'}}{2\ell+1}\E\left\{
\langle \xold{s},\xold{s'}\rangle_{L^2(\sud)}
\right\}.
\ee
In particular, the variance of $a^\ell_{m,s}$ does not depend on $m$. Furthermore:
\be\label{eq:circsym}
\E\left\{a^\ell_{m,s}(a^{\ell'}_{m',s'})\right\}=
\delta_{-m,m'}(-1)^{\ell-m}\frac{\delta_{\ell,\ell'}}{2\ell+1}
\left(\sum_{k=-\ell}^{\ell}(-1)^{\ell+k}\E\left\{a^\ell_{k,s}( a^\ell_{-k,s'})\right\}\right).
\ee
\end{thm}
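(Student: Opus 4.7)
The plan is to recognize the statement as a direct specialization of the general $D$-invariance correlation formulas of Theorem \ref{thm:allDinvcor}, applied to the collection of spectral coefficient vectors $\overline{\aold{s}}$.

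First, I would invoke Lemma \ref{lem:ainva}: since $X$ is $2$-weakly left-invariant, the collection
\[
\mathcal{V} := \bigl(\,\overline{\aold{s}}\,\bigr)_{\ell \in \frac{1}{2}\N,\ s \in \{-\ell,\dots,\ell\}}
\]
of complex random vectors $\overline{\aold{s}} \in \C^{2\ell+1}$ is $2$-weakly $D$-invariant. The mean statement is then immediate: Theorem \ref{thm:allDinvcor} gives $\E\{\overline{\aold{s}}\}=0$ whenever $\ell\neq 0$, and taking conjugates yields $\E\{a^\ell_{m,s}\}=0$ for $\ell\neq 0$.

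Next, I would unfold the two identities of Theorem \ref{thm:allDinvcor} for this particular collection. For the first identity, the $(m,m')$-entry of $\E\{V_i\,\overline{V_j}^T\}$ is precisely $\E\{\overline{a^\ell_{m,s}}\,a^{\ell'}_{m',s'}\}$; the inner product $\overline{\langle V_i,V_j\rangle}$ expands to $\sum_k \overline{a^\ell_{k,s}}\,a^{\ell'}_{k,s'}$, which, when $\ell=\ell'$, equals $\langle \xold{s},\xold{s'}\rangle_{L^2(\sud)}$ by the orthonormality of the basis $\{\phi^\ell_{m,s}\}$ established in Proposition \ref{thm:Disharmonic}. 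Putting these together produces the first identity \eqref{eq:leacor}, and the independence of the variance on $m$ follows by setting $s=s'$ and $m=m'$.

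For the second identity I would proceed in the same fashion: the $(m,m')$-entry of $\E\{V_iV_j^T\}$ equals $\E\{\overline{a^\ell_{m,s}}\,\overline{a^{\ell'}_{m',s'}}\}$, the entries of $\e(\ell)$ give the factor $\delta_{-m,m'}(-1)^{\ell-m}$, and $\langle e_k,V_i\rangle = \overline{a^\ell_{k,s}}$. Taking the complex conjugate of the resulting identity produces \eqref{eq:circsym}. The only delicate point is the bookkeeping of conjugates, since the Hermitian pairing $\langle v,w\rangle = \overline{v}^T w$ used in Section \ref{sec:dinva} inserts conjugation on the first slot, whereas the natural spectral coefficients $a^\ell_{m,s}$ appear directly in the expansion of $X$; all the analytic content, in particular the Schur orthogonality computation of Proposition \ref{prop:Schurorth} and the $D$-invariance reduction of Theorem \ref{thm:VDgamma}, has already been carried out, so no new obstacle arises here.
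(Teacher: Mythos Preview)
Your proposal is correct and follows exactly the paper's route: the paper's entire proof is the single sentence that a combination of Lemma~\ref{lem:ainva} and Theorem~\ref{thm:allDinvcor} gives the thesis. Your detailed unpacking of the conjugate bookkeeping is accurate and simply makes explicit what the paper leaves to the reader.
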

\begin{proof}
A combination of Lemma \ref{lem:ainva} and Theorem \ref{thm:allDinvcor} gives the thesis. 
\end{proof}
By repeating the same arguments, we obtain an analogous statement for right-invariant random fields.
\begin{thm}\label{thm:riacor}
Let $X\colon \sud \to \C$ be a $2$-weakly right-invariant random field. Then $\E\{a^\ell_{m,s}\}=0$ whenever $\ell\neq 0$. Moreover, the random variables $a^\ell_{m,s}$ have the following correlation structure:
\be\label{eq:riacor}
\E\left\{\overline{\left(a^\ell_{m,s}\right)}a^{\ell'}_{m',s'}\right\}=
\delta_{s,s'}
\frac{\delta_{\ell,\ell'}}{2\ell+1}\E\left\{
\langle \xowd{m},\xowd{m'}\rangle_{L^2(\sud)}
\right\}.
\ee
In particular, the variance of $a^\ell_{m,s}$ does not depend on $s$. Furthermore:
\be\label{eq:ricircsym}
\E\left\{a^\ell_{m,s}(a^{\ell'}_{m',s'})\right\}=
\delta_{-s,s'}(-1)^{\ell-s}\frac{\delta_{\ell,\ell'}}{2\ell+1}
\left(\sum_{k=-\ell}^{\ell}(-1)^{\ell+k}\E\left\{a^\ell_{m,k}( a^\ell_{m',-k})\right\}\right).
\ee
\end{thm}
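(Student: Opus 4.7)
The plan is to mirror the proof of Theorem \ref{thm:leacor}, but using the right-invariance half of Lemma \ref{lem:ainva} rather than the left-invariance half. Concretely, I would first invoke Lemma \ref{lem:ainva} to assert that $2$-weak right-invariance of $X$ is equivalent to the collection of row vectors $\left(\aowd{m}\right)_{\ell,m}$ being $2$-weakly $D$-invariant, with the distributional identity $\aowd{m}\twik D^\ell(\gamma)\aowd{m}$ holding jointly. Note that in the right-invariant case Lemma \ref{lem:ainva} applies to $\aowd{m}$ directly (no conjugation), whereas the left-invariant case required conjugating $\aold{s}$, so I must take the row vectors as is.

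The vanishing of $\E\{a^\ell_{m,s}\}$ for $\ell\neq 0$ is then immediate from the first assertion of Theorem \ref{thm:allDinvcor} applied to $V_i=\aowd{m}$. For the correlation identities, I would apply Theorem \ref{thm:allDinvcor} to the pair $V_i=\aowd{m}\in\C^{2\ell+1}$, $V_j=\aowd{m'}\in\C^{2\ell'+1}$, whose components are now indexed by $s$ and $s'$ respectively. Reading the $(s,s')$ entry of the first matrix identity gives
\begin{equation}
\E\left\{\overline{a^\ell_{m,s}}a^{\ell'}_{m',s'}\right\}=\delta_{s,s'}\frac{\delta_{\ell,\ell'}}{2\ell+1}\E\bigl\{\overline{\langle \aowd{m'},\aowd{m}\rangle}\bigr\},
\end{equation}
while the $(s,s')$ entry of the second identity, using $\e(\ell)_{s,s'}=\delta_{-s,s'}(-1)^{\ell-s}$ and $\langle e_k,\aowd{m}\rangle=a^\ell_{m,k}$, yields \eqref{eq:ricircsym} verbatim.

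The only computation needed is to identify the factor $\E\{\overline{\langle \aowd{m'},\aowd{m}\rangle}\}=\E\{\langle \aowd{m},\aowd{m'}\rangle\}$ with $\E\{\langle \xowd{m},\xowd{m'}\rangle_{L^2(\sud)}\}$. This is a Parseval-style identity: since $\xowd{m}(g)=\sum_s a^\ell_{m,s}\phi^\ell_{m,s}(g)$ and the family $\{\phi^\ell_{m,s}\}$ is orthonormal in $L^2(\sud)$ by Proposition \ref{thm:Disharmonic}, one has
\begin{equation}
\langle \xowd{m},\xowd{m'}\rangle_{L^2(\sud)}=\sum_s \overline{a^\ell_{m,s}}\,a^\ell_{m',s}=\langle \aowd{m},\aowd{m'}\rangle,
\end{equation}
and applying $\E$ to both sides delivers the desired form of \eqref{eq:riacor}.

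No serious obstacle is expected; the statement is entirely parallel to Theorem \ref{thm:leacor} and its proof reduces to a careful transcription with the roles of row and column swapped. The only mild pitfalls are keeping track of which side Lemma \ref{lem:ainva} places the conjugation on (none here) and correctly interpreting the components of $\aowd{m}$ as indexed by $s$ when reading off the entries of $\e(\ell)$ and of the inner products appearing in Theorem \ref{thm:allDinvcor}.
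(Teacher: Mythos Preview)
Your proposal is correct and follows exactly the paper's approach: the paper proves Theorem \ref{thm:riacor} simply ``by repeating the same arguments'' used for Theorem \ref{thm:leacor}, namely combining Lemma \ref{lem:ainva} (right-invariant half, no conjugation on $\aowd{m}$) with Theorem \ref{thm:allDinvcor}. Your bookkeeping of the row/column swap and the reading of $\e(\ell)_{s,s'}$ is accurate.
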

\begin{thm}\label{thm:biacor}
Let $X\colon \sud \to \C$ be a $2$-weakly bi-invariant random field. Then $\E\{a^\ell_{m,s}\}=0$ whenever $\ell\neq 0$. Moreover, the random variables $a^\ell_{m,s}$ have the following correlation structure:
\be\label{eq:biacor}
\E\left\{\overline{\left(a^\ell_{m,s}\right)} a^{\ell'}_{m',s'}\right\}=
\delta_{m,m'}\frac{\delta_{\ell,\ell'}}{(2\ell+1)^2}\delta_{s,s'}\E\left\{
\| X^\ell\|^2_{L^2(\sud)}
\right\}.
\ee
In particular, the variance of $a^\ell_{m,s}$ depends only $\ell$. Furthermore:
\be\label{eq:bicircsym}
\E\left\{a^\ell_{m,s}(a^{\ell'}_{m',s'})\right\}=
\delta_{-m,m'}\frac{\delta_{\ell,\ell'}}{(2\ell+1)^2}
\delta_{-s,s'}(-1)^{\ell-m}(-1)^{\ell-s}
\E\left\{\langle \overline{X^\ell}, X^\ell \rangle_{L^2(\sud)}\right\}i^{2\ell}.
\ee
\end{thm}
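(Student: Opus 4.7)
The plan is to deduce Theorem \ref{thm:biacor} by intersecting the conclusions of Theorems \ref{thm:leacor} and \ref{thm:riacor}, since bi-invariance is exactly the conjunction of left- and right-invariance. The vanishing $\E\{a^\ell_{m,s\}}=0$ for $\ell\neq 0$ is immediate from either preceding theorem, so the real content is the identification of the correlation structure.

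For the sesquilinear identity \eqref{eq:biacor}, I would equate the two expressions produced by \eqref{eq:leacor} and \eqref{eq:riacor} for $\E\{\overline{a^\ell_{m,s}}\,a^{\ell'}_{m',s'}\}$: the former yields $\delta_{m,m'}$ times an $(s,s')$-dependent inner product, the latter $\delta_{s,s'}$ times an $(m,m')$-dependent one. Their equality forces both Kronecker symbols simultaneously; on the diagonal $m=m'$, $s=s'$ the common value must be independent of $s$ (from the left-invariance expression) and of $m$ (from the right-invariance expression), and summing $\sum_s \E\{\|\xold{s}\|^2\}=\E\{\|X^\ell\|^2\}$ pins it down as $\E\{\|X^\ell\|^2\}/(2\ell+1)^2$.

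For the bilinear identity \eqref{eq:bicircsym}, set $T^\ell_{m,s}:=\E\{a^\ell_{m,s}a^\ell_{-m,-s}\}$ and apply \eqref{eq:circsym} and \eqref{eq:ricircsym} at $\ell=\ell'$, $m'=-m$, $s'=-s$. One obtains simultaneously $T^\ell_{m,s}=(-1)^{\ell-m}B^\ell_s/(2\ell+1)=(-1)^{\ell-s}C^\ell_m/(2\ell+1)$ where $B^\ell_s=\sum_k(-1)^{\ell+k}T^\ell_{k,s}$ depends only on $s$ and $C^\ell_m=\sum_k(-1)^{\ell+k}T^\ell_{m,k}$ only on $m$. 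The compatibility equation $(-1)^{s-m}B^\ell_s=C^\ell_m$ separates variables, forcing $B^\ell_s=(-1)^s B^\ell_0$ and $C^\ell_m=(-1)^m B^\ell_0$, so $T^\ell_{m,s}$ is a single scalar times $(-1)^{\ell-m}(-1)^{\ell-s}$. That scalar is then determined by computing $\sum_{m,s}(-1)^{2\ell-m+s}T^\ell_{m,s}$; by Proposition \ref{prop:otherD} one has $\overline{\phi^\ell_{m,s}}=(-1)^{2\ell+m-s}\phi^\ell_{-m,-s}$, so this sum equals $\E\{\int X^\ell X^\ell\,d\mu\}=\E\{\langle \overline{X^\ell},X^\ell\rangle_{L^2(\sud)}\}$, identifying the scalar.

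The main obstacle is the careful sign bookkeeping, which splits into the integer- and half-integer-$\ell$ cases. For half-integer $\ell$, substituting $T^\ell_{k,s}=(-1)^{\ell-k}B^\ell_s/(2\ell+1)$ into the definition of $B^\ell_s$ yields the self-consistency $B^\ell_s=(-1)^{2\ell}B^\ell_s=-B^\ell_s$, forcing $B^\ell_s=T^\ell_{m,s}=0$ already from left-invariance alone; correspondingly $\E\{\langle \overline{X^\ell},X^\ell\rangle\}=0$ and \eqref{eq:bicircsym} holds trivially. For integer $\ell$, the non-trivial constant produced by the above calculation must be repackaged into the compact form $i^{2\ell}\E\{\langle \overline{X^\ell},X^\ell\rangle\}/(2\ell+1)^2$ quoted in the statement, which is the one subtle step requiring attention to the parity of $\ell$.
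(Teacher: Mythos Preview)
Your proposal is correct and follows essentially the same route as the paper: combine Theorems \ref{thm:leacor} and \ref{thm:riacor} to obtain \eqref{eq:biacor}, then use the two relations \eqref{eq:circsym}--\eqref{eq:ricircsym} to force $\E\{a^\ell_{m,s}a^\ell_{-m,-s}\}=C(-1)^{\ell-m}(-1)^{\ell-s}$ for some scalar $C$, and finally identify $C$ by expanding $\E\{\langle \overline{X^\ell},X^\ell\rangle\}$ via Proposition \ref{prop:otherD}. Your write-up is in fact more careful than the paper's on the separation-of-variables step and on the integer versus half-integer $\ell$ dichotomy (the paper simply asserts the existence of $C$ without splitting cases), but the underlying argument is the same.
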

\begin{proof}
Equation \eqref{eq:biacor} follows by the fact that $X$ is both left and right invariant, thus both Theorem \ref{thm:leacor} and Theorem \ref{thm:riacor} hold.
To prove the second equation, let us observe that, because of \eqref{eq:circsym} and \eqref{eq:ricircsym}, we have that there exists a constant $C\in\C$ such that:
\be 
\E\left\{a^{\ell}_{m,s}
a^{\ell}_{-m,-s}\right\}=C (-1)^{\ell-m}(-1)^{\ell-s}.
\ee
We find the value of $C$ with the following computation.
\bega
\E\left\{\left\langle \overline{X^\ell},X^\ell\right\rangle\right\}
=\E\left\{\left\langle \sum_{m,s}\overline{a^{\ell}_{m,s}\phi^\ell_{m,s}},\sum_{m',s'}\overline{a^{\ell}_{m',s'}\phi^\ell_{m',s'}}\right\rangle\right\}
=\dots
\eega
Using Proposition \ref{prop:otherD} this is
\bega
\dots =
\sum_{m,m',s,s'}
\E\left\{\left\langle \overline{a^{\ell}_{m,s}}
\phi^\ell_{-m,-s}(-1)^{2\ell+m-s},a^{\ell}_{m',s'}\phi^\ell_{m',s'}\right\rangle\right\}
=
\sum_{m,s}(-1)^{2\ell+m-s}
\E\left\{a^{\ell}_{m,s}
a^{\ell}_{-m,-s}\right\}
=
\dots
\eega
\bega
\dots
=
\sum_{m,s}(-1)^{2\ell+m-s}
C(-1)^{\ell-m}(-1)^{\ell-s}=C(2\ell+1)^2i^{2\ell}.
\eega
\end{proof}
\subsection{Spectral probability}\label{sec:spectralprob}
We use the Hilbert basis formed by the normalized Wigner functions, i.e. the spherical harmonics $\phi^\ell_{m,s}$, to define a notion of \emph{spin} for every $f\in L^2(\sud)$. 
\begin{defi}
Let $F\in L^2(\sud)$, and let $\|F\|$ denote its $L^2(\sud)$ norm. There exist coefficients $a_{m,s}^\ell\in \C$ such that
\be 
F=\sum_{\ell,m,s}a_{m,s}^\ell \phi^\ell_{s,m}.
\ee 
We define the \emph{spectral probability} of $F$ to be the probability measure $\TS{F}$ on the space $\hat{S}^3=\{(\ell,m,s)\}$ (defined in \eqref{eq:GdualG}) such that
\be
\TS{F}(\{(\ell,m,s)\}):=\frac{|a_{m,s}^\ell|^2}{\|F\|^2}.
 \ee
Similarly, we define the \emph{left spin}  and the \emph{right spin} of $F$ as the probability measures on $\frac12\Z$ such that for every singleton $m,s\in \frac{1}{2}\Z$, we have
\be\label{eq:LRspinmeasure}
\LS{F}(\{m\}):=\frac{\sum_{\ell,s}|a_{m,s}^\ell|^2}{\|F\|^2} \quad \text{and} 
\quad 
\RS{F}(\{s\}):=\frac{\sum_{\ell,m}|a_{m,s}^\ell|^2}{\|F\|^2}.
\ee
Moreover, we call \emph{bi spin} of $F$ , the probability on $\frac12\Z\times\frac12\Z$ defined for every singleton $(m,s)$ as
\be 
\BS{F}(\{(m,s)\}):=\sum_{\ell\in\frac12\N}\frac{|a_{m,s}^\ell|^2}{\|F\|^2}.
\ee
\end{defi}
In particular, a function $F$ has pure right spin $=-s$, and thus it is section of $\spi{s}$, if and only if $\RS{F}$ is the delta measure on $s$ (and similarly in the case of pure left spin).


Let us consider a square integrable random field, i.e. a random function $X\colon \sud\to \C$, such that $X\in L^2(\sud)$ almost surely, so that
\be 
X=\sum_{\ell,m,s}a^{\ell}_{m,s}\phi^\ell_{m,s}.
\ee
Then we define another associated spectral probability on $\hat{S}^3$:
\be
\TSE{X}(\{\ell,m,s\})=\frac{\E|a_{m,s}^\ell|^2}{\E\|X\|^2}.
\ee
This has to be compared with the expectation of the random probability $\TS{X}$, that is
\be
\E\TS{X}(\{\ell,m,s\})=\E\left(\frac{|a_{m,s}^\ell|^2}{\|X\|^2}\right). 
\ee
Similarly, we define $\LSE{X}$, $\RSE{X}$, and $\BSE{X}$.
Thus, we have $2$ probability measures on $\hat{S}^3$, that are associated to the random field $X$ and that give a sense of the distribution of the left and right spin of $X$ and of its homogeneous components, i.e of the relative magnitude of the random variables $a^\ell_{m,s}$

In general, the probabilities $\TSE{X}$ and $\E \TS{X}$ might be different, in that the first takes into account only the correlation structure of the variables $a^{\ell}_{m,s}$, i.e. it depends on the field $X$ up to $2$-weak equivalence. In fact, even less, it just depends on the marginal distributions of the coefficients. For this reason, we will call $\TSE{X}$, the \emph{weak  spectral probability} of $X$. On the other hand, the expected spectral probability $\E \TS{X}$ depends on higher moments and on the joint distribution, thus it should be considered as a more descriptive data, thus we call it \emph{strong spectral probability} of $X$.
\begin{thm}\label{thm:weakuniform}
Let $X$ be $2$-weakly left-invariant random field. Then, the weak spectral measure $\TSE{X}$ is uniform on the sets
\be\label{eq:setform}
\{\ell\}\times\{-\ell,\dots,\ell\}\times \{s\},
\ee
for all $\ell\in\frac12\N$.
If $X$ is strongly left-invariant, then the same is true for the strong spectral measure $\E\TS{X}$. The analogous statement is true when $X$ is right-invariant, in which case the measure is uniform on the sets of the form
\be 
\{\ell\}\times \{m\}\times \{-\ell,\dots ,\ell\}.
\ee
\end{thm}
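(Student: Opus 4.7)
The plan is to reduce both halves of the theorem to results already proved: the diagonal case of Theorem \ref{thm:leacor} handles the weak spectral measure directly, while for the strong case I would exploit the characterization (via the corollary to Lemma \ref{lem:ainva}) of strong left-invariance as equality in law under composition with an independent Haar-random element $\gamma\in\sud$, and then integrate $\gamma$ out using Schur's orthogonality.

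For the weak spectral probability, write $\TSE{X}(\{(\ell,m,s)\})=\E|a^\ell_{m,s}|^2/\E\|X\|^2$; it suffices to check that the numerator is independent of $m$ for each fixed $(\ell,s)$. This is immediate by specializing the identity \eqref{eq:leacor} to $\ell'=\ell$, $m'=m$, $s'=s$, which yields
\[
\E|a^\ell_{m,s}|^2 = \frac{1}{2\ell+1}\,\E\|\xold{s}\|^2_{L^2(\sud)},
\]
with no $m$-dependence on the right-hand side. Dividing by $\E\|X\|^2$ gives the first assertion.

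For the strong statement, I would start from $X\law X(\gamma\,\cdot)$ with $\gamma$ Haar-uniform and independent of $X$. Using $\phi^\ell_{m,s}(\gamma g)=\sum_k D^\ell_{m,k}(\gamma)\phi^\ell_{k,s}(g)$, one reads off that, jointly in $(\ell,m,s)$, the coefficients transform as $a^\ell_{m,s}\law (D^\ell(\gamma)^T\aold{s})_m$. Since each $D^\ell(\gamma)$ is unitary, the Parseval norm $\|X\|^2=\sum_{\ell,s}\|\aold{s}\|^2$ is $\gamma$-invariant (same value on both sides of $\law$), so
\[
\E\TS{X}(\{(\ell,m,s)\}) = \E\!\left(\frac{|a^\ell_{m,s}|^2}{\|X\|^2}\right) = \E\!\left(\frac{|e_m^T D^\ell(\gamma)^T\aold{s}|^2}{\|X\|^2}\right).
\]
Conditioning on $X$ (equivalently on the whole family of coefficients) and taking the $\gamma$-expectation first, the first identity of Theorem \ref{thm:Dcorr} gives
\[
\E_\gamma\bigl|e_m^T D^\ell(\gamma)^T\aold{s}\bigr|^2 = \sum_{k,k'} a^\ell_{k,s}\overline{a^\ell_{k',s}}\,\E_\gamma\bigl[D^\ell_{k,m}(\gamma)\overline{D^\ell_{k',m}(\gamma)}\bigr] = \frac{\|\aold{s}\|^2}{2\ell+1} = \frac{\|\xold{s}\|^2}{2\ell+1},
\]
which is independent of $m$. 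Taking the outer expectation concludes the strong case.

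The right-invariant analogue proceeds by exactly the symmetric argument, invoking the second half of Lemma \ref{lem:ainva} and Theorem \ref{thm:riacor} with $\aowd{m}$ replacing $\aold{s}$; this swaps the roles of the indices $m$ and $s$ and yields uniformity on sets $\{\ell\}\times\{m\}\times\{-\ell,\dots,\ell\}$. I do not foresee any substantive obstacle: the only delicate point is a careful bookkeeping of transpose/conjugation when translating ``$X\law X\circ\gamma$'' into the vector identity for $\aold{s}$, but this is handled by the explicit expansion above and by Schur's orthogonality, both already in hand.
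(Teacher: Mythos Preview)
Your proof is correct. For the weak spectral measure you proceed exactly as the paper does, specializing Theorem \ref{thm:leacor} to the diagonal. For the strong case, however, the paper takes a different and shorter route: it observes that the normalized field $Y=X/\|X\|$ is again strongly (hence $2$-weakly) left-invariant, since left translation preserves the $L^2$ norm, and that for $Y$ one has $\|Y\|=1$ almost surely, so $\TSE{Y}=\E\TS{Y}=\E\TS{X}$; the already-proved weak case applied to $Y$ then gives the strong statement in one line. Your approach---expanding via the corollary to Lemma \ref{lem:ainva} and integrating out $\gamma$ with Schur orthogonality (Theorem \ref{thm:Dcorr})---is equally valid and arguably more transparent, since it shows explicitly where the $m$-independence originates; the paper's normalization trick buys brevity and avoids redoing the Schur computation, at the cost of the small extra verification that $Y$ inherits the required invariance and integrability.
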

\begin{proof}
From Theorem \ref{thm:leacor} we see that the variance of $a^\ell_{m,s}$ does not depend on $m$, when $X$ is $2$-weakly left-invariant, which means exactly that $\TSE{X}(\ell,m,s)$ is uniform on sets of the form \eqref{eq:setform}
In case $X$ is strongly left-invariant, we have that the field 
\be 
Y=\frac{1}{\|X\|}X
\ee
 is again strongly left-invariant. Therefore we can apply the first part of the theorem to it, but in this case
 \be 
 \LSE{Y}=\E\LS{Y}=\E\LS{X}.
 \ee
\end{proof}
\begin{cor}
Let $X=\sum_{\ell,s} \xold{s}$ be $2$-weakly left-invariant random field. Then for all $\ell\in\frac12\N$ and $s\in\{-\ell,\dots \ell\}$, the measure $\LSE{\xold{s}}$ is the uniform probability on $\{-\ell,\dots,\ell\}$.
If $X$ is strongly left-invariant, then also $\E\LS{\xold{s}}=\LSE{\xold{s}}$ is uniform. The analogous statement is true when $X$ right-invariant.
\end{cor}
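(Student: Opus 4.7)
The plan is to read the corollary as the block-by-block specialization of Theorem \ref{thm:weakuniform}. For the weak part, assume $X$ is $2$-weakly left-invariant. Theorem \ref{thm:weakuniform} says that $\TSE{X}$ is uniform on the set $\{\ell\}\times\{-\ell,\dots,\ell\}\times\{s\}$, i.e.\ the variance $\E|a^\ell_{m,s}|^2$ depends on the triple $(\ell,m,s)$ only through $\ell$ and $s$. Unwinding the definition,
\be
\LSE{\xold{s}}(\{m\}) \;=\; \frac{\E|a^\ell_{m,s}|^2}{\sum_{m'=-\ell}^{\ell}\E|a^\ell_{m',s}|^2} \;=\; \frac{1}{2\ell+1},
\ee
which is the claimed uniform distribution on $\{-\ell,\dots,\ell\}$.

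For the strong part, assume $X$ is strongly left-invariant. By Theorem \ref{thm:mainleft} each $\xold{s}$ is then itself strongly left-invariant, so I would repeat the normalization trick used at the end of the proof of Theorem \ref{thm:weakuniform}. On the event $\{\|\xold{s}\|_{L^2(\sud)}>0\}$ define
\be
Z^\ell_s \;:=\; \frac{\xold{s}}{\|\xold{s}\|_{L^2(\sud)}}.
\ee
Since left multiplication is a Haar-measure-preserving isometry of $\sud$, the $L^2$ norm is invariant under the left pull-back action, so $Z^\ell_s$ inherits strong left-invariance from $\xold{s}$. As $\|Z^\ell_s\|=1$ almost surely, the weak and the expected strong spectral probabilities of $Z^\ell_s$ coincide, and applying the already established weak statement to $Z^\ell_s$ yields
\be
\E\LS{\xold{s}}(\{m\}) \;=\; \E\frac{|a^\ell_{m,s}|^2}{\|\xold{s}\|^2} \;=\; \LSE{Z^\ell_s}(\{m\}) \;=\; \frac{1}{2\ell+1}.
\ee
The right-invariant case is symmetric: replace $\xold{s}$ by $\xowd{m}$, the left spin by the right spin, and appeal to the right-invariant half of Theorem \ref{thm:weakuniform} together with the analogue of Theorem \ref{thm:mainleft} for right-invariance.

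No step is really an obstacle; the only care point is the degenerate event $\{\|\xold{s}\|_{L^2(\sud)}=0\}$ on which $\LS{\xold{s}}$ is not defined. If this event has positive probability one conditions on its complement, and if $\xold{s}$ vanishes almost surely then the statement of the corollary is vacuous for that block. Apart from this, everything reduces to the counting identity above and to the uniformity already provided by Theorem \ref{thm:weakuniform}.
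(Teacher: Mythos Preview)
Your proof is correct and follows exactly the route the paper intends: the paper states this corollary without proof, as an immediate consequence of Theorem \ref{thm:weakuniform}, and your argument is precisely the block-wise specialization of that theorem together with the normalization trick $Y=X/\|X\|$ used there, applied here to each $\xold{s}$ via its individual left-invariance (Theorem \ref{thm:mainleft}). Your handling of the degenerate event $\{\|\xold{s}\|=0\}$ is a reasonable technical caveat the paper simply ignores.
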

\begin{thm}
Any probability measure $\mu$ on $\{-\ell,\dots,\ell\}$ can be realized as $\mu=\E\RS{X^\ell}$, with $X^\ell$ being a strongly left-invariant random field.
\end{thm}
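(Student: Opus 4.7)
The plan is to give an explicit construction built from a single Haar-distributed random element $\gamma\in\sud$. Write $\mu(\{s\})=p_s$ for $s\in\{-\ell,\dots,\ell\}$, with $\sum_s p_s=1$. Pick any collection of fixed deterministic vectors $v_s\in\C^{2\ell+1}$ with $\|v_s\|^2=p_s$ (for instance, $v_s=\sqrt{p_s}\,e_0$ would do; the particular direction is irrelevant). Define random vectors
\[
\overline{a^\ell_{\bullet,s}}:=D^\ell(\gamma)\,v_s\qquad\text{for every }s\in\{-\ell,\dots,\ell\},
\]
and set
\[
X^\ell:=\sum_{s=-\ell}^{\ell}\sum_{m=-\ell}^{\ell}a^\ell_{m,s}\,\phi^\ell_{m,s}.
\]

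The first step is to verify strong left-invariance. For any $g\in\sud$, the collection $(D^\ell(g)\,\overline{a^\ell_{\bullet,s}})_s=(D^\ell(g\gamma)\,v_s)_s$ has the same joint law as $(D^\ell(\gamma)\,v_s)_s=(\overline{a^\ell_{\bullet,s}})_s$, because the Haar measure on $\sud$ is left-invariant. Hence the collection $(\overline{a^\ell_{\bullet,s}})_s$ is strongly $D$-invariant in the sense of Section~\ref{sec:dinva}, and Lemma~\ref{lem:ainva} guarantees that $X^\ell$ is strongly left-invariant.

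The second step is to compute the spin distribution. Because $\{\phi^\ell_{m,s}\}_{m,s}$ is orthonormal in $L^2(\sud)$ and the spaces $\coldsp{s}$ are pairwise orthogonal, we have
\[
\|\xold{s}\|^2_{L^2(\sud)}=\sum_{m}|a^\ell_{m,s}|^2=\|\overline{a^\ell_{\bullet,s}}\|^2=\|D^\ell(\gamma)v_s\|^2=\|v_s\|^2=p_s,
\]
where the penultimate equality uses that $D^\ell(\gamma)\in U(2\ell+1)$. In particular the norms $\|\xold{s}\|$ and $\|X^\ell\|^2=\sum_s p_s=1$ are deterministic. Consequently
\[
\E\RS{X^\ell}(\{s\})=\E\left\{\frac{\|\xold{s}\|^2}{\|X^\ell\|^2}\right\}=p_s=\mu(\{s\}),
\]
which is exactly the required identity. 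No step is a real obstacle: the only subtle point is recognizing that because $D^\ell(\gamma)$ is unitary, the individual norms $\|\xold{s}\|$ are not random at all, so the random spectral probability $\RS{X^\ell}$ coincides almost surely with $\mu$ — realizing $\mu$ as an expectation is therefore automatic.
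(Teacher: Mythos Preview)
Your proof is correct and takes essentially the same route as the paper: both construct the coefficient vectors by applying $D^\ell(\gamma)$ (for a single Haar element $\gamma$) to fixed vectors of prescribed norm---the paper takes specifically $v_s=\sqrt{p_s}\,e_s$, you allow any $v_s$ with $\|v_s\|^2=p_s$---and then exploit unitarity of $D^\ell(\gamma)$ to see that each $\|\xold{s}\|$ is deterministic, so that $\RS{X^\ell}=\mu$ almost surely. One trivial caveat: your illustrative choice $v_s=\sqrt{p_s}\,e_0$ only makes sense when $\ell\in\N$, since for half-integer $\ell$ the index set $\{-\ell,\dots,\ell\}$ does not contain $0$; taking $e_{-\ell}$ instead fixes this and nothing else changes.
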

\begin{proof}
Let $\gamma_{-\ell},\dots,\gamma_\ell\in\sud$ be independent uniform random elements of $\sud$. Define \be 
X(g):=\sum_{s}\mu(\{s\})^{\frac12}\phi^\ell_{s,s}(\gamma\cdot g)=\sum_{s}\mu(\{s\})^{\frac12}(\rowd{s}(\gamma))^T\fold{s}( g).
\ee
$X$ is strongly left-invariant by construction. Moreover, the coefficients are $a^\ell_{m,s}=\mu(\{s\})^{\frac12}D^\ell_{s,m}(\gamma)$, so that the random vectors $\aold{s}$ are orthogonal almost surely and have constant length, due to the unitarity of $D^\ell(g)$, for any $g\in\sud$. It follows that $\|X\|^2_{L^2(\sud)}=C$ is constant almost surely. Therefore, the strong and weak right spin measures are equal to $\mu$: 
\be 
\E\RS{X}(\{s\})=\sum_{m}\E\left(\mu(\{s\})|D^\ell_{s,m}(\gamma)|^2\right)=\mu(\{s\}).
\ee
\end{proof}
This concludes the proof of Theorem \ref{thm:mainleft}. The following result implies Theorem \ref{thm:main2}.
\begin{thm}\label{thm:stronguniform}
Let $X$ be a $2$-weakly bi-invariant random field. Then the weak spectral probability $\TSE{X}$ is uniform on all sets of the form
\be 
\{\ell\}\times \{-\ell,\dots ,\ell\}\times \{-\ell,\dots ,\ell\}.
\ee
If $X$ is strongly left-invariant, then the same is true for the strong spectral probability $\E\TS{X}$.
\end{thm}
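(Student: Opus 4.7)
The plan is to read off both statements directly from Theorem \ref{thm:biacor}, using the normalization trick already employed in the proof of Theorem \ref{thm:weakuniform}.

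First I would unpack the definitions: by construction,
\[
\TSE{X}(\{(\ell,m,s)\}) = \frac{\E|a^\ell_{m,s}|^2}{\E\|X\|^2}.
\]
The key input is the $s=s'$, $m=m'$, $\ell=\ell'$ case of the identity \eqref{eq:biacor} in Theorem \ref{thm:biacor}, which gives
\[
\E|a^\ell_{m,s}|^2 \;=\; \frac{1}{(2\ell+1)^2}\,\E\bigl\{\|X^\ell\|^2_{L^2(\sud)}\bigr\}.
\]
The right-hand side depends on $\ell$ alone. Since $\E\|X\|^2 = \sum_{\ell}\E\|X^\ell\|^2$ is a constant, this shows that for each fixed $\ell$ the function $(m,s)\mapsto \TSE{X}(\{(\ell,m,s)\})$ is constant on $\{-\ell,\dots,\ell\}^2$. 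This settles the first assertion.

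For the second assertion, assume $X$ is strongly bi-invariant. I would apply the standard normalization trick: set
\[
Y := \frac{1}{\|X\|}\,X,
\]
which is well defined almost surely (a $2$-weakly bi-invariant field is square integrable almost surely by the Stochastic Peter--Weyl theorem, and $\|X\|\neq 0$ outside a negligible set unless $X\equiv 0$, which is trivial). Since $\|\cdot\|_{L^2(\sud)}$ is invariant under both the left and the right pull-back action, the field $Y$ is again strongly bi-invariant. Writing $b^\ell_{m,s}$ for the Fourier coefficients of $Y$, one has $b^\ell_{m,s} = a^\ell_{m,s}/\|X\|$, and therefore
\[
\TSE{Y}(\{(\ell,m,s)\}) = \frac{\E|b^\ell_{m,s}|^2}{\E\|Y\|^2} = \E\!\left\{\frac{|a^\ell_{m,s}|^2}{\|X\|^2}\right\} = \E\TS{X}(\{(\ell,m,s)\}),
\]
because $\|Y\|\equiv 1$. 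The first part of the theorem, applied to $Y$, shows that $\TSE{Y}$ is uniform on the sets $\{\ell\}\times\{-\ell,\dots,\ell\}\times\{-\ell,\dots,\ell\}$, and the identity above transfers this uniformity to $\E\TS{X}$.

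The only place that requires care is checking that Theorem \ref{thm:biacor} is truly applicable to $Y$, i.e.\ that $Y$ is in fact $2$-weakly bi-invariant (strong invariance of $X$ and bi-invariance of the $L^2$ norm immediately imply this, provided $Y$ is square integrable, which it is since $\|Y\|=1$ a.s.). No further obstacle arises: the whole argument is essentially a reprise of the proof of Theorem \ref{thm:weakuniform}, with the left-invariant statement replaced by the stronger bi-invariant one.
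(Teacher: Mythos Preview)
Your proof is correct and follows the same approach as the paper, which simply says the result ``follows from Theorem \ref{thm:biacor} and the same argument used in the proof of the previous theorem.'' You have spelled out that argument in full: the diagonal case of \eqref{eq:biacor} gives $\E|a^\ell_{m,s}|^2=(2\ell+1)^{-2}\E\|X^\ell\|^2$, and the normalization $Y=X/\|X\|$ transfers the weak statement to the strong one. You also (correctly) read the hypothesis in the second part as strong \emph{bi}-invariance rather than the stated ``strongly left-invariant,'' which is evidently a typo carried over from the parallel statement in Theorem \ref{thm:weakuniform}; without bi-invariance of $X$ the field $Y$ need not be $2$-weakly bi-invariant and the first part would not apply.
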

\begin{proof}
Follows from Theorem \ref{thm:biacor} and the same argument used in the proof of the previous theorem.
\end{proof}
\begin{cor}\label{cor:stronguniform}
Let $X=\sum_{\ell} X^\ell$ be a $2$-weakly bi-invariant random field. Then for all $\ell\in\frac12\N$ and $m,s\in\{-\ell,\dots \ell\}^2$, the measure $\BSE{X^\ell}$ is the uniform probability on $\{-\ell,\dots,\ell\}$.
If $X$ is strongly left-invariant, then $\E\BS{\xold{s}}=\BSE{\xold{s}}$.
\end{cor}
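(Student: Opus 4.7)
The plan is to deduce both assertions from Theorem \ref{thm:stronguniform} applied to the components, after checking that each component inherits the relevant invariance, and then use a normalization/averaging trick based on Lemma \ref{lem:ainva} for the strong part. First I would verify that 2-weak bi-invariance of $X$ descends to $X^\ell$, and strong left-invariance of $X$ descends to $\xold{s}$: the orthogonal projection $L^2(\sud)\to\matrsp$ commutes with the entire $SO(4)$ pull-back action because $\matrsp$ is $SO(4)$-invariant by Proposition \ref{prop:dirrep}, and the projection $L^2(\sud)\to\coldsp{s}$ commutes with the left pull-back action because $\coldsp{s}$ is an invariant subspace for $\sud\times 1$.

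For the uniformity of $\BSE{X^\ell}$, Theorem \ref{thm:biacor} applied to $X^\ell$ gives
\[
\E|a^\ell_{m,s}|^2=\frac{1}{(2\ell+1)^2}\E\|X^\ell\|^2
\]
for every $(m,s)\in\{-\ell,\dots,\ell\}^2$. Dividing by $\E\|X^\ell\|^2$ and invoking the definition of $\BSE{\cdot}$ immediately yields $\BSE{X^\ell}(\{(m,s)\})=1/(2\ell+1)^2$; this is the uniform probability on the Cartesian square $\{-\ell,\dots,\ell\}^2$ on which $\BS{\cdot}$ is defined (which is how I read the statement, given that $\BS{\cdot}$ takes values on $\frac12\Z\times\frac12\Z$).

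For the identity $\E\BS{\xold{s}}=\BSE{\xold{s}}$ under strong left-invariance, I would compare the two measures singleton by singleton. Both are supported on $\{-\ell,\dots,\ell\}\times\{s\}$ because $\xold{s}\in\coldsp{s}$ has pure right spin $-s$. The weak side is handled by the first assertion (applied via Theorem \ref{thm:leacor} directly to $\xold{s}$, which is $2$-weakly left-invariant), giving $\BSE{\xold{s}}(\{(m,s)\})=1/(2\ell+1)$. For the strong side,
\[
\E\BS{\xold{s}}(\{(m,s)\})=\E\!\left(\frac{|a^\ell_{m,s}|^2}{\|\xold{s}\|^2}\right).
\]
By Lemma \ref{lem:ainva}, strong left-invariance yields $\aold{s}\law \overline{D^\ell(\gamma)}\aold{s}$ with $\gamma$ uniform in $\sud$ and independent of $\aold{s}$. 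Since the integrand $v\mapsto |v_m|^2/\|v\|^2$ is scale-invariant and unitarily insensitive to the norm in the denominator, I can replace $\aold{s}$ by $\overline{D^\ell(\gamma)}\aold{s}$, condition on $\aold{s}$, and evaluate the inner integral over $\gamma$ using Proposition \ref{prop:Schurorth} (equivalently the first identity of Theorem \ref{thm:Dcorr}): for any fixed $v\neq 0$,
\[
\E_\gamma |(\overline{D^\ell(\gamma)}v)_m|^2=\frac{\|v\|^2}{2\ell+1}.
\]
The outer expectation then collapses to $1/(2\ell+1)$, matching the weak side.

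The main obstacle I anticipate is the measure-theoretic bookkeeping on the null event $\{\xold{s}=0\}$, where the quotient defining $\BS{\xold{s}}$ is ill-posed; this should be resolved by restricting to its complement and observing that strong $D$-invariance forces $\{\xold{s}=0\}$ to be either trivial or all of $\Omega$, in which latter case both sides of the claimed identity vanish by convention. A secondary delicate point is justifying that the projection $X\mapsto \xold{s}$ preserves strong (not just $2$-weak) left-invariance, which follows because the projection is a bounded equivariant linear operator and strong invariance is preserved under equivariant measurable maps of the underlying field.
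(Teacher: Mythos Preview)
Your proof is correct and follows essentially the same route as the paper. The paper gives no separate proof for this corollary, treating it as an immediate consequence of Theorem~\ref{thm:stronguniform}, whose proof in turn cites Theorem~\ref{thm:biacor} together with the normalization trick $Y=X/\|X\|$ from the proof of Theorem~\ref{thm:weakuniform}. Your argument for the first part is exactly this, and for the second part your direct computation via Lemma~\ref{lem:ainva} and Schur orthogonality is simply the normalization trick unpacked: once $\aold{s}\law \overline{D^\ell(\gamma)}\,\aold{s}$ and the denominator $\|\aold{s}\|$ is unitarily invariant, the two presentations coincide. Your remarks on the null event $\{\xold{s}=0\}$ and on equivariance of the projection $X\mapsto \xold{s}$ are the right bookkeeping points and cause no trouble.
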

\subsection{The Gaussian case}
Let us turn our focus to everyone's favorite random fields, the Gaussian ones. We say that $V\in\R^N$ is a real Gaussian random vector if all linear combinations of its components are Gaussian random variables. For simplicity, we will only consider the case of centered Gaussian.
We say that a complex random variable $a\in\C$ is complex Gaussian if and only if, it is a circularly symmetric Gaussian random vector in $\R^2$, i.e. there exists $\sigma\in\R$ and $\xi_1,\xi_2\sim N(0,\sigma^2)$ independent such that
\be 
a=\sigma\left(\frac{1}{\sqrt{2}}\xi_1+i\frac{1}{\sqrt{2}}\xi_2\right),
\ee 
in this case we write $a\sim N_\C(0,\sigma)$. We say that $V\in\C^N$ is a complex Gaussian random vector if all $\C$-linear combinations of its components are complex Gaussian. 
The distribution of a real Gaussian random vector $V\in \C^N=\R^{2N}$ is determined by the correlation matrices
\be 
K:=\E\{V\overline{V}^T\} \quad\text{ and }\quad C:=\E\{VV^T\}.
\ee
A real Gaussian random vector is complex Gaussian if and only if $C=0$
and we write $V\sim N_\C(0,K)$. 
\begin{defi}
A random field $X\colon \sud\to\C$ is Gaussian (complex or real) if for every finite set of points $g_i\in\sud$, the random vector is Gaussian (complex or real)
\be 
(X(g_1),\dots,X(g_N))\in \C^N.
\ee
\end{defi}
It is straightforward to see that an almost surely square integrable random field $X\colon \sud \to \C$ is Gaussian (complex or real) if and only if the coefficients $a^\ell_{m,s}$ of the decomposition
\be 
X=\sum_{\ell,m,s}a^\ell_{m,s}\phi^\ell_{m,s}
\ee
 are a family of jointly Gaussian (complex or real) random variables\footnote{Clearly if $a^\ell_{m,s}$ are jointly Gaussian, then $X$ is a Gaussian random field. The converse, follows from the fact that 
 \be 
 a^\ell_{m,s}=\int_{\sud}X(g)\overline{\phi^\ell_{m,s}}(g)d\mu(g),
 \ee
 hence $a^\ell_{m,s}$ can be expressed the almost sure limit of a sequence of linear combination of random variables of the form $X(g)$, therefore it is Gaussian.}. 
In particular, if $X$ is complex Gaussian, then 
 \be 
\E\{a^\ell_{m,s}a^{\ell'}_{m',s'}\} =0,
 \ee
 for every $m,m',s,s',\ell,\ell'\in\frac12\Z$. 
\begin{thm}\label{thm:gazzo}
A complex Gaussian random field $X\colon \sud \to \C$ is $2$-weakly (left, right or bi)-invariant if and only if it is strongly (left,right or bi)-invariant. In particular, it is left-invariant if and only if the fields $X^\ell$ are independent with
\be\label{eq:gavr} 
\aold{s}\sim N\left(0,\sigma(\ell,s)^2\mathbb{1}_{(2\ell+1)}\right),
\ee
for some $\sigma(\ell,s)\in \R$
and there are constants $K(s,s')\in\C$ such that
\be \label{eq:basta}
\E\{a^\ell_{\bullet,s}(\overline{a^\ell_{\bullet,s'})^T}\}=K(s,s')\mathbb{1}_{(2\ell+1)}.
\ee
 The analogous statement holds if $X$ is right-invariant. Moreover, $X$ is bi-invariant if and only if all the variables $a^\ell_{m,s}$ form an independent family and equation \eqref{eq:gavr} holds with $\sigma(\ell,s)=\sigma(\ell)$.
\end{thm}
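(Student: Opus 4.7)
The plan hinges on the familiar fact that the law of a centered complex Gaussian field on $\sud$ is determined by its Hermitian covariance kernel $(g_1,g_2)\mapsto\E\{\overline{X(g_1)}X(g_2)\}$ together with the pseudo-covariance $(g_1,g_2)\mapsto\E\{X(g_1)X(g_2)\}$, and that the assumption ``complex Gaussian'' forces the latter to vanish identically. Consequently everything in the theorem reduces to reading off what Theorems~\ref{thm:leacor}, \ref{thm:riacor} and \ref{thm:biacor} say about the covariance of the coefficients $a^\ell_{m,s}$ and repackaging it in distributional terms.

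First I would establish the equivalence between $2$-weak and strong invariance. Since $X$ is centered complex Gaussian, for every isometry $\phi$ of the relevant type ($L_g$, $R_g$, or $L_g\circ R_{g'}$) the field $X\circ\phi$ is again centered complex Gaussian with vanishing pseudo-covariance, and its law is therefore determined by its Hermitian covariance alone. The condition of $2$-weak invariance is exactly that the Hermitian covariance matches that of $X$; hence $X\circ\phi\law X$, which is strong invariance. The converse is trivial.

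For the left-invariant case, Theorem~\ref{thm:leacor} describes completely the Hermitian correlation structure of $(a^\ell_{m,s})$ under $2$-weak left-invariance: off the diagonal $\ell=\ell'$, $m=m'$ everything vanishes, and on this diagonal the value is $K(s,s')=\frac{1}{2\ell+1}\E\langle \xold{s},\xold{s'}\rangle_{L^2(\sud)}$. Setting $\sigma(\ell,s)^2:=K(s,s)$ yields $\aold{s}\sim N_\C\bigl(0,\sigma(\ell,s)^2\mathbb{1}_{(2\ell+1)}\bigr)$ and the identity \eqref{eq:basta}. Furthermore the coefficients belonging to different $X^\ell$ are uncorrelated, and since the full family $(a^\ell_{m,s})$ is jointly complex Gaussian, uncorrelatedness forces independence; so the fields $X^\ell$ are independent. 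Conversely, if \eqref{eq:gavr}-\eqref{eq:basta} hold and $\E\{a^\ell_{m,s}a^{\ell'}_{m',s'}\}=0$, a direct verification shows that the covariance matches the one dictated by Theorem~\ref{thm:leacor} for a $2$-weakly left-invariant field, and the first paragraph upgrades this to strong left-invariance. The right-invariant case is handled identically via Theorem~\ref{thm:riacor}.

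Finally, for bi-invariance, Theorem~\ref{thm:biacor} gives
\[
\E\{\overline{a^\ell_{m,s}}\,a^{\ell'}_{m',s'}\}
=\delta_{\ell,\ell'}\delta_{m,m'}\delta_{s,s'}\,\frac{\E\|X^\ell\|^2_{L^2(\sud)}}{(2\ell+1)^2},
\]
so all the $a^\ell_{m,s}$ are pairwise uncorrelated centered complex Gaussians with variance $\sigma(\ell)^2:=\E\|X^\ell\|^2_{L^2(\sud)}/(2\ell+1)^2$ depending only on $\ell$; jointly Gaussian and pairwise uncorrelated yields an independent family. Conversely, such an independent family satisfies both equations of Theorem~\ref{thm:biacor} (the second because independence and complex Gaussianity together give $\E\{a^\ell_{m,s}a^{\ell'}_{m',s'}\}=0$), so $X$ is $2$-weakly bi-invariant, and the first paragraph promotes this to strong bi-invariance. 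I do not foresee any real obstacle: the argument is essentially a bookkeeping exercise combining the covariance classifications already proved with the two Gaussian principles that (i) the law is fixed by first and second moments and (ii) uncorrelated jointly complex Gaussian variables are independent.
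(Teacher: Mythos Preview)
Your proposal is correct and follows essentially the same line as the paper's proof: both reduce the equivalence of $2$-weak and strong invariance to the fact that a centered complex Gaussian law is determined by its second moments, and then read off the covariance structure from Theorems~\ref{thm:leacor}, \ref{thm:riacor}, \ref{thm:biacor}, using that uncorrelated jointly Gaussian variables are independent. The only cosmetic difference is that the paper routes the first step through Lemma~\ref{lem:ainva} (passing to $D$-invariance of the coefficient vectors), whereas you argue directly with the covariance kernel of the field; the content is the same.
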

\begin{proof}
The first statement follows from Lemma \ref{lem:ainva} and the fact that the distribution of a Gaussian vector is uniquely determined by the correlation structure of the variables $a^\ell_{m,s}$. By a further examination of the characterization given in Theorems \ref{thm:leacor} and Theorem \ref{thm:biacor} we complete the proof. 
\end{proof}
In \cite{BalTra,bamavara} it is proved that moreover the only left-invariant random fields for which the variables $a^\ell_{m,s}$ are independent are the Gaussian ones. This, combined with Theorem \ref{thm:gazzo} and Theorem \ref{thm:main1}, proves Corollary \ref{cor:main3}.
\bibliographystyle{plain}
\bibliography{IsotropyVSIsotropy}

\end{document}